\newlength{\IdentationTDConditions}
\newlength{\IdentationTDExtraCondition}
\renewcommand{\leIf}[3]{\KwSty{If} #1 \KwSty{then} #2 \KwSty{else} #3\;}
\newcommand{\menge}[1]{\mathbb{#1}}
\newcommand{\N}{\menge{N}}
\newcommand{\cupdot}{\ \ensuremath{{\mathaccent\cdot\cup}}\ }
\newcommand{\diam}{\operatorname{diam}}
\newcommand{\bigO}{\operatorname{\mathcal{O}}}
\newcommand{\mb}{\operatorname{MinBis}}
\newcommand{\X}{\mathcal{X}}
\newcommand{\Bceil}[1]{\left\lceil#1\right\rceil}
\newcommand{\Bfloor}[1]{\left\lfloor#1\right\rfloor}
\newcommand{\tw}{\operatorname{tw}}
\newcommand{\notEx}{\operatorname{null}}
\newcommand{\larr}{\leftarrow}
\newcommand{\superscript}[1]{\ensuremath{^{\textrm{#1}}}}
\renewcommand{\th}[0]{\superscript{th}}
\newtheorem{thm}{Theorem}
\newtheorem{lemma}[thm]{Lemma} 
\newtheorem{cor}[thm]{Corollary} 
\newtheorem{prop}[thm]{Proposition}
\newtheorem{defi}[thm]{Definition}
\providerobustcmd*{\bigcupdot}{%
  \mathop{%
    \mathpalette\bigop@dot\bigcup
  }%
}
\newrobustcmd*{\bigop@dot}[2]{%
  \setbox0=\hbox{\(\m@th#1#2\)}%
  \vbox{%
    \lineskiplimit=\maxdimen
    \lineskip=-0.7\dimexpr\ht0+\dp0\relax
    \ialign{%
      \hfil##\hfil\cr
      \(\m@th\cdot\)\cr
      \box0\cr
    }%
  }%
}
\begin{document}

\title[Minimum Bisection and Related Cut Problems]{On Minimum Bisection and Related Cut Problems \\in Trees and Tree-Like Graphs} 

\author{Cristina G.\ Fernandes}
\address[Cristina~G.~Fernandes]{Instituto de Matem\'atica e Estat\'{\i}stica \\
  Universidade de S\~ao Paulo\\
  Rua do Mat\~ao~1010, 05508--090\\
  S\~ao Paulo\\
  Brazil}
\email{cris@ime.usp.br}
\thanks{The first author was partially supported by CNPq Proc.~308523/2012-1 and 477203/2012-4, FAPESP 2013/\mbox{03447-6}, and Project MaCLinC of NUMEC/USP}
 
\author{Tina Janne Schmidt}
\author{Anusch Taraz}
\address[Tina~Janne~Schmidt and Anusch~Taraz]{Institut f\"ur Mathematik\\
  TU Hamburg\\
  Am Schwarzenberg-Campus~3E\\
  21073 Hamburg\\
  Germany}
\email{tina.janne.schmidt@tuhh.de \textnormal{and} taraz@tuhh.de}
\thanks{The second author gratefully acknowledges the support by the Evangelische Studienwerk Villigst e.V.\\ 
The research of the three authors was supported by a PROBRAL CAPES/DAAD Proc.~430/15 (February 2015 to December 2016, DAAD Projekt-ID 57143515). \\
Some of the results that are proven in this work have already been announced in extended abstracts at EuroComb2013 and EuroComb2015 \cite{EuroComb2013, EuroComb2015}}


\date{\today}

\subjclass[2010]{05C05, 05C85, 90C27}

\keywords{Minimum Bisection, Tree, Tree Decomposition, Linear-Time Algorithm}


\maketitle          
\thispagestyle{empty}


\begin{quote}
\footnotesize
\textsc{Abstract.} Minimum Bisection denotes the NP-hard problem to partition the vertex set of a graph into two sets of equal sizes while minimizing the width of the bisection, which is defined as the number of edges between these two sets. 
We first consider this problem for trees and prove that the minimum bisection width of every tree~\(T\) on~\(n\) vertices satisfies~\({\mb(T) \leq 8n\Delta(T) / \diam(T)}\). 
Second, we generalize this to arbitrary graphs with a given tree decomposition~\( (T,\X)\) and give an upper bound on the minimum bisection width that depends on the structure of~\( (T,\X)\). 
Moreover, we show that a bisection satisfying our general bound can be computed in time proportional to the encoding length of the tree decomposition when the latter is provided as input. 
\end{quote}

\section{Introduction}
\label{secIntroduction}

\subsection{The Minimum Bisection Problem}
\label{subsecMinBisProb}

A \emph{bisection}~\((B,W)\) in a graph~\(G=(V,E)\) is a partition of its vertex set into two sets~\(B\) and~\(W\), called the black and the white set, of sizes differing by at most one. An edge~\(\{x,y\}\) of~\(G\) is \emph{cut} by the bisection~\( (B,W)\) if~\(x\in B\) and~\(y\in W\) or vice versa. The number of edges cut by the bisection~\( (B,W)\) is called the \emph{width} of the bisection and is denoted by~\(e_G(B,W)\). The \emph{minimum bisection width} of~\(G\) is defined as
\[ \mb(G) := \min \{ e_G(B,W) : (B,W) \text{ is a bisection in~\(G\)} \}\] 
and a bisection of width~\(\mb(G)\) in~\(G\) is called a \emph{minimum bisection} in~\(G\). Determining a minimum bisection is a well-known optimization problem that is -- unlike the Minimum Cut Problem -- known to be NP-hard~\cite{GareyJohnsonStockmeyer}. Jansen et al.~\cite{JansenKarpinski} developed a dynamic programming algorithm that computes a minimum bisection in~\( \bigO ( 2^t n^3 )\) time for an arbitrary graph on~\(n\) vertices, when a tree decomposition of width~\(t\) is provided as input. Thus, the problem becomes polynomially tractable for graphs of constant tree-width. Furthermore, Cygan et al.~\cite{CyganLokshtanov} showed that the Minimum Bisection Problem is fixed parameter tractable. Currently, the best known approximation algorithm is due to R\"acke~\cite{Raecke} and achieves an approximation ratio of~\(\bigO (\log n)\) for arbitrary graphs on~\(n\) vertices. For planar graphs, no better approximation ratio is known and it is open whether the Minimum Bisection Problem remains NP-hard when restricted to planar graphs. When the minimum degree of the considered graph is linear, a PTAS for finding a minimum bisection is known~\cite{AroraPTAS}. 
On the other hand, the Minimum Bisection Problem restricted to \mbox{3-regular} graphs is as hard to approximate as its general version~\cite{BermanKarpinski}.  
In the following, we therefore limit our attention to graphs that have small (in fact, constant) maximum degree.

\subsection{Results for Trees}
\label{subsecResultsTrees}

If~\(T\) is a tree on~\(n\) vertices and maximum degree~\(\Delta\), then owing to the existence of a \emph{separating vertex} (i.e., a vertex whose removal leaves no connected component of size greater than~\(n/2\)), we always have \(\mb(T) \leq \Delta \cdot \log_2 n\) and 
it is easy to see that a bisection satisfying this bound can be computed in~\( \bigO(n)\) time. 
Furthermore, the bound is tight up to a constant factor, as the example of a perfect ternary tree~\(T_h\) of height~\(h\) 
with~\(\mb(T_h) \geq h- \log_3 h \) shows (see e.g.~Theorem~7.6 and Theorem~4.11 in~\cite{MScThesisTina}). 

Here, one of our aims is to investigate the structure of bounded-degree trees that have large minimum bisection width. To do so, our first result establishes the following inequality, where the \emph{relative diameter}~\(\diam^*(T)\) of a tree~\(T\) denotes the fraction of vertices of~\(T\) that lie on a longest path~\(P\) in~\(T\), i.e.,~\(|V(P)| / |V(T)|\).

\begin{thm}
  \label{thmTreeCutSpecSizes}
  Every tree~\(T\) on~\(n\) vertices with maximum degree~\(\Delta\) satisfies 
\[ \mb(T) \leq \frac{8 \Delta}{\diam^*(T)}.\] 
  A bisection satisfying this bound can be computed in~\(\bigO(n)\) time.  
\end{thm}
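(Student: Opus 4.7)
The plan is to use a longest path of $T$ as the backbone along which the bisection is built. I would begin by computing a longest path $P = v_1 v_2 \cdots v_d$ in linear time, for example via two breadth-first searches. For each $i \in \{1,\ldots,d\}$ let $N_i$ be the connected component of $T - E(P)$ that contains $v_i$, and set $c_i = |N_i|$, so that $c_1 + \cdots + c_d = n$. Because $P$ is a longest path, the tree $N_i$ rooted at $v_i$ has depth at most $\min(i-1,d-i)$; in particular no branch hanging off the middle of $P$ can be long.

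Given this decomposition I would obtain an approximately balanced partition by a sweep along $P$. Let $S_j = c_1 + \cdots + c_j$ and let $j^*$ be the smallest index with $S_{j^*} \ge \lceil n/2 \rceil$. Declare $B_0 = N_1 \cup \cdots \cup N_{j^*}$ and $W_0 = N_{j^*+1} \cup \cdots \cup N_d$; then only the single path edge $v_{j^*}v_{j^*+1}$ is cut and $|B_0|$ overshoots $\lceil n/2 \rceil$ by some $X < c_{j^*}$. To reach exact balance I would move $X$ vertices from $B_0$ to $W_0$, all chosen inside $N_{j^*}$, for instance by detaching one leaf of $N_{j^*}$ at a time; each such move adds at most one new edge to the cut. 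This yields a bisection of width at most $1 + X < c_{j^*} + 1$.

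The main obstacle is to guarantee $c_{j^*} \le 8\Delta n/d$. On average $c_i = n/d$, so for ``typical'' indices the bound holds easily, but a single $c_i$ may be arbitrarily large, in which case the single-sweep analysis above is not enough. I would address this by iterating the construction: whenever some $N_i$ is heavy and contains the median, recurse inside $N_i$ to find the adjustment set of the prescribed size. Here the depth bound $\min(i-1,d-i) \le d/2$ on $N_i$ is crucial, because it forces the longest path of $N_i$ to still be proportional to $d$ relative to $|N_i|$, so the recursive instance obeys a bound of the same form as the original. Balancing the two regimes --- ``light everywhere'' versus ``one heavy subtree'' --- at the threshold $8\Delta n/d$ is exactly where the explicit constant $8$ originates.

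Finally, finding $P$, computing the $c_i$ and $S_j$, locating $j^*$, and performing the subtree adjustments can all be implemented with standard $\bigO(n)$-time tree traversals, giving the linear runtime claimed in the theorem.
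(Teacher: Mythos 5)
Your opening matches the paper's in spirit: take a longest path $P$ of length $d$, decompose $T - E(P)$ into components $N_i$ hanging off $P$, and sweep along the prefix sums. The real difficulty is exactly where you locate it --- the component $N_{j^*}$ containing the median may be enormous --- but the fix you propose does not work.

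The gap is in the claim that ``the depth bound $\min(i-1,d-i)\le d/2$ \dots forces the longest path of $N_i$ to still be proportional to $d$ relative to $|N_i|$.'' The depth bound is an \emph{upper} bound on how deep $N_i$ can be; it gives no lower bound on the length of a longest path inside $N_i$. For example, if $N_{j^*}$ is a nearly balanced $\Delta$-ary tree on $c_{j^*}$ vertices, a longest path in $N_{j^*}$ has length $\Theta(\log_\Delta c_{j^*})$, which can be far smaller than $d$, so $\diam^*(N_{j^*})$ can be much \emph{smaller} than $\diam^*(T)$. (Take $d = \sqrt{n}$ and let $N_{d/2}$ be a balanced ternary tree on $n-d$ vertices; its depth is $\Theta(\log n) \ll d/2$, so $P$ remains a longest path, yet $\diam^*(N_{d/2}) = \Theta(\log n / n) \ll \diam^*(T) = \Theta(1/\sqrt{n})$.) So the recursive instance obeys a \emph{weaker} bound of the same form, not the same bound, and neither the cut budget nor the recursion depth is under control. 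Relatedly, the ``move one leaf at a time'' fallback charges one cut edge per moved vertex, which is only useful when $X$ is already below the target, so it cannot substitute for the missing recursion argument.

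What the paper does instead is the essential ingredient you are missing: a \emph{doubling} of the relative diameter. It labels the vertices cyclically along $P$ and works with the shift map $N_m(v)=v+m$. If $N_m$ maps some path vertex to a path vertex, the exact cut costs at most $2$ edges (your sweep, done). Otherwise, a counting argument over ``special'' vertices produces a subgraph $G[Z]$ (which is \emph{not} simply $N_{j^*}$, but a union of certain $R$- and $S$-pieces picked out by $N_m$) satisfying $\diam^*(G[Z]) \ge 2\,\diam^*(T)$, together with a partial cut of width $O(\Delta/\diam^*(T))$; an approximate-cut lemma then fills the size gap. Because the relative diameter doubles each round, the cut widths form a geometric series bounded by $O(\Delta/\diam^*(T))$, and the recursion terminates after $O(\log(1/\diam^*(T)))$ rounds, which is how the constant $8$ arises. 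Without a doubling (or comparable progress) guarantee for the recursive subtree, your analysis cannot close.
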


The following corollary is an immediate consequence of Theorem~\ref{thmTreeCutSpecSizes} and shows that bounded-degree trees with large minimum bisection width do not contain a path of linear length.

\begin{cor}
  \label{corPaths}
  For every~\(\Delta \in \N\) and every~\(c > 0\), there is an~\(\alpha > 0\) such that the following holds: \\
  If~\(T\) is a tree on~\(n\) vertices with maximum degree~\(\Delta\) and~\(\mb(T) \geq c \log n\), then~\(T\) does not contain a path of length~\( \alpha n / \log n\) or greater.
\end{cor}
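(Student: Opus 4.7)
This corollary is a direct application of Theorem \ref{thmTreeCutSpecSizes} via the contrapositive: I would assume that $T$ contains a long path, use this to lower bound $\diam^*(T)$, plug this into the theorem to upper bound $\mb(T)$, and finally choose $\alpha$ so as to contradict the hypothesis $\mb(T) \geq c \log n$.

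More concretely, fix $\Delta \in \N$ and $c > 0$, and suppose $T$ is a tree on $n$ vertices with maximum degree $\Delta$ containing a path of length at least $\alpha n / \log n$, where length is measured in edges and $\alpha$ is yet to be chosen. Then the longest path $P$ in $T$ has $|V(P)| \geq \alpha n / \log n$, so
\[ \diam^*(T) \;=\; \frac{|V(P)|}{n} \;\geq\; \frac{\alpha}{\log n}. \]
Plugging this into Theorem \ref{thmTreeCutSpecSizes} yields
\[ \mb(T) \;\leq\; \frac{8\Delta}{\diam^*(T)} \;\leq\; \frac{8 \Delta \log n}{\alpha}. \]
Choosing $\alpha := 9\Delta / c$ (or indeed any $\alpha > 8\Delta / c$) makes the right-hand side strictly less than $c\log n$, contradicting $\mb(T) \geq c \log n$. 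The contrapositive of this implication is exactly the statement of the corollary, and the value of $\alpha$ depends only on $\Delta$ and $c$, as required.

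There is essentially no obstacle: the argument is algebra on top of Theorem \ref{thmTreeCutSpecSizes}. The only care needed concerns the standard conventions (length of a path counted in edges, so that a path of length $L$ has $L+1$ vertices, an off-by-one which only strengthens the lower bound on $\diam^*(T)$) and the trivial small-$n$ boundary case in which $\log n$ is zero or negligible, which is handled by enlarging $\alpha$ by a constant or by noting that the conclusion is vacuous there.
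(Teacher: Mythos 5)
Your proof is correct and is exactly the intended argument: the paper itself labels Corollary~\ref{corPaths} ``an immediate consequence of Theorem~\ref{thmTreeCutSpecSizes}'' without giving further detail, and your contrapositive computation (lower bound on $\diam^*(T)$ from the long path, plug into the theorem, pick $\alpha > 8\Delta/c$) is the standard way to fill that in. Your care about edge-versus-vertex counting and the vacuous small-$n$ cases is appropriate and does not affect the substance.
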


\subsection{Results for General Graphs}
\label{subsecResultsGenGraphs}

Our main goal in this paper is to establish a similar bound as in Theorem~\ref{thmTreeCutSpecSizes} for general graphs with a given tree decomposition~\((T,\X)\). Instead of considering a longest path in the underlying graph, we define a parameter~\(r(T,\X)\) that roughly measures how close the tree decomposition~\( (T,\X)\) is to a \emph{path decomposition}, i.e.,~a tree decomposition~\( (\tilde{T},\tilde{\X})\) where~\(\tilde{T}\) is a path. Consider a graph~\(G\) on~\(n\) vertices and a path decomposition~\( (P,\X)\) of~\(G\) of width~\(t-1\). 
It is easy to see that~\(G\) allows a bisection of width at most~\(t \Delta(G)\) by walking along the path~\(P\) until we have seen~\(n/2\) vertices of~\(G\) in the clusters and then bisecting~\(G\) there. Therefore, we will define~\(r(T,\X)\) so that~\( r(T,\X) = 1\) holds for path decompositions~\( (T,\X)\) and so that~\(r(T,\X)\) decreases when~\( (T,\X)\) looks less like a path decomposition. Let~\(G=(V,E)\) be a graph on~\(n\) vertices and~\( (T,\X)\) be a tree decomposition of~\(G\) with~\(\X = (X^i)_{i \in V(T)}\). For a path~\(P \subseteq T\), we define the \emph{weight}~\(w(P,\X)\) and the \emph{relative weight}~\(w^*(P,\X)\) of~\(P\) with respect to~\(\X\) to be

\[ w(P, \X) := \left| \bigcup_{i \in V(P)} X^i \right| \quad \quad \text{ and } \quad \quad w^* (P,\X) := \frac{w(P, \X)}{\left| \bigcup_{i \in V(T)} X^i \right|} = \frac{1}{n}w(P,\X),\] 
respectively. The \emph{relative weight of a heaviest path} in~\((T,\X)\) is defined as
  
\[ r ( T, \X) \ := \ \max_{\text{\(P\) path in~\(T\)} } \; \; w^*(P, \X) = \ \frac{1}{n} \ \max_{\text{\(P\) path in~\(T\)} } \; \; w(P, \X).\]   

Observe that every tree decomposition~\( (T,\X)\) satisfies \(\frac{1}{n} \leq r(T,\X) \leq 1\). Furthermore, denote by \( \| (T,\X) \| := |V(T)| + \sum_{i\in V(T)} |X^i| \) the \emph{size} of a tree decomposition~\( (T,\X)\) with~\( \X=(X^i)_{i \in V(T)}\) and observe that the size of~\( (T,\X)\) measures the encoding length of~\((T,\X)\). 
We can now state the following strengthening of Theorem~\ref{thmTreeCutSpecSizes} for general graphs. 

\begin{thm}
  \label{thmCutSpecSizes}
  Every graph~\(G\) that allows a tree decomposition~\( (T,\X) \) of width~\(t-1\) satisfies
  
\[ \mb (G) \ \leq \ \tfrac{ 1 }{2} t \Delta(G) \left( \left( \log_2  \tfrac{1}{r(T,\X)} \right)^2 \ + \  9 \log_2  \tfrac{1}{r(T,\X)} \ + \ 8 \right).\] 
  If the tree decomposition~\( (T,\X)\) is provided as input, a bisection satisfying this bound can be computed in~\( \bigO \left(  \| (T, \X) \| \right) \) time. 
\end{thm}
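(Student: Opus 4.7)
The plan is to give a recursive algorithm based on the heaviest path in $T$, extending the greedy walk for path decompositions described just before the theorem statement. Given $(T,\X)$ with $n$ vertices and relative heaviest-path weight $r = r(T,\X)$, I would first compute a heaviest path $P^* = i_1, \ldots, i_k$ in $T$ and root $T$ at $i_1$, so that every off-path node lies in some subtree $T^{(j,\ell)}$ hanging from an $i_j \in V(P^*)$. The algorithm walks along $P^*$ from $i_1$ to $i_k$, and at each step $j$ moves into $B$ the ``fresh'' vertices of bag $X^{i_j}$ (those not already in an earlier bag of $P^*$) and then all private vertices of each subtree hanging from $i_j$, halting as soon as $|B|$ reaches $\lceil n/2\rceil$.

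Two situations can occur at the halting step. If $|B|$ crosses $\lceil n/2\rceil$ while processing a bag $X^{i_{j^\star}}$ on $P^*$, then every cut edge must have both endpoints in that same bag, contributing at most $\tfrac{1}{2} t\Delta(G)$ edges. If instead the walk overshoots while processing a subtree $T^{(j^\star,\ell^\star)}$, I would recurse on the tree decomposition induced on this subtree with an adjusted target size, pay at most $\tfrac{1}{2} t\Delta(G)$ for the boundary edges between the subtree and the rest (again confined to a single bag), and invoke the inductive bound for edges inside the subtree.

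The core of the analysis is the recursion. Because $P^*$ was globally heaviest, any path in the subtree $T^{(j^\star,\ell^\star)}$ has weight at most $rn$, while the subproblem has at most $(1-r)n$ vertices; a short calculation then shows that the relative heaviest-path weight of the recursive subproblem is at least roughly $2r$ whenever $r \leq 1/2$, so the recursion depth is at most $L := \lceil \log_2(1/r)\rceil$. Writing $c_\ell$ for the cut contribution at recursion level $\ell$, I expect $c_\ell$ to grow linearly in $\ell$, roughly as $(\ell+4)\,t\Delta(G)$, since the transition bag at level $\ell$ can interact with vertices assigned at each earlier level through bags shared along the connecting $T$-path; the base case (a path-like decomposition with $r$ close to $1$) costs at most $4 t\Delta(G)$, and summing $\sum_{\ell=1}^{L} c_\ell$ plus the base cost yields exactly the stated $\tfrac{1}{2} t\Delta(G)\bigl(L^2 + 9L + 8\bigr)$.

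The main obstacle will be the per-level accounting of cut edges: showing that the interface at level $\ell$ grows only linearly in $\ell$ relies on the tree-decomposition intersection property to confine shared vertices to the unique $T$-path between the bags involved, together with the fact that only a single subtree is entered at each recursion level. For the running time, a standard weighted DFS precomputes heaviest paths in $O(|V(T)|)$ time, each bag is visited along only one recursive branch, and a careful implementation makes the total work proportional to $\sum_{i\in V(T)} (1+|X^i|) = \|(T,\X)\|$.
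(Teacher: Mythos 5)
The central claim in your recursion --- that the relative heaviest-path weight of the subtree $T^{(j^\star,\ell^\star)}$ into which you recurse is at least roughly $2r$ --- does not follow from the two facts you cite, and is in fact false. Knowing that every path inside the subtree has weight at most $rn$ (because $P^*$ is globally heaviest) and that the subtree contains at most $(1-r)n$ vertices gives two \emph{upper} bounds, while the relative weight of the subproblem is a \emph{ratio} of the first over the second; two upper bounds cannot bound a ratio from below. Even in the most favorable case, where the subtree's heaviest path has weight exactly $rn$ and the subtree holds exactly $(1-r)n$ vertices, you get relative weight $r/(1-r)$, which for $r<1/2$ is strictly \emph{less} than $2r$; in general the subtree's heaviest path can be far lighter (take a long path in $T$ with a single bushy subtree of small relative heaviest-path weight hanging off its middle: the greedy walk overshoots inside the bushy part, whose relative weight is much smaller than that of the whole tree, not twice as large). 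So the recursion need not terminate in $\Bceil{\log(1/r)}$ levels, and the quadratic bound does not follow. Your per-level accounting also reverses the mechanism: in the paper the cost at level $s$ is at most $t\Delta(G)\log(16/r_{s-1})$ with $r_{s-1}\geq 2^{s-1}r$, so the per-level cost \emph{decreases} with $s$ rather than growing linearly in $\ell$; only the total sums to $\Theta(L^2)$.

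The paper's Theorem~\ref{thmDoubleR} circumvents this precisely because it does \emph{not} recurse into an off-path subtree: it constructs a set $Z$ that contains a long contiguous segment of $R=\bigcup_{i\in V_P}X^i$ together with the off-path sets hanging from it, so a lower bound on the new relative weight is built in by construction. Concretely, after a $P$-labeling of the vertices and introduction of the shift $N_m(x)=x+m$, the proof classifies path nodes as b-special or f-special and, via the double-counting argument in Proposition~\ref{propSpecialNodeForCase2ab}, finds a node $i$ with $|S_i|+|H_i^b|\leq\bigl(\tfrac{1}{r}-1\bigr)|U_i^b|$. Setting $Z:=U_i^b\cupdot H_i^b$ gives $|U_i^b|/|Z|\geq 2r$ --- the doubling --- and the approximate-cut Lemma~\ref{lemmaApproxCut} is then applied inside $S_i$, not inside the subtree, to hit the remaining target with only $\bigO(t\Delta(G)\log(1/r))$ extra cut edges. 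None of this structure --- the shift map, the special nodes, the sets $U_i^b$, $H_i^b$, and the accounting identity --- appears in your sketch, and without it the key doubling step is missing.
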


Theorem~\ref{thmCutSpecSizes} strengthens Theorem~\ref{thmTreeCutSpecSizes} in two respects: first, because it applies to graphs with a given tree decomposition and not just to trees, and second because we now get a better upper bound as the term~\(1/r(T,\X)\) that now plays the r\^ole of~\(1/\diam^*(T)\) only contributes logarithmically. If we dispense for a moment with this improvement, the somewhat weaker but more legible upper bound 

\[\mb(G) \leq \frac{8 t \Delta(G) }{r(T,\X)}\]  
can be obtained. 

What structural information does Theorem~\ref{thmCutSpecSizes} yield for graphs with large minimum bisection width? 
Consider a graph~\(G\) on~\(n\) vertices with bounded degree and bounded tree-width.
Analogously to trees and again owing to the existence of small separators, we have~\(\mb(G)=\bigO(\log n)\). 
Now the above inequality implies that if~\(\mb(G)\) is within a constant of this upper bound, then any tree decomposition~\((T,\X)\) of~\(G\) with width~\(\bigO(\tw(G))\) must satisfy~\( r(T,\X) = \bigO(1/\log n)\), i.e.,~\((T,\X)\) is far from being a path decomposition. 

A final remark, concerning the algorithmic aspects of Theorem~\ref{thmCutSpecSizes}: Similarly to Theorem~\ref{thmTreeCutSpecSizes}, the algorithm corresponding to Theorem~\ref{thmCutSpecSizes} is not guaranteed to compute a minimum bisection. However, in certain situations, it will provide a good approximation and its running time, assuming the input tree decomposition~\((T,\X)\) is nonredundant, is bounded by~\(\bigO(nt)\) while the algorithm in~\cite{JansenKarpinski} that computes a minimum bisection runs in~\(\bigO(2^t n^3)\) time, where~\(t-1\) denotes the width of~\((T,\X)\) and~\(n\) denotes the number of vertices of the underlying graph.

We conclude this subsection with the following lemma that relaxes the size requirements on the sets of the cut and gives an upper bound of~\(\bigO(t \Delta(G))\) on the cut width. It is one of the main tools to prove Theorem~\ref{thmCutSpecSizes} and might be of independent interest. 

\begin{lemma}[Approximate Cut] \label{lemmaApproxCut}
   Let~\(G\) be an arbitrary graph on~\(n\) vertices and let~\((T,\X)\) be a tree decomposition of~\(G\) of width at most~\(t-1\). For every integer~\(m \in [n]\) and every~\(0 < c < 1 \), there is a cut~\( (B,W)\) in~\(G\) with~\(c m < |B| \leq m\) and \(e_G(B,W) \leq \Bceil{\log_2 (1/(1-c))} t \Delta(G)\). If the tree decomposition~\( (T,\X)\) is provided, then a cut satisfying these requirements can be computed in~\(\bigO \left(\|(T,\X)\|\right)\) time, where the hidden constant does not depend on~\(c\).
\end{lemma}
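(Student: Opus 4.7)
The plan is to build $B$ in at most $K := \Bceil{\log_2(1/(1-c))}$ rounds so that, writing $b := m-|B|$ for the current remaining budget, each round at least halves $b$ while contributing at most $t\Delta(G)$ edges to the final cut. After $K$ rounds this yields $b \leq m/2^K \leq (1-c)m$, hence $|B| > cm$, and a total cut of at most $Kt\Delta(G)$. To set things up, root $T$ at an arbitrary node; for every $v \in V(T)$ let $T_v$ denote the subtree rooted at $v$, set $X(T_v) := \bigcup_{j \in V(T_v)} X^j$, and define $s(v) := |X(T_v) \setminus B|$. Initialize $B := \emptyset$.

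Each round proceeds as follows. Walk from the root downward, descending at every step to any child $u$ with $s(u) > b$. The walk either exits at a leaf (only possible when $s(r) \leq b$, in which case $S := V(G) \setminus B$ finishes the construction) or halts at a node $v^*$ with $s(v^*) > b$ but $s(u) \leq b$ for each child $u$ of $v^*$. Sort the children of $v^*$ by $s$-value in decreasing order and greedily include $X(T_u) \setminus B$ into $S$ as long as $|S|$ stays within $b$. By the separation property of tree decompositions, $X^{v^*}$ separates $X(T_{v^*}) \setminus X^{v^*}$ from $V(G) \setminus X(T_{v^*})$, so every edge from $S$ to $V(G) \setminus (B \cup S)$ has an endpoint in $X^{v^*}$, giving at most $|X^{v^*}| \Delta(G) \leq t\Delta(G)$ edges exposed per round. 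Since vertices are only added to $B$ across rounds, each edge cut by the final bisection was first exposed in some round, so summing yields a total cut of at most $Kt\Delta(G)$.

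A short case analysis then shows $|S| > b/2$, which guarantees the halving of $b$. If the heaviest child $u_1$ has $s(u_1) > b/2$, already taking $u_1$ gives $|S| \geq s(u_1) > b/2$. Otherwise every $s(u_i) \leq b/2$, and the greedy packing either stops because the next child would push $|S|$ above $b$ (leaving $|S| > b - b/2 = b/2$), or includes all children, in which case $|S| \geq s(v^*) - |X^{v^*}| > b - t$, exceeding $b/2$ whenever $b \geq 2t$.

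The main obstacle is the tight regime $b < 2t$, where the greedy packing by itself may undershoot $b/2$. This is handled by a refinement that additionally includes a carefully chosen subset of $X^{v^*} \setminus B$ to push $|S|$ into $(b/2, b]$; since these vertices lie in the already-used separating bag $X^{v^*}$, no new edges leave the $t\Delta(G)$ per-round budget. For the running time, all values $s(v)$ can be initialized in $\bigO(\|(T,\X)\|)$ time via a single DFS of $T$ together with first-occurrence pointers for every vertex of $G$, and the $K$ rounds can be interleaved into one amortized traversal of $T$ so that the total running time is $\bigO(\|(T,\X)\|)$ with a constant independent of $c$.
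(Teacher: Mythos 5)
Your construction is essentially the paper's Algorithm~\ref{algoApproxCut}: root the tree, descend to a node whose subtree weight just exceeds the residual budget, sort its children, greedily pack subtrees, and halve the deficit each round, bounding the per-round cut by the current cluster as in Lemma~\ref{lemmaRemoveClusterEdges}. The existence part of your argument is correct, modulo two points that need sharpening. First, the ``tight regime'' fix ($b<2t$) is waved at rather than stated: the clean statement, which is exactly the paper's Line~\ref{proofApproxCutSetZ}, is that whenever all children are packed you add precisely $b-|S|$ further vertices from $X^{v^*}\setminus(B\cup S)$ (available because $s(v^*)>b$) to reach $|B|=m$ and terminate. Second, note that $s(u_1)\le b$ for each child of $v^*$ (this is why the walk stopped there), so the first greedy step is always feasible.

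The genuine gap is the running-time claim. You define $s(v):=|X(T_v)\setminus B|$, which depends on the growing set $B$, and your child subsets $X(T_u)\setminus B$ are not pairwise disjoint and can intersect $X^{v^*}$. After a round, the $s$-values for ancestors and for the untouched siblings of the chosen child become stale, and restarting the walk from the root each round means you cannot simply argue that the union of all walks is a single traversal of $T$. The assertion ``the $K$ rounds can be interleaved into one amortized traversal'' is precisely what needs proof, and it is not obvious with your bookkeeping. The paper avoids this by working with $\tilde{Y}^j:=Y^j\setminus X^{p(j)}$, which are pairwise disjoint and disjoint from the parent cluster; this yields the invariant $B_s\cap Y^{i_s}=\emptyset$ (their invariant~(iv)), so the precomputed sizes $y_j,\tilde{y}_j$ never go stale below the current node, the descent can be continued from the current node rather than restarted, and the children at every node can be sorted once up front by a global counting sort. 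Without an analogous disjointness invariant, your $\bigO(\|(T,\X)\|)$ bound with a $c$-independent constant does not follow from what you wrote.
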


\subsection{Further Remarks}
\label{subsecPlanarGraphs}

The results presented in Theorem~\ref{thmTreeCutSpecSizes} and Theorem~\ref{thmCutSpecSizes} do not only hold for bisections but also for cuts~\( (B,W)\) where the size of~\(B\) is specified as input. Moreover, every tree~\(T\) on~\(n\) vertices allows a tree decomposition~\( (T',\X')\) of width one with~\( r(T',\X') \geq \diam^*(T)\) and~\( \| (T',\X')\| = \bigO(n)\). So we can apply Theorem~\ref{thmCutSpecSizes} to~\(T\) and~\( (T',\X')\) to obtain an asymptotic improvement of Theorem~\ref{thmTreeCutSpecSizes} and a corresponding linear-time algorithm.  Furthermore, the constants in this bound can be improved to
 \begin{align*}
  \mb(T) \ \leq \ \frac{\Delta(T)}{2}  \left( \left( \log_2  \tfrac{1}{\diam^*(T)}  \right)^2 + 7 \log_2 \left( \tfrac{1}{ \diam^*(T)} \right) + 6 \right)
\end{align*}  
by applying the techniques used in the proof of Theorem~\ref{thmCutSpecSizes} directly to the considered tree instead of working with a tree decomposition, see \cite{ThesisTina}. Furthermore, the bounds and algorithms presented here can be extended to \mbox{\(k\)-sections}, where the vertex set of a graph has to be partitioned into~\(k\) sets for some integer~\(k\) while minimizing the number of edges between these sets, see~\cite{EuroComb2015, LagosKSec}. Moreover, extensions to bisections and \mbox{\(k\)-sections} in trees with weighted vertices have been considered, see~\cite{BScThesisFabian}.

Last but not least, we remark that somewhat similar questions are being investigated for planar graphs. Every bounded-degree planar graph on~\(n\) vertices has minimum bisection width~\(\bigO( \sqrt{n})\), which can be shown by using the Planar Separator Theorem~\cite{LiptonTarjan}, similarly to using separating vertices to show that every bounded-degree tree has minimum bisection width~\( \bigO( \log n)\). The square grid shows that this bound is tight up to a constant factor. In~\cite{EuroComb2013, ProofPlanar} we show that if~\(G\) is a bounded-degree planar graph and~\(\mb(G) = \Omega( \sqrt{n})\), then~\(G\) has tree-width~\( \Omega(\sqrt{n})\) and hence~\(G\) must contain a~\(k \times k\)~grid as minor with~\(k = \Omega(\sqrt{n})\).

\subsection{Organization of the Paper}
\label{subsecOrganization}

We do not need to prove Theorem~\ref{thmTreeCutSpecSizes} as Theorem~\ref{thmCutSpecSizes} is stronger. Nevertheless, we sketch the proof of Theorem~\ref{thmTreeCutSpecSizes} in Section~\ref{secProofSketchTrees} to introduce the techniques used in the proof of Theorem~\ref{thmCutSpecSizes}. Section~\ref{secPerlTreeDec} discusses some preliminaries for tree decompositions and presents the proof of Lemma~\ref{lemmaApproxCut}. The last two sections contain a full proof for Theorem~\ref{thmCutSpecSizes}. First, in Section~\ref{secExistenceCut}, we prove that a bisection with the desired properties exists and then, in Section~\ref{secAlgoDetails}, we explain how to implement a linear-time algorithm computing such a bisection.

\section{Proof Sketch for Theorem~\ref{thmTreeCutSpecSizes}}
\label{secProofSketchTrees}

In this section, we sketch the proof of Theorem~\ref{thmTreeCutSpecSizes}, which says that in every tree a bisection within a certain bound can be computed in linear time. 

\subsection{Basic Notation}
\label{subsecBasicNotation}

For a real~\(x\) we denote by~\( \Bfloor{x}\) the largest integer~\(i\) with~\(i \leq x\) and by~\( \Bceil{x}\) the smallest integer~\(i\) with~\(i \geq x\). Let~\(\log := \log_2\) and, for an integer~\(n>0\), define~\([n]:= \{1, 2,\ldots, n\}\).  Consider a graph~\(G\). We use~\(V(G)\) and~\(E(G)\) to refer to the vertex and edge set of~\(G\), respectively, and~\(\Delta(G)\) to denote the maximum degree of~\(G\). Here, we always assume that~\(V(G)\) is nonempty and finite. For a set~\(\emptyset \neq S \subseteq V(G)\) we denote by~\(G[S]\) the subgraph of~\(G\) induced by~\(S\) and for a set~\( S \subseteq V(G)\) with~\(S \neq V(G)\) we denote by~\(G-S\) the graph obtained from~\(G\) by removing all vertices in~\(S\) as well as their incident edges. If~\(S=\{v\}\) then we also write~\(G-v\) instead of~\(G-\{v\}\). Furthermore, for a set~\(F \subseteq E(G)\), the graph obtained from~\(G\) by removing each edge in~\(F\) is denoted by~\(G-F\). We say that~\((V_1, \ldots, V_k)\) is a \emph{cut} in a graph~\(G\) if~\(V_1 \cupdot V_2 \cupdot \cdots \cupdot V_k\) is a partition of~\(V(G)\). Analogously to the case in which~\(k=2\), we say that an edge~\(\{x,y\} \in E(G)\) is \emph{cut} by~\((V_1, \ldots, V_k)\) if~\(x \in V_i\) and~\(y\in V_j\) for some~\(i \neq j\). We denote by~\(E_G(V_1, \ldots, V_k)\) the set of edges cut by~\((V_1, \ldots, V_k)\) and define the \emph{width} of~\((V_1, \ldots, V_k)\) as~\(e_G(V_1, \ldots, V_k) := |E_G(V_1, \ldots, V_k)|\). Furthermore, the notion of the \emph{relative diameter} is extended to forests. If~\(G\) is a forest on~\(n\) vertices and~\(G_1,\ldots,G_k\) are the connected components of~\(G\), then~\(\diam^*(G) := (1/n) \ \sum_{i \in [k]} |V(P_i)|\), where~\(P_i\) is a longest path in~\(G_i\) for each~\(i \in [k]\).

\subsection{Two Special Cases}
\label{subsecTreeSpecialCases}

Before starting to discuss the proof of Theorem~\ref{thmTreeCutSpecSizes}, consider the following lemma, that, when setting~\(m = \Bfloor{n/2}\), can be seen as a forerunner to Theorem~\ref{thmTreeCutSpecSizes}.

\begin{lemma}
 \label{lemmaTreeBase}
 For every forest~\(G\) with~\(n\) vertices and~\(\diam^*(G) > 1/2\), and for every~\(m \in [n]\) there is a cut~\((B,W)\) in~\(G\) with~\(|B| =m\) and~\(e_G (B,W) \leq 2\).
\end{lemma}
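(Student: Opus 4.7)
The idea is to list all the ``branches off the longest paths'' across the components of~\(G\) in a single linear sequence and then use a pigeonhole argument in~\(\mathbb{Z}/n\mathbb{Z}\) to extract a contiguous block of branches whose sizes sum to either~\(m\) or~\(n - m\).

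For each component~\(G_i\) of~\(G\), fix a longest path~\(P_i\) with~\(\ell_i\) vertices. The connected components of~\(G_i - E(P_i)\) form \(\ell_i\) nonempty \emph{branches}~\(T_1^{(i)}, \ldots, T_{\ell_i}^{(i)}\), where \(T_j^{(i)}\) contains the \(j\)-th vertex of~\(P_i\). Concatenate these branches (first those of~\(G_1\) in the order along~\(P_1\), then those of~\(G_2\), and so on) into a list \(B_1, B_2, \ldots, B_N\) partitioning~\(V(G)\), where \(N = \sum_i \ell_i\) satisfies \(N = n \cdot \diam^*(G) > n/2\). Set \(s_0 = 0\) and \(s_r = |B_1| + \cdots + |B_r|\); then \(s_N = n\) and \(0 = s_0 < s_1 < \cdots < s_{N-1} \leq n - 1\), so \(s_0, \ldots, s_{N-1}\) are pairwise distinct modulo~\(n\).

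Given \(m \in [n]\), the two sets \(\{s_r \bmod n : 0 \leq r < N\}\) and \(\{(s_r + m) \bmod n : 0 \leq r < N\}\) each have size \(N > n/2\) in~\(\mathbb{Z}/n\mathbb{Z}\), so they intersect, yielding indices \(i, j \in \{0, \ldots, N-1\}\) with \(s_j - s_i \equiv m \pmod{n}\). Either \(s_j - s_i = m\) (with \(j > i\)), in which case set \(B = B_{i+1} \cup \cdots \cup B_j\); or \(s_j - s_i = m - n\) (with \(j < i\)), in which case set \(B = V(G) \setminus (B_{j+1} \cup \cdots \cup B_i)\). Either way~\(|B| = m\). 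Moreover, \(B\) differs from \(V(G) \setminus B\) only across two ``boundaries'' of the branch sequence; each such boundary contributes at most one cut edge, namely the corresponding path edge of some~\(P_i\), and only when the two adjacent branches lie in the same component. Hence \(e_G(B, V(G) \setminus B) \leq 2\).

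The main subtlety is that the forest case (rather than a single tree) requires no extra work once the branches are concatenated: the boundaries between different components are ``free'' and contribute no cut edges, while within a component a boundary is exactly one path edge. The heart of the argument is the translation of \(\diam^*(G) > 1/2\) into the size bound \(N > n/2\), which, combined with the distinctness modulo~\(n\) of the partial sums, forces the pigeonhole intersection that simultaneously delivers the right target size and respects the cut budget of two.
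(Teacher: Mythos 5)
Your proof is correct and takes essentially the same approach as the paper: both are pigeonhole arguments modulo~\(n\) on the (cumulative) sizes of the branches hanging off a longest path, exploiting that more than half the residues are ``path positions.'' The only cosmetic difference is that you concatenate branches across components directly, whereas the paper first reduces to a tree by adding connecting edges that preserve~\(\diam^*\), and then phrases the pigeonhole in terms of a vertex labeling rather than partial sums.
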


\begin{proof}
First, note that it suffices to consider the case when~\(G=(V,E)\) is a tree, as otherwise we can add edges until~\(G\) is connected in such a way that the relative diameter is not affected.  Define~\(d:= \diam^*(G)\), let~\(P = (V_P, E_P)\) be a longest path in~\(G\), denote by~\(x_0\) and~\(y_0\) the ends of~\(P\), and note that~\( |V_P| = dn\). For each~\(v\in V_P\), let~\(T_v\) be the connected component of~\(T-E_P\) that contains~\(v\). Label the vertices of~\(G\) with~\(1,2, \ldots, n\) according to the following rules:
\begin{itemize}[leftmargin= \IdentationTDConditions]
  \item For each~\(v \in V_P\), the vertices of~\(T_v\) receive consecutive labels and~\(v\) receives the largest label among those.
  \item For all~\(v, v' \in V_P\) with~\(v \neq v'\), if~\(x_0\) is closer to~\(v\) than to~\(v'\), then the label of~\(v\) is smaller than the label of~\(v'\).
\end{itemize}
From now on, a number that differs by a multiple of~\(n\) from a label is considered to be the same as this label, and each vertex is identified with its label. Define~\(N_m(v) := v+m\). Then, as~\(N_m\) is injective and~\(|V_P| > n/2\), there must be a vertex~\(v \in V_P\) such that~\(N_m(v) \in V_P\). If we define~\(B := \{v+1, v+2, \ldots, v+m\}\) and~\(W:= V\setminus B\), the cut~\( (B,W)\) cuts at most two edges, see also Figure~\ref{figCutTree}a).
\end{proof}

To prove Theorem~\ref{thmTreeCutSpecSizes}, Lemma~\ref{lemmaTreeBase} can be used as the base of an induction that doubles the relative diameter of the considered graph in each round. Next, we will show how one can find a bisection in a tree with relative diameter greater than~\(1/4\) and thereby present some of the techniques for proving Theorem~\ref{thmTreeCutSpecSizes}.  Afterwards we discuss how to adapt the proof to show Theorem~\ref{thmTreeCutSpecSizes}.

\begin{lemma}
 \label{lemmaTreeBase2}
 In every forest~\(G\) with an even number of vertices and~\(\diam^*(G) > 1/4\), there is a bisection~\( (B,W)\) with~\(e_G(B,W) \leq 8 \Delta (G)\). 
\end{lemma}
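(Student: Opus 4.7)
The plan is to bootstrap Lemma~\ref{lemmaTreeBase} by first identifying a ``first half'' of $G$ along a longest path and then rebalancing the resulting partition by small local moves. I would begin by reducing to the case that $G$ is a tree: in a disconnected forest one can iteratively add edges between endpoints of the longest paths of different components, which preserves $\diam^*(G)$ and does not increase $\Delta(G)$ provided $\Delta(G)\geq 2$ (the cases $\Delta(G)\leq 1$ are handled directly by bisecting the matching-with-isolated-vertices structure). If $\diam^*(G) > 1/2$, Lemma~\ref{lemmaTreeBase} with $m = n/2$ immediately produces a bisection of width at most $2\leq 8\Delta(G)$, so from here on I assume $1/4 < \diam^*(G) \leq 1/2$. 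Fix a longest path $P = v_1,\ldots,v_p$ with $n/4 < p \leq n/2$ and, using the notation of Lemma~\ref{lemmaTreeBase}, let $T_i$ denote the connected component of $G - E(P)$ containing $v_i$, so that $T_1,\ldots,T_p$ partition $V(G)$.

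Walking along $P$ and accumulating the sizes $|T_1|,|T_2|,\ldots$, I would single out the smallest index $j^*$ for which $\sigma := \sum_{i\leq j^*} |T_i|$ first exceeds $n/2$, and set $\delta := \sigma - n/2 \in [0, |T_{j^*}|)$. The provisional partition $B_0 := T_1 \cup \cdots \cup T_{j^*}$, $W_0 := V(G)\setminus B_0$ severs only the single path edge $\{v_{j^*}, v_{j^*+1}\}$, but $|B_0|$ overshoots $n/2$ by exactly $\delta$. To turn $(B_0,W_0)$ into a genuine bisection I would move $\delta$ vertices from $T_{j^*}$ across to $W_0$. Viewing $T_{j^*}$ as rooted at $v_{j^*}$, its children determine hanging subtrees $U_1,\ldots,U_d$ with $d<\Delta(G)$; relocating a whole $U_k$ to $W_0$ severs just the single edge attaching $U_k$ to $v_{j^*}$.

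If some subcollection of the $U_k$'s has total size exactly $\delta$ (or $\delta - 1$, if one also moves $v_{j^*}$ itself and trades the cut path edge for $\{v_{j^*-1},v_{j^*}\}$), a valid bisection is obtained whose cut consists of one path edge together with at most $d\leq \Delta(G)$ subtree-attachment edges. The main obstacle is the case in which no whole-subtree selection realizes $\delta$: then exactly one of the $U_k$ must be split internally into a $B$-piece and a $W$-piece of prescribed sizes. Here the doubling idea reappears on a smaller scale, and I would resolve it by a case distinction on the structure of that $U_k$: if $\diam^*(U_k) > 1/2$ then Lemma~\ref{lemmaTreeBase} applied inside $U_k$ splits it at a cost of at most $2$ extra cut edges; otherwise $U_k$ admits a low-degree separator vertex whose incident edges (at most $\Delta(G)$ of them) already suffice to carve off the required number of vertices. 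Summing the contributions — the single path edge from the walk along $P$, at most $d<\Delta(G)$ whole-subtree attachments, and a single internal split costing at most $\Delta(G)$ — a careful accounting (splitting further on whether $|T_{j^*}|$ is small relative to $\delta$, so that the greedy whole-subtree selection must fail only in bounded ways) yields that the overall cut width stays within $8\Delta(G)$, which is the explicit constant we are after.
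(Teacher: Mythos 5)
Your reduction to trees and your treatment of the case $\diam^*(G) > 1/2$ are fine, but the core of your argument has a genuine gap in the case $1/4 < \diam^*(G) \leq 1/2$, precisely at the point where a single hanging subtree $U_k$ must be split internally. You assert that when $\diam^*(U_k) \leq 1/2$, one can find a ``low-degree separator vertex whose incident edges (at most $\Delta(G)$ of them) already suffice to carve off the required number of vertices.'' This is false: a separator vertex in a tree only guarantees that every resulting component has at most half the vertices, so by combining components you can hit some target sizes but not an arbitrary prescribed one. To land on an exact count you must in general recurse, and that recursion goes $\Theta(\log |U_k|)$ levels deep. Concretely, if $U_k$ is a complete binary tree on $N$ vertices and you must carve off exactly $N/2$ of them, every valid cut has $\Omega(\log N)$ edges; no constant multiple of $\Delta$ will do. Since $T_{j^*}$ (and hence $U_k$) can contain a constant fraction of $V(G)$, and $\delta$ can be comparable, this is not a corner case — it is the main difficulty of the lemma.

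Your ``walk until you overshoot'' strategy is natural, but it does not exploit the hypothesis $\diam^*(G) > 1/4$ at the point where it matters. The paper instead labels vertices cyclically along the longest path, defines the antipole map $A(v) = v + n/2$, and does a global accounting over ``special'' path vertices to locate a set $Z = P_v \cupdot H_v$ that is simultaneously large enough to absorb the residual deficit and has relative diameter strictly above $1/2$, so that Lemma~\ref{lemmaTreeBase} can cut it exactly at cost at most $2$. The remaining slack is handled by an \emph{approximate} cut (Lemma~\ref{lemmaApproxCutTree}) inside $T_v'$, which achieves a prescribed fraction of a target size at cost $O(\Delta)$ without promising an exact count. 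Your proposal contains neither the antipole accounting that produces the high-diameter set $Z$, nor the relaxation to approximate cuts; both are needed. As written, the vague closing sentence about ``careful accounting ... bounded ways'' does not patch the hole.
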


Before proving Lemma~\ref{lemmaTreeBase2}, we present a tool, which is a simplified version of Lemma~\ref{lemmaApproxCut} for trees with a worse bound on the number of cut edges. If the number of cut edges is estimated more carefully, the bound~\(e_G(B,W) \leq \Bceil{ \log (1/(1-c)) } \Delta(G)\) is obtained. 

\begin{lemma}[Approximate Cut in Forests]
 \label{lemmaApproxCutTree} For every forest~\(G\) on~\(n\) vertices, for every~\(m \in [n]\), and for every~\(0<c<1\), there is a cut~\( (B,W)\) in~\(G\) that satisfies \(e_G(B,W) \leq \Bceil{2c/(1-c)}\Delta(G)\) and~\(cm < |B| \leq m\).
\end{lemma}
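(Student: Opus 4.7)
The plan is to construct $B$ by a greedy procedure that, at each step, adds to $B$ either an entire component of the remaining forest $G[V \setminus B]$ (when a small enough one exists) or a carefully chosen rooted subtree of some component. Fix an arbitrary rooting of each tree of $G$, and for a vertex $v$ in a rooted tree $C$ write $T_v$ for its subtree in $C$. Starting from $B := \emptyset$, iterate while $|B| \leq cm$: set $r := m - |B|$; if $G[V \setminus B]$ has a component $C$ with $|V(C)| \leq r$, add $V(C)$ to $B$ (a \emph{whole-component} iteration); otherwise every component has more than $r$ vertices, so pick any component $C$, descend from its root always to the child whose subtree in $C$ is largest, stop at the first vertex $v$ with $|T_v| \leq r$, and add $V(T_v)$ to $B$ (a \emph{subtree} iteration). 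Since each addition has size at most $r$, the invariant $|B| \leq m$ is preserved, and the loop terminates with $|B| \in (cm, m]$.

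For the cut-edge bound, whole-component iterations produce no new cut edge: as $C$ is a component of $G[V \setminus B]$, every edge leaving $C$ goes to $B$, so only previously cut $C$-to-$B$ edges become non-cut. A subtree iteration creates exactly one new cut edge, namely the edge from $v$ to its parent $u$ in $C$; all other edges leaving $T_v$ again go to $B$ and become non-cut. Hence the number of cut edges at termination is at most the number $s$ of subtree iterations.

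To bound $s$, note that in any subtree iteration $|T_v| \geq r/\Delta(G)$: the parent $u$ has $|T_u| > r$ and at most $\Delta(G)$ children in $C$, whose subtrees partition $|T_u| - 1 \geq r$ vertices, so the largest of them (which is $T_v$) has at least $r/\Delta(G)$ vertices. Consequently each subtree iteration shrinks $r$ by a factor of at most $(\Delta(G)-1)/\Delta(G)$, and whole-component iterations only shrink it further. Using $(1-1/\Delta)^\Delta \leq 1/e$, after $k \cdot \Delta(G)$ subtree iterations one has $r \leq m \cdot e^{-k}$. Choosing $k := \lceil 2c/(1-c) \rceil$ and using $\ln(1/(1-c)) \leq c/(1-c) < 2c/(1-c) \leq k$ yields $e^{-k} < 1-c$, so $r < (1-c)m$ and hence $|B| > cm$, meaning the algorithm must already have terminated. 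Therefore $s \leq \lceil 2c/(1-c) \rceil \cdot \Delta(G)$.

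The main subtlety to verify is the one-new-cut-edge bookkeeping across iterations, since after each addition the remaining forest $G[V \setminus B]$ changes and so does its component and rooted-subtree structure; one has to track carefully that edges from a newly added $T_v$ into the growing $B$ really do cancel the cut edges they used to contribute, so that each subtree iteration's net contribution to the cut count is at most one. The remaining ingredients, the geometric decrease via $(1-1/\Delta)^\Delta \leq 1/e$ and the elementary inequality $\ln(1/(1-c)) \leq c/(1-c)$, are standard.
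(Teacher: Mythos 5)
Your proof is correct, and it reaches the same bound by a genuinely different accounting. The paper roots a single tree, descends to the deepest vertex~\(x\) with~\(|V(T_x)|>m_s\) (so all its children~\(y\) have~\(|V(T_y)|\leq m_s\)), then removes \emph{all} edges at~\(x\) at once and greedily packs several of the children's subtrees into~\(\tilde B_s\) so that~\(m_s/2 < |\tilde B_s| \leq m_s\); each round therefore costs up to~\(\Delta(G)\) cut edges but at least halves the remaining deficit~\(m_s\), giving the bound~\(s^* \leq \Bceil{2c/(1-c)}\) on the number of rounds by a short arithmetic computation. You instead descend one level further, to the first vertex~\(v\) with~\(|T_v|\leq r\), take just that single subtree, and pay only one new cut edge per step; in exchange you only reduce the deficit by a factor of~\(1-1/\Delta(G)\), so you need the analytic inequalities~\((1-1/\Delta)^\Delta \leq 1/e\) and~\(\ln(1/(1-c)) \leq c/(1-c)\) to bound the iteration count by~\(\Bceil{2c/(1-c)}\Delta(G)\). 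The two trade-offs cancel exactly, and your net-one-cut-edge bookkeeping (the only new cut edge is~\(\{v,\text{parent}(v)\}\) since~\(T_v\) meets the rest of its component only there, while edges from~\(T_v\) back into~\(B\) are uncut afterwards) is sound --- this is the subtlety you flagged, and it does go through because the inherited rooting of a component~\(C\) survives deletion of the subtree~\(T_v\) and~\(C\setminus T_v\) remains a single rooted tree. A further small difference is that you handle forests directly via the whole-component iterations, whereas the paper proves the tree case and remarks that the forest case is an easy adaptation.
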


\begin{proof}
  Fix an integer~\(m \in [n-1]\) and a tree~\(T\) on~\(n\) vertices with~\(\Delta(T) \geq 3\). The proof is easy to adapt to forests and trivial when~\(\Delta(T) \leq 2\) or~\(m=n\). Root~\(T\) at an arbitrary vertex~\(r\), and for~\({x\in V(T)}\), denote by~\(T_x\) the subtree rooted at~\(x\). By starting at the root and repeatedly descending to the child with the largest subtree, we can find a vertex~\(x\) with~\( |V(T_x)| > m\) and~\( |V(T_{y})| \leq m\) for all children~\(y\) of~\(x\). Using the sets~\(V(T_y)\), one can construct a set~\(\tilde{B}_1\) for a cut~\( (\tilde{B}_1, \tilde{W}_1)\) in~\({T_1:=T}\) such that~\(T_1[\tilde{W}_1]\) is connected, \(e_{T_1} (\tilde{B}_1,\tilde{W}_1) \leq \deg(x)\), and~\({m_1/2 < |\tilde{B}_1| \leq m_1}\) where~\({m_1 := m}\). For~\({s \geq 2}\) define~\(B_{s-1} := \tilde{B}_1 \cupdot \cdots \cupdot \tilde{B}_{s-1}\) as well as~\(m_s := m-|B_{s-1}|\) and apply the same strategy recursively to~\(T_{s}:=T[\tilde{W}_{s-1}]\). Let~\(s^*\) be the smallest~\(s\) for which~\(|B_s| > cm\). For all~\(s \in [s^*]\), we have~\(|\tilde{B}_s| >  m_s /2 \geq m_{s^*} /2 \geq (1-c)m/2 \) and hence~\(s^* \leq \Bceil{cm / ((1-c)m/2)} = \Bceil{ 2c/(1-c)}\), which shows the desired bound on~\(e_T(B,W)\) for~\(B:= B_{s^*}\) and~\(W:= V(T) \setminus B\).
\end{proof}

\begin{figure}
 \mbox{\includegraphics{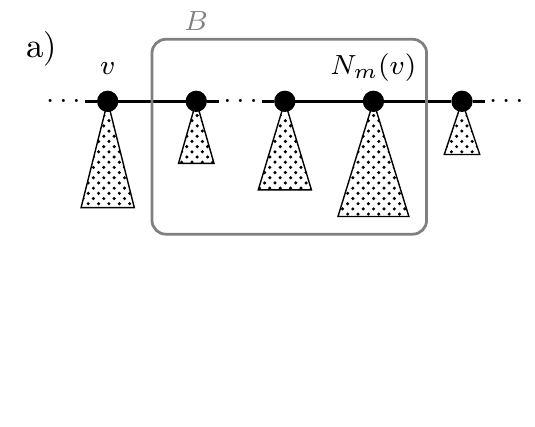}\includegraphics{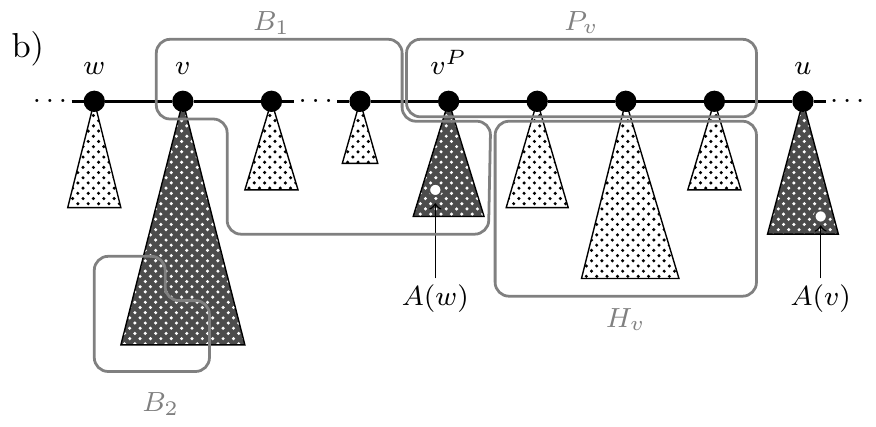}}
 \caption{Proof of Lemma~\ref{lemmaTreeBase} and Lemma~\ref{lemmaTreeBase2}. In both parts, the path~\(P\) is drawn on the top and the trees~\(T_x\) for~\(x \in V_P\) are hanging down from the path~\(P\). a)~Lemma~\ref{lemmaTreeBase}.  b)~Lemma~\ref{lemmaTreeBase2}: Notation and construction of the bisection in Case~2. A tree~\(T_y\) is colored dark gray if it contains a vertex~\(x\) with~\(A(x) \in V_P\), i.e., if the vertex~\(y\) is special.}
 \label{figCutTree}
\end{figure}

\begin{proof}[Proof of Lemma~\ref{lemmaTreeBase2}]
Let~\(G=(V,E)\) be such a forest and denote by~\(n\) its number of vertices. First, note that we may assume that~\(\Delta(G) \geq 3\) and~\(n \geq 4\) as otherwise~\(\diam^*(G) > 1/2\) and Lemma~\ref{lemmaTreeBase} applies. Moreover, it suffices to consider the case in which~\(G\) is a tree, as otherwise we can add edges until~\(G\) is connected without affecting the relative diameter. Denote by~\(P=(V_P,E_P)\) a longest path in~\(G\) and note that~\(|V_P| > n/4\). Define~\(T_v\) for all~\(v\in V_P\), \(x_0\), \(y_0\), and the vertex labeling as in the proof of Lemma~\ref{lemmaTreeBase}. Furthermore, let~\(T_v' := V(T_v)\setminus\{v\}\) for all~\(v\in  V_P\) and denote by~\(A(v) := v + n/2\) the~\emph{antipole} of~\(v\) for each vertex~\(v \in V\). 

\textbf{Case~1:} There is a vertex~\(v \in V_P\) with~\(A(v) \in V_P\). 

Then choosing~\(B:=\{v+1, \ldots, v+n/2\}\) and~\(W:=V \setminus B\) gives a bisection of width at most~\(2\), which is similar to the cut used in Lemma~\ref{lemmaTreeBase}.

\pagebreak[3]

\textbf{Case~2:} For every~\(v \in V_P\), we have~\(A(v) \not\in V_P\).

First, for sets~\(U \subseteq V\), let~\(A(U) := \{ A(u): u \in U\}\). We call a vertex~\(v \in V_P\) \emph{special} if there is a vertex~\(x \in T_v'\) with~\(A(x) \in V_P\). For every special vertex~\(v \in V_P\), define~\(P_v := A(T_v') \cap V_P\). Note that~\(P_v\) induces a path or a cycle in the graph obtained from~\(G\) by adding the edge~\( \{x_0, y_0\}\), and that~\( \{P_v : \text{\(v \in V_P\) is special}\}\) is a partition of~\(V_P\). Also, for every special vertex~\(v \in V_P\), let~\(v^P = A(x)\) for the smallest vertex~\(x \in T_v'\) with~\({A(x) \in V_P}\) and let~\(H_v\) be the union of the sets~\(T_{x}'\) for all~\(x \in P_v\) with~\(x \neq v^P\). See also Figure~\ref{figCutTree}b) for a visualization of these definitions. By construction we have
 \begin{align} 
 \label{proofLemmaBase2_Inclustion}
 \left| T_v ' \right| \geq \left|P_v \vphantom{'} \right| + \left| H_v \vphantom{'} \right|
\end{align}  
for all special~\(v\in V_P\) as well as
 \begin{align} 
 \label{proofLemmaBase2_Counting} 
 \sum_{\substack{v \in V_P: \\ \text{\(v\) is special}}} \left|P_v \right| \; = \; \left| V_P \right| \; > \; \frac{1}{4} \ n  \quad \quad \text{ and } \quad  \frac{3}{4}\ n \; >  \ \left| V \setminus V_P \vphantom{T_{v^P}'} \right| \ = \ \sum_{\substack{v \in V_P: \\ \text{\(v\) is special}}} \left( \left|H_v \vphantom{_{v^P}'} \right| + \left|T_{v^P} '\right|\right).
\end{align}  
Using that~\(A(A(x)) = x\), one can show that a vertex~\(v \in V_P\) is special if and only if there is a special vertex~\(w \in V_P\) such that~\(v = w^P\). Therefore,

\[ \sum_{ \substack{w\in V_P: \\ \text{\(w\) is special}}} \left| T_{w^P} ' \right| \; \;= \; \sum_{\substack{v\in V_P: \\ \text{\(v\) is special }}} \left| T_v ' \vphantom{T_{w^P} '}\right|, \] 
which together with~\eqref{proofLemmaBase2_Counting} implies
 \begin{align*}
 3 \sum_{\substack{v \in V_P: \\ \text{\(v\) is special}} } \left| P_v \right| \; \;  > \; \; \frac{3}{4}n \; \; > \; \;  \sum_{\substack{v \in V_P: \\ \text{\(v\) is special}} } \left( \left| H_v \vphantom{T_{v^P}'} \right| + \left|T_{v^P}'\right| \right) \; \;  = \; \;  \sum_{\substack{v \in V_P: \\ \text{\(v\) is special}} } \left( \left| H_v \vphantom{T_{v}'} \right| + \left|T_{v}' \right| \right). 
\end{align*}  
Consequently, there is a special vertex~\(v \in V_P\) such that
 \begin{align} 
  \label{proofLemmaBase2_SpecialVertex}
  \left|T_v' \right| + \left|H_v \vphantom{T_v'} \right| < 3 \ \left|P_v \right| .
\end{align}  
Replacing~\(|T_v'|\) with~\eqref{proofLemmaBase2_Inclustion}, we obtain \( |P_v| + 2|H_v| < 3 |P_v|\), or equivalently \( |P_v| + |H_v| <  2 |P_v|\). Therefore, the set~\(Z := P_v  \cupdot H_v\) satisfies~\(|Z| < 2 |P_v|\) and also \(Z \neq \emptyset\)  because~\(P_v \neq \emptyset\) as~\(v\) is special. This implies that \(\diam^*(G[Z]) \geq |P_v| / |Z| > 1/2\). The graph~\(G[Z]\) is useful as we can apply Lemma~\ref{lemmaTreeBase} to cut off exactly as many vertices as we like while cutting few edges, but it is too small to cut off the entire set~\(B\) from~\(G[Z]\). 

Next, we will use the vertex~\(v\) and the set~\(Z\) to construct the set~\(B\) for the bisection from three disjoint parts~\(B_1 \subseteq V\setminus (T_v ' \cup Z) \), \(B_2 \subseteq T_v'\), and~\(B_3 \subseteq Z\), see also Figure~\ref{figCutTree}b) for a visualization of the following definitions. If~\(v = v^P\), let~\(B_1 := \emptyset\), and otherwise let~\(B_1 := \{v, v+1, \ldots, v^P -1\}\). As~\(|B_1 \cup Z| < m\) might happen, we cut off a set~\(B_2\) from~\(T_v'\) to ensure that~\(|B_1 \cup B_2 \cup Z| \geq m\). To do so, define~\(m_2 := n/2 - |B_1|\) and observe that~\(1 \leq m_2 \leq |T_v '|\) as~\(v^P -n/2\) is in~\(T_v'\). Next, we apply Lemma~\ref{lemmaApproxCutTree} to~\(G[T_v']\) with parameters~\(m_2\) and~\(c= 2/3\) to obtain a cut~\((B_2, W_2)\) in~\(G[T_v']\) with~\(2 m_2 /3 \leq |B_2| \leq m_2\) and~\(e_{G[T_v']} (B_2, W_2) \leq 4 \Delta(G) \). Let~\(m_3 := n/2 -|B_1| - |B_2|\) and note that~\(B_1\), \(B_2\), and~\(Z\) are pairwise disjoint. Then,
 \begin{align*}
 m_3 \ = \ (\tfrac{1}{2}n -|B_1|) - |B_2| \ \leq \ m_2 - \tfrac{2}{3} m_2 \ \leq \ \tfrac{1}{3} |T_v'| \ < \ |P_v| \ \leq \ |Z|,
\end{align*}  
where we used~\eqref{proofLemmaBase2_SpecialVertex} to derive the second to last inequality. If~\(m_3 =0\), let~\(B_3 := \emptyset\). Otherwise, it is feasible to apply Lemma~\ref{lemmaTreeBase} with parameter~\(m_3\) to~\(G[Z]\) to obtain a cut~\( (B_3,W_3)\) in~\(G[Z]\) with~\(e_{G[Z]} (B_3,W_3) \leq 2 \leq \Delta(G)\) and~\(|B_3| = m_3\). Defining~\(B:= B_1 \cupdot B_2 \cupdot B_3\) and~\(W := V\setminus B\), we obtain a bisection in~\(G\). 

Next, we estimate the number of edges cut by~\( (B,W)\) in~\(G\). Let~\( \tilde{V} := V \setminus (T_v' \cup B_1 \cup Z )\) and denote by~\(u \in V_P\) the vertex with~\(A(v) \in T_u'\). The cut~\( (T_v', B_1, Z, \tilde{V})\) cuts only edges incident to~\(v\), \(v^P\), and~\(u\), i.e., at most~\(3 \Delta(G)\) edges. Using the bounds on the number of cut edges of the cut~\( (B_2, W_2)\) in~\(G[T_v']\) and the cut~\( (B_3, W_3)\) in~\(G[Z]\), the desired bound on~\(e_G(B,W)\) is obtained.
\end{proof}

\subsection{Proof Sketch for Theorem~\ref{thmTreeCutSpecSizes}}
\label{subsecTreeThm}

Although the minimum bisection problem asks for a vertex partition into two classes of (almost) the same size, we need to take a more general approach and consider partitions where the sizes of the classes can be specified by an input parameter~\(m\) in order to be able to apply induction. The following lemma is the heart of the induction to prove Theorem~\ref{thmTreeCutSpecSizes}. Its main idea of doubling the relative diameter also appears in the proof of Lemma~\ref{lemmaTreeBase2}: If we cannot easily find a cut~\((B,W)\) in~\(G\) with~\(|B| =m \) and~\(e_G(B,W) \leq 2\), more precisely, if Case~1 does not apply, then we construct a subgraph~\(G[Z]\) with~\(\diam^*(G[Z]) \geq 2 \diam^*(G)\). 

\begin{lemma}
  \label{lemmaDoubleDiam}
  For every forest~\(G\) on~\(n\) vertices and for every~\( m \in [n]\), the vertex set of~\(G\) can be partitioned into three classes~\(B\),~\( W\), and~\(Z\) such that one of the following two options occurs: 
  \begin{enumerate}[1)]
    \item\label{lemmaDoubleDiamOpt1}  \( \left| B \right| =m \), \ \(Z=\emptyset\), \ and \ \( e_G (B, W, Z) \leq 2\), or
    \item\label{lemmaDoubleDiamOpt2}   \( \left| B \right| \leq m \leq \left| B \right| + \left| Z \right|\), \  \(0 < |Z| \leq n/2\),  \ \( e_G (B, W, Z) \leq 2 \Delta(G) / \diam^* (G) \), \ as well as\  \( \diam ^* (G [Z]) \geq 2 \diam^*(G)\). 
  \end{enumerate}
\end{lemma}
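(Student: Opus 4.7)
The plan is to generalize the Case~1 / Case~2 analysis from the proof of Lemma~\ref{lemmaTreeBase2}, replacing the target size $n/2$ by the flexible $m$ and tightening the averaging step to track the relative diameter $d := \diam^*(G)$ precisely. As a preliminary reduction, I assume $G$ is a tree (adding edges to connect components leaves $d$ unchanged), fix a longest path $P$ with $|V_P| = dn$, and reuse the cyclic vertex labeling from the proof of Lemma~\ref{lemmaTreeBase}. I redefine the antipole as $A(v) := v + m$ modulo~$n$. If some $v \in V_P$ satisfies $A(v) \in V_P$, the choice $B := \{v+1, \ldots, v+m\}$, $W := V \setminus B$, $Z := \emptyset$ cuts only the two path edges at the interval's endpoints, yielding option~(1).

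Otherwise $A(V_P) \cap V_P = \emptyset$, which forces $|V_P| \le n/2$ and hence $d \le 1/2$. In this regime I recycle the notions of \emph{special} vertex, $P_v$, $v^P$, and $H_v$ from the proof of Lemma~\ref{lemmaTreeBase2}. The crucial sharpening is in the averaging: combining $\sum_{v \text{ special}} |P_v| = dn$ with $\sum_{v \text{ special}} (|T_v'| + |H_v|) = (1-d)n$ (via the bijection $v \mapsto v^P$ on special vertices used in the paragraph after~\eqref{proofLemmaBase2_Counting}) with the inequality~\eqref{proofLemmaBase2_Inclustion} yields
\[ \sum_{v \text{ special}} \bigl( |P_v| + |H_v| \bigr) \ \le \ \tfrac{1}{2} n. \]
Dividing by $\sum |P_v| = dn$ furnishes a special $v$ with $|P_v| + |H_v| \le |P_v|/(2d)$. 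Setting $Z := P_v \cup H_v$, the bound $|P_v| \le dn$ combined with $|Z| \le |P_v|/(2d)$ forces $|Z| \le n/2$, and $\diam^*(G[Z]) \ge |P_v|/|Z| \ge 2d$, which establishes the geometric requirements of option~(2).

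It remains to split $V \setminus Z$ into $B$ and $W$ with $|B| \le m \le |B|+|Z|$ and a small cut. In the cyclic labeling, $\bigcup_{x \in P_v} T_x$ occupies one consecutive arc, and $Z = \bigcup_{x \in P_v} T_x \setminus T_{v^P}'$ differs from this arc only by the removal of the sub-arc $T_{v^P}'$; accordingly, $V \setminus Z$ consists of one or two consecutive arcs. I would take $B$ to be a maximal consecutive sub-arc of $V \setminus Z$ of size $\min(m, n - |Z|)$ starting right after the $Z$-arc containing the largest label of $P_v$, and set $W := V \setminus (B \cup Z)$; the size constraints then follow directly. For the cut count, the boundary of $Z$ contributes at most $\Delta + 2$ edges (the up to two path edges at the extremes of $P_v$ plus at most $\Delta$ edges incident to $v^P$ crossing into $T_{v^P}'$), while the single $B/W$ split adds at most $\Delta$ further edges. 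Since $d \le 1/2$, one has $2\Delta/d \ge 4\Delta \ge 2\Delta + 2$, which suffices.

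The main obstacle will be the bookkeeping of this last step: distinguishing the sub-cases where $v^P$ coincides with the smallest or largest element of $P_v$ versus lies in its interior (this affects whether $V \setminus Z$ is one arc or two), and confirming the size constraints uniformly when $m \le n - |Z|$ and when $m > n - |Z|$. The sharpened averaging is the conceptually new ingredient; the Case~1 pigeonhole and the cut-counting follow the same logic as in Lemma~\ref{lemmaTreeBase2}.
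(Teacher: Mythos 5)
Your high-level plan (Case~1 when some $v\in V_P$ satisfies $N_m(v)\in V_P$; otherwise average over special vertices to find a good $P_v$, set $Z=P_v\cup H_v$, and build $B$ from an arc plus a piece of $T_v'$) matches the paper's sketch in Section~\ref{subsecTreeThm}. However, the two steps you gloss over as ``bookkeeping'' are exactly where the paper has to work harder, and as written your argument has two genuine gaps.

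First, the bijection $v\mapsto v^P$ on special vertices does not exist when $m\ne n/2$, so the identity $\sum_{v\text{ special}}(|T_v'|+|H_v|)=(1-d)n$ is unjustified. In the proof of Lemma~\ref{lemmaTreeBase2} the step $\sum_w |T_{w^P}'|=\sum_v|T_v'|$ relies on $A(A(x))=x$, i.e.\ on the antipole map being an involution. For $N_m$ with $m\ne n/2$ this fails, and the paper explicitly flags it: ``$N_m(N_m(v))=v$ is not necessarily true anymore\dots when a vertex $v$ is special with respect to $N_m$, there is not necessarily a vertex $w$ with $v=w^P$.'' The fix used by the paper is to split ``special'' into \emph{b-special} and \emph{f-special} (and $P_v^b,H_v^b,P_v^f,H_v^f$). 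The relevant bijections then go from b-special to f-special vertices and back, not from ``special'' to ``special''; the averaging has to be carried out over both families simultaneously, yielding either a b-special or an f-special $v$ with the needed inequality. Your single-sided accounting silently assumes the two families coincide, which only happens when $m=n/2$.

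Second, your cut bound for the $B/W$ split is wrong. You form $B$ as a consecutive sub-arc of $V\setminus Z$ of a prescribed size and claim the resulting $B/W$ boundary contributes at most $\Delta$ edges. But cutting the cyclic labeling at an arbitrary position that lies \emph{inside} some $T_w'$ can cut far more than $\Delta$ edges: the labeling within $T_w$ is only required to be consecutive with $w$ largest, so (for instance) a complete binary tree $T_w$ with all leaves labeled before the internal nodes gives a cut of $|T_w|/2\gg\Delta$ edges at the ``middle'' of the arc. The cheap cuts in Lemma~\ref{lemmaTreeBase} occur precisely because both arc endpoints lie in $V_P$, in which case only path edges are cut. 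The paper therefore does not take $B$ to be one long arc; it takes $B=B_1\cup B_2$ with $B_1=\{v,\dots,v^P-1\}$ (an arc whose boundary is controlled by edges incident to $v$, $v^P$, and one more path vertex) and $B_2\subseteq T_v'$ built via the Approximate Cut lemma, Lemma~\ref{lemmaApproxCutTree}, with a parameter $c$ chosen as a function of $d$. That lemma is exactly what bounds the number of edges cut \emph{inside} $T_v'$, and your proposal omits it entirely.

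So: the geometric part of your Case~2 (defining $Z$, getting $\diam^*(G[Z])\ge 2d$ and $|Z|\le n/2$) follows the paper's route correctly once the b/f bookkeeping is fixed, but both the averaging identity and the cut count for $B$ need the repairs above; without Lemma~\ref{lemmaApproxCutTree} there is no way to certify the claimed bound $2\Delta/d$.
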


The lemma says that we can either find a partition into two sets~\(B\) and~\(W\) with exactly the right cardinality by cutting very few edges, or there is a partition with an additional set~\(Z\) such that the set~\(B\) is smaller and the set~\(B\cupdot Z\) is larger than the required size~\(m\), as well as the additional feature that the relative diameter of~\(G[Z]\) is at least twice as large as that of~\(G\). Applying Lemma~\ref{lemmaDoubleDiam} recursively to the graph~\(G':=G[Z]\) with parameter~\(m':=m-|B|\), the relative diameter is doubled in each round, until it exceeds~\(1/2\) and we can complete the proof by applying Lemma~\ref{lemmaTreeBase}. Note that as the relative diameter of~\(G\) increases, the bound on the number of cut edges in Option~\ref{lemmaDoubleDiamOpt2} decreases. 

Let us conclude this section with a few words on how to generalize the proof of Lemma~\ref{lemmaTreeBase2} to obtain Lemma~\ref{lemmaDoubleDiam}. We use the same assumptions and notation as in the proof of Lemma~\ref{lemmaTreeBase2}. Here, we consider cuts~\((B,W)\) with~\(|B| = m\) for some input parameter~\(m\). So, instead of working with the antipole of a vertex, we use~\(N_m(v) := v+m\). If there is a vertex~\(v \in V_P\) with~\(N_m(v) \in V_P\), we use the cut with~\(B=\{v+1, \ldots, v+m\}\), similar to Case~1 in the proof of Lemma~\ref{lemmaTreeBase2}. So from now on, assume that this is not the case. Note that~\(N_m(N_m(v)) =v\) is not necessarily true anymore. Also, when a vertex~\(v\) is special with respect to~\(N_m\), there is not necessarily a vertex~\(w\) with~\(v= w^P\). To overcome this, we work with a forward and a backward version of being special. We say a vertex~\(v \in V_P\) is \mbox{\emph{b-special}} if there is a vertex~\(x\in T_v'\) with~\({x-m \in V_P}\) and~\(v\) is \mbox{\emph{f-special}} if there is a vertex~\(x \in T_v'\) with~\(x+m \in V_P\). We use b as in backward and f as in forward, because going~\(m\) steps backward and~\(m\) steps forward from~\(x\) gives a vertex in~\(R\), respectively. Also, we define a forward and a backward version of~\(P_v\) and~\(H_v\), called~\(P_v^b\), \(P_v^f\), \(H_v^b\), and \(H_v^f\). The accounting, which still uses the same ideas, becomes more involved and shows that one of the following must exist:
\begin{enumerate}[a)]
  \item a \mbox{b-special} vertex~\(v \in V_P\) with~\( |T_v'| + |H_v ^b| \leq \left(\tfrac{1}{d}-1 \right) |P_v^b|\), or
  \item an \mbox{f-special} vertex~\(v \in V_P\) with~\( |T_v' | + |H_v^f| \leq \left(\tfrac{1}{d}-1\right) |P_v^f|\),
\end{enumerate}
where~\(d:= \diam^*(G)\). The construction of the cut~\((B,W,Z)\) in each case is then similar to the cut~\( (B_1 \cup B_2, W', Z)\) in Lemma~\ref{lemmaTreeBase2}, where~\(W':= V \setminus (B_1 \cup B_2 \cup Z)\). We define~\(Z:= P_v^b \cupdot H_v^b\) or~\(Z:= P_v^f \cupdot H_v^f\) and show that~\(\diam^*(G[Z]) \geq 2d\). To obtain the set~\(B_2 \subseteq T_v'\), we apply Lemma~\ref{lemmaApproxCutTree} with a parameter~\(c\) that depends on~\(d\), to match the inequalities derived from the accounting. 

Before going on with general graphs and tree decompositions, let us quickly discuss some algorithmic aspects of Theorem~\ref{thmTreeCutSpecSizes}. To implement Lemma~\ref{lemmaDoubleDiam}, a longest path in a tree can be computed in linear time using a procedure similar to the one in~\cite{Handler}, where a center vertex of a tree is computed, which is in the middle of a longest path in a tree. Lemma~\ref{lemmaDoubleDiam} can be implemented in a way that the cut~\((B,W,Z)\) and the subgraph~\(G[Z]\), if Option~\ref{lemmaDoubleDiamOpt2} occurs, are computed in~\( \bigO(n)\) time. As the number of vertices in the considered graph shrinks by at least~\(1/2\) in each round, a bisection satisfying the bound in Theorem~\ref{thmTreeCutSpecSizes} can be computed in linear time.

\section{Preliminaries for Tree Decompositions and Proof of Lemma~\ref{lemmaApproxCut}}
\label{secPerlTreeDec}

In this section, we present some preliminaries for tree decompositions and discuss basic properties that allow us to generalize the ideas used for trees in the previous section. Furthermore, the existence of a cut with the properties in Lemma~\ref{lemmaApproxCut} is presented. A discussion of how to implement the corresponding algorithm can be found in Appendix~\ref{subsubsecImplApproxCut}.

\subsection{Tree Decompositions and Cuts}
\label{subsecTreeDecCuts}

Let us start with the formal definition of a tree decomposition.

\begin{defi}\label{defTreeDec}
  Let~\(G\) be a graph, \(T\) be a tree, and~\(\X= (X^i)_{i \in V(T)}\) with~\(X^i \subseteq V(G)\) for every~\({i \in V(T)}\). The pair~\((T,\X)\) is a \emph{tree decomposition} 
  of~\(G\) if the following three properties 
  hold.
  \begin{enumerate}[(T1), leftmargin= \IdentationTDConditions]
    \item For every~\(v \in V(G)\), there is some~\(i \in V(T)\) such that~\(v \in X^i\).
    \item For every~\(e \in E(G)\), there is some~\(i \in V(T)\) such that~\(e \subseteq X^i\). 
    \item For all~\(i, j \in V(T)\) and all~\(h \in V(T)\) on the (unique)~\(i\text{,}j\)-path in~\(T\), we have~\(X^i \cap X^j \subseteq X^h\). 
  \end{enumerate}  
  The \emph{width} of~\((T,\X)\) is defined as \(\max \{|X^i|-1 : i \in V(T)\}\). 
  The \emph{tree-width}~\(\tw(G)\) of~\(G\) is the smallest integer such that~\(G\) allows a tree decomposition of width~\(\tw(G)\). 
\end{defi}

Consider a graph~\(G=(V,E)\) and a tree decomposition~\( (T,\X)\) with~\( \X= (X^i)_{i\in V(T)}\) of~\(G\). To distinguish the vertices of~\(G\) from the vertices of~\(T\) more easily, we refer to the vertices of~\(T\) as \emph{nodes}. Furthermore, for~\(i \in V(T)\), we refer to the set~\(X^i\) as the \emph{cluster} of~\( (T,\X)\) that corresponds to the node~\(i\), or simply the cluster of~\(i\) when the tree decomposition is clear from the context. It is easy to show that (T3) is equivalent to the following condition.
\begin{enumerate}[\emph{(T3')}, leftmargin= \IdentationTDExtraCondition]
 \item \emph{For every~\(v \in V\), the graph~\(T[I_v]\) is connected, where~\(I_v := \{i \in V(T) : v \in X^i\}\).}
\end{enumerate}

Consider a graph~\(G_0\) and a tree decomposition~\((T_0,\X_0)\) with~\(\X_0 = (X_0^i)_{i \in V(T_0)}\). In order to reapply a procedure to a subgraph~\(G \subseteq G_0\), it is necessary to construct a tree decomposition of~\(G\), which we do in the following way. For a tree~\(T\) with~\(V(T)\subseteq V(T_0)\) and~\(\X= (X^i)_{i\in V(T)}\), we call~\((T,\X)\) the \emph{restriction of~\((T_0,\X_0)\) to~\(T\) and~\(G\)} if~\(X^i = X_0^i \cap V(G)\) for all~\(i\in V(T)\). Note that the restriction of~\((T_0,\X_0)\) to~\(T\) and~\(G\) is not necessarily a tree decomposition of~\(G\), but it will be if we choose~\(T\) and~\(G\) well. For example, it is easy to see that if we choose~\(T=T_0\) and use an arbitrary subgraph~\(G\subseteq G_0\), then the restriction of~\((T_0,\X_0)\) to~\(T\) and~\(G\) is always a tree decomposition of~\(G\). Observe that the width and the size of the restriction of~\((T_0,\X_0)\) to~\(T\) and~\(G\) are at most the width and the size of~\((T_0,\X_0)\), respectively. Usually some clusters of a restriction of~\((T_0,\X_0)\) are empty, which can be avoided with the following concept. A tree decomposition~\( (T,\X)\) of a graph~\(G\) with~\( {\X = (X^i)_{i \in V(T)} } \) is called~\emph{nonredundant} if~\(X^i \not \subseteq X^j\) and~\(X^j \not \subseteq X^i\) for every edge~\( \{i,j\}\) in~\(T\). It is easy to see that any tree decomposition~\((T,\X)\) can be made nonredundant by contracting edges of~\(T\) without increasing its width.

In Section~\ref{secProofSketchTrees}, where we constructed a bisection in a tree~\(\tilde{T}\), we used the following cuts to partition the vertex set of~\(\tilde{T}\) and subtrees of~\(\tilde{T}\) in the proofs of Lemma~\ref{lemmaTreeBase2} and Lemma~\ref{lemmaApproxCutTree}. For some vertex~\(v \in V(\tilde{T})\) we removed all edges incident to~\(v\) and combined the vertex sets of the resulting connected components to obtain a cut in~\(\tilde{T}\). Each time such a construction was used, at most~\(\deg(v) \leq \Delta(\tilde{T})\) edges were cut. This can generalized by considering clusters of a tree decomposition, as done in the next lemma. It uses the following notation: Consider a graph~\(G\) and a tree decomposition~\((T,\X)\) of~\(G\). For each node~\(i\) in~\(T\) we denote by~\(E_{G}(i)\) the set of edges~\(e \in E(G)\) such that~\(e \cap X^i \neq \emptyset\), where~\(X^i\) is the cluster of~\(i\). Furthermore, define~\(e_G(i) := |E_G(i)|\) for every~\(i \in V(T)\) and note that~\(e_G(i) \leq t \Delta(G)\) for every~\(i \in V(T)\), where~\(t-1\) denotes the width of~\( (T,\X)\).  We say that two subgraphs~\(H_1\subseteq G\) and~\(H_2 \subseteq G\) are \emph{disjoint parts} of~\(G\) if~\(V(H_1) \cap V(H_2)= \emptyset\) and there is no edge~\(e=\{x,y\}\) in~\(G\) with~\(x \in V(H_1)\) and~\(y \in V(H_2)\). Note that, if~\(G\) is not connected, then two distinct connected components of~\(G\) are disjoint parts of~\(G\), but the subgraph~\(H_i\) for~\(i \in \{1,2\}\) in the definition of disjoint parts does not have to be connected. The next lemma says that if we remove the edges in~\(E_G(i)\) for some~\(i \in V(T)\), then the graph~\(G\) splits into several disjoint parts. So we can combine these disjoint parts in an arbitrary way to obtain a cut in~\(G\) that cuts at most~\(e_G(i) \leq t \Delta(G) \) edges. The lemma is a widely known fact about tree decompositions, whose proof can be found in~\cite{KleinbergTardos} for example.

\begin{lemma}
  \label{lemmaRemoveClusterEdges}
  Let~\(G=(V,E)\) be an arbitrary graph and let~\( (T, \X)\) be a tree decomposition of~\(G\) with~\({ \X = (X^i)_{i \in V(T)} } \). Fix an arbitrary node~\(i \in V(T)\), let~\(k := \deg_T (i) \) and denote by~\( i_1, i_2, \ldots, i_k\) the neighbors of~\(i\) in~\(T\). Furthermore,  for~\( \ell \in [k]\), let~\(V_{\ell}^T\) be the node set of the component of~\(T-i\) that contains~\(i_\ell\). Removing the edges in~\(E_G(i)\) from~\(G\) decomposes~\(G\) into~\(k + |X^i| \) disjoint parts, which are~\( (\{v\}, \emptyset) \) for every~\(v \in X^i\) and~\(G[V_\ell] \) for every~\( \ell \in [k]\), where~\( V_\ell \ := \bigcup_{j \in V_\ell ^T} X^j \setminus X^i\).
\end{lemma}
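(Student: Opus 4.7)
The plan is to verify two things: first, that the proposed parts indeed partition $V(G)$, and second, that after deleting $E_G(i)$, no edge of $G$ has endpoints in two different parts. The result is purely structural, so I would not attempt any counting or optimization; it is just a careful application of the tree decomposition axioms, in particular the equivalent formulation (T3').

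For the partition claim, I would argue as follows. By (T1), every vertex $v \in V(G)$ lies in at least one cluster, so the set $I_v := \{j \in V(T) : v \in X^j\}$ is nonempty. If $v \in X^i$, then $v$ belongs to the singleton part $(\{v\},\emptyset)$ and, by construction, to none of the $V_\ell$ (which explicitly exclude $X^i$). If $v \notin X^i$, then $i \notin I_v$, so by (T3') the subtree $T[I_v]$ is connected and disjoint from $\{i\}$, hence entirely contained in a single component of $T - i$, say the one with vertex set $V_\ell^T$. Then $v \in V_\ell$, and the connectivity of $T[I_v]$ also prevents $v$ from lying in two different $V_\ell$'s. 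Disjointness from the singleton parts is immediate since $v \notin X^i$.

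For the edge claim, take any edge $e = \{x,y\} \in E(G) \setminus E_G(i)$. By definition of $E_G(i)$, neither $x$ nor $y$ belongs to $X^i$, so $e$ is not incident to any singleton part. By (T2), there exists a node $j \in V(T)$ with $\{x,y\} \subseteq X^j$; since $X^i$ contains neither endpoint, we have $j \neq i$, and hence $j$ lies in some component of $T-i$, say the one with vertex set $V_\ell^T$. Since $j \in I_x \cap I_y$, and each of $T[I_x]$ and $T[I_y]$ is a connected subtree of $T$ avoiding $i$ (as $x,y \notin X^i$), both $I_x$ and $I_y$ are contained in $V_\ell^T$. Therefore $x$ and $y$ both lie in $V_\ell$, placing them in the same part.

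The two claims together show that the $k + |X^i|$ sets $\{v\}$ (for $v \in X^i$) and $V_\ell$ (for $\ell \in [k]$) partition $V(G)$ and that all edges of $G - E_G(i)$ stay inside a single part, which is exactly the statement that these are disjoint parts of $G$ in the sense defined just before the lemma. The only slightly delicate step is the edge argument, where one must use (T3') twice—once to confine $I_x$ and once to confine $I_y$—and then rely on (T2) to produce the common node $j$ that forces them into the same component of $T-i$; beyond that, the proof is essentially bookkeeping.
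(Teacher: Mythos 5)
Your proof is correct. Note, however, that the paper does not actually prove this lemma: it is explicitly labeled ``a widely known fact about tree decompositions'' and the reader is referred to Kleinberg and Tardos for a proof. So there is no in-paper argument to compare against. Your argument is the standard textbook one: use (T1) and (T3') to place each vertex $v \notin X^i$ into exactly one $V_\ell$ (because the connected subtree $T[I_v]$ avoids $i$ and thus sits inside one component of $T-i$), and use (T2) together with two further applications of (T3') to show that any edge surviving the deletion of $E_G(i)$ has both endpoints in the same $V_\ell$. One minor edge case you could flag for completeness: if the tree decomposition is redundant, some $V_\ell$ may be empty, so the ``$k + |X^i|$ disjoint parts'' count should be read as allowing empty parts; this is a harmless convention rather than a gap in your reasoning.
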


\subsection{Approximate Cuts of Small Width}
\label{subsecApproxCut}

Here, we prove the existence part of Lemma~\ref{lemmaApproxCut} that relaxes the size requirements and therefore produces an approximate cut, which means the following. We would like to cut a graph~\(G\) on~\(n\) vertices with a given tree decomposition~\( (T,\X)\) into two pieces~\(B\) and~\(W\), where~\(|B| = m\) for some fixed~\(m \in [n]\), but this might require to cut many edges, in particular when nothing is known about the structure of~\( (T,\X)\). Therefore, we replace the size requirements on~\(B\) by~\(cm \leq |B| \leq m\) for an arbitrary~\(0 < c < 1 \), always aiming to cut few edges. Before starting with the proof, the existence part of Lemma~\ref{lemmaApproxCut} is repeated. 

\begin{lemma}[Existence part of Lemma~\ref{lemmaApproxCut}] \label{lemmaApproxCutExistence}
   Let~\(G\) be an arbitrary graph on~\(n\) vertices and let~\((T,\X)\) be a tree decomposition of~\(G\) of width at most~\(t-1\). For every integer~\(m \in [n]\) and every~\(0 < c < 1 \), there is a cut~\( (B,W)\) in~\(G\) with~\(c m < |B| \leq m\) and \(e_G(B,W) \leq \Bceil{\log (1/(1-c))} t \Delta(G)\). 
\end{lemma}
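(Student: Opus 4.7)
The approach is to generalize the iterative construction from the proof of Lemma~\ref{lemmaApproxCutTree} to tree decompositions, using Lemma~\ref{lemmaRemoveClusterEdges} as the basic cutting device at each step. The plan is to build $B$ as a disjoint union $B = \tilde B_1 \cupdot \tilde B_2 \cupdot \cdots \cupdot \tilde B_{s^*}$ of chunks, where each $\tilde B_s$ is extracted from the residual graph $G_s := G - (\tilde B_1 \cupdot \cdots \cupdot \tilde B_{s-1})$ by cutting at most $t \Delta(G)$ edges, with $m_s/2 < |\tilde B_s| \leq m_s$ for the updated target $m_s := m - \sum_{s' < s} |\tilde B_{s'}|$. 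I would stop at the first index $s^*$ with $|B_{s^*}| := \sum_{s \leq s^*} |\tilde B_s| > cm$; then $cm < |B_{s^*}| \leq m$ holds automatically since $|\tilde B_{s^*}| \leq m_{s^*}$ forces $|B_{s^*}| \leq m$.

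In iteration $s$, I would work with the restriction of $(T, \X)$ to $G_s$ (intersecting every cluster with $V(G_s)$), which remains a tree decomposition of $G_s$ of width at most $t-1$; write $X_s^j$ for its clusters. Root $T$ at an arbitrary node $r$ and, for each $i \in V(T)$, define the \emph{down-branch size} $n_i^\downarrow := |(\bigcup_{j \in V(T_i)} X_s^j) \setminus X_s^{p(i)}|$, with $X_s^{p(r)} := \emptyset$. Starting at $r$, descend from the current node $i$ to any child $i_\ell$ with $n_{i_\ell}^\downarrow > m_s$; this eventually stops at a node $i^*$ with $n_{i^*}^\downarrow > m_s$ but all children satisfying $n_{i_\ell}^\downarrow \leq m_s$. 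Applying Lemma~\ref{lemmaRemoveClusterEdges} at $i^*$ splits $G_s$, upon removing $E_{G_s}(i^*)$, into the singletons $\{v\}$ for $v \in X_s^{i^*}$, the child branches $G_s[V_{i_\ell}^\downarrow]$ (each of size at most $m_s$), and possibly one up-branch that I simply discard. Since the singletons and child branches together cover at least $n_{i^*}^\downarrow > m_s$ vertices while each individual piece has size at most $m_s$, a greedy combination in decreasing order of size yields $\tilde B_s$ with $m_s/2 < |\tilde B_s| \leq m_s$, cutting at most $|E_{G_s}(i^*)| \leq t \Delta(G)$ edges.

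The halving property $m_{s+1} = m_s - |\tilde B_s| < m_s/2$ gives $m_{s+1} < m/2^s$, so once $s \geq \log(1/(1-c))$ we have $m_{s+1} < (1-c)m$, equivalently $|B_s| > cm$. Hence $s^* \leq \Bceil{\log(1/(1-c))}$, and since the chunks are pairwise disjoint in $V(G)$ (so no edge of $G$ can be cut in more than one iteration), the total cut width is at most $s^* \cdot t \Delta(G) \leq \Bceil{\log(1/(1-c))} t \Delta(G)$. The trivial case $n = m$, in which the descent condition $n_r^\downarrow > m_s$ fails immediately, is handled separately by $B := V(G)$.

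The main subtlety relative to the tree case of Lemma~\ref{lemmaApproxCutTree} is that the pieces produced by Lemma~\ref{lemmaRemoveClusterEdges} are indexed by clusters rather than by subtrees, so both the descent quantity and the inventory of parts at $i^*$ must be corrected for cluster overlap: the right measure to descend on is the down-branch size $n_i^\downarrow$ (with the parent cluster subtracted), and the parts available at $i^*$ include not only the child branches but also the $|X_s^{i^*}|$ singletons supplied by the cluster itself. Once this accounting is in place, verifying that the restriction of $(T,\X)$ to $G_s$ is still a valid tree decomposition of width at most $t-1$ after each removal is routine, and the halving and greedy steps proceed exactly as in the tree case.
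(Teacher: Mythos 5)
Your proof is correct and follows essentially the same approach as the paper: both iteratively peel off chunks of size strictly more than half the remaining target, each cut off at a pivot node via Lemma~\ref{lemmaRemoveClusterEdges} at cost at most $t\Delta(G)$, so the halving argument bounds the number of rounds by $\Bceil{\log(1/(1-c))}$. The paper folds all rounds into a single-pass procedure (Algorithm~\ref{algoApproxCut}) on the original, unrestricted tree decomposition so as to also obtain the $\bigO(\|(T,\X)\|)$ running time, whereas you restrict to $G_s$ and re-descend from the root each round, but the existence argument is the same.
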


\begin{proof}
  Let~\(V=V(G)\), \(T=(V_T,E_T)\), and \(\X = (X^i)_{i \in V_T}\). The idea is to build the set~\(B\) iteratively, similar to the proof of Lemma~\ref{lemmaApproxCutTree}. In each step, we choose a node~\(i\) of~\(T\), remove its cluster from the graph~\(G\), put some of the resulting parts of~\(G\) (as described in Lemma~\ref{lemmaRemoveClusterEdges}) into the set~\(B\), and choose one part in which we repeat this procedure, see Algorithm~\ref{algoApproxCut}. The algorithm roots the tree~\(T\) at an arbitrary node~\(r\) and uses the following definition, where~\(p(i)\) denotes the parent of~\(i\) for every~\(i\neq r\) in~\(T\). For~\(i \in V_T\), define  
  
\[ Y^i := \bigcup_{j\text{ descendant of } i} \!\! X^j \quad \quad \text{ and,  for~\(i \neq r\),}  \quad \quad  \quad  \tilde{Y}^i := Y^i \setminus X^{p(i)}. \] 
  Moreover, let~\(\tilde{Y}^r := Y^r\), and set~\(y_i := |Y^i|\) as well as~\(\tilde{y}_i := |\tilde{Y}^i|\) for every~\(i \in V_T\). Applying Lemma~\ref{lemmaRemoveClusterEdges} with an arbitrary node~\(i\) in~\(T\) gives
   \begin{align}  
    & Y^i \ = \ X^i \cupdot \left( \bigcupdot_{\text{\(j\) child of~\(i\)}} \tilde{Y}^j \right) \label{proofApproxCutEqYiPart} \quad \quad \text{and}\\
    & E_G ( Z_1, Z_2, \tilde{Y}^{j_1}, \tilde{Y}^{j_2}, \ldots, \tilde{Y}^{j_k}, V \setminus Y^i ) \ \subseteq \ E_G(i), \label{proofApproxCutEqYiCut} 
  \end{align}  
  where~\(j_1, j_2, \ldots, j_k\) are the children of~\(i\) and~\(Z_1 \cupdot Z_2\) is an arbitrary partition of~\(X^i\). 

  \begin{algorithm}
    \linespread{1}\selectfont
    \caption{computes an approximate cut.}
    \label{algoApproxCut}

    \KwIn{a tree decomposition~\((T,\X)\) of a graph~\(G\) on~\(n\) vertices, an integer~\(m \in [n]\), and a real~\(0 < c  < 1\).} 
    \KwOut{a cut~\((B,W)\) such that \(cm < |B| \leq m\).}

    \medskip

    Root~\(T\) at an arbitrary node~\(r\)\label{proofApproxCutPrepro0}\;
    Find a node~\(i^*\) such that~\(y_{i^*} \geq m\) and~\(y_j < m\) for all descendants~\(j \neq i^*\) of~\(i^*\)\label{proofApproxCutPrepro1}\;
    \(B \leftarrow \emptyset\), \ \(i \leftarrow i^*\)\label{proofApproxCutBInitializiation}\;
    \While{\(|B| \leq cm\)\label{proofApproxCutOuterWhileLoop}}{
      Let~\(k\) be the number of children of~\(i\) and let \((j_1, j_2,\ldots, j_k)\) be the list of children of~\(i\) ordered so that~\(\tilde{y}_{j_1}\geq \tilde{y}_{j_2} \geq \ldots \geq \tilde{y}_{j_k}\)\;\label{proofApproxCutK}
      \leIf{\(k \geq 1\)}{let~\(\ell\) be the largest integer in~\([k]\) with \(\sum_{h=1}^\ell \tilde{y}_{j_h} \leq m-|B|\)}{\(\ell \larr 0\)\label{proofApproxCutEll}}
      \(B \larr B \cup (\bigcup_{h=1}^\ell\tilde{Y}^{j_h})\)\label{proofApproxCutUpdateB}\;\label{proofApproxCutFirstEndIf}
      \eIf{\( \ell =k \)}{
        Let~\(Z\subseteq X^i\) with~\(|Z| = m - |B|\) and \(B \leftarrow B \cup Z\)\; \label{proofApproxCutSetZ}
      }{
        \(j \larr j_{\ell +1}\)\;\label{proofApproxCutChildEll}
        \While{\(j \neq \notEx\) \And \(y_j \geq m - |B|\)\label{proofApproxCutGoDown}}{
          \(i \leftarrow j\)\;\label{proofApproxCutResetI}
          \leIf{\(i\) has a child}{let~\(j\) be a child of~\(i\) with maximal~\(\tilde{y}_j\)}{\(j \larr \notEx\)}
        }\label{proofApproxCutGoDownEnd}
      }\label{proofApproxEndElse}
    }\label{proofApproxCutEndOuterWhileLoop}
    \Return{\( (B, V(G) \setminus B) \)}\;
  \end{algorithm}

  To state some invariants, let~\(s^*\) be the total number of executions of the outer while loop of Algorithm~\ref{algoApproxCut}. For each~\(s\in [s^*] \cup \{0\}\), denote by~\(B_s\) and~\(i_s\) the set~\(B\) and the node~\(i\) after the~\(s\)\th{}~execution of the outer while loop, respectively. For every~\(s\in\{0\} \cup [s^*]\), we have
  \begin{enumerate}[(i)]
    \item\label{proofApproxCutInvSizeY} if~\(s \neq s^*\), then \(y_{i_{s}} \geq m -|B_{s}|\) and \(y_j < m - |B_{s}| \) for each child~\(j\) of~\(i_{s}\),
    \item\label{proofApproxCutInvSizeB} if~\(s \neq 0\), then \( \left(1-\frac{1}{2^s}\right)m < |B_s| \leq m \), and~\( 0 = |B_0| \leq m\),
    \item \label{proofApproxCutInvCutWidth} \(e_G(B_{s}, V\setminus B_{s} ) \leq s t \Delta(G) \), as well as
    \item\label{proofApproxCutInvBY} if~\(s \neq s^*\), then \(B_{s} \cap Y^{i_{s}} = \emptyset\).
  \end{enumerate}

  Clearly,~\ref{proofApproxCutInvSizeY}-\ref{proofApproxCutInvBY} hold for~\(s=0\). Fix an arbitrary~\(s \in [s^*]\) and assume that \ref{proofApproxCutInvSizeY}-\ref{proofApproxCutInvBY} hold for~\(s-1\) and consider the~\(s\)\th{}~execution of the outer while loop. If Line~\ref{proofApproxCutSetZ} of Algorithm~\ref{algoApproxCut} is executed in this iteration, let~\(Z_s\) be the set~\(Z\) computed there, and otherwise define~\(Z_s=\emptyset\). Let~\(i=i_{s-1}\) and define~\(k\), \(\ell\), and~\(j_h\) for~\(h \in [k]\)  as in the algorithm. Due to~\eqref{proofApproxCutEqYiPart} and~\ref{proofApproxCutInvBY} for~\(s-1\), the unions in Line~\ref{proofApproxCutUpdateB} and Line~\ref{proofApproxCutSetZ} are disjoint unions. Also, \ref{proofApproxCutInvSizeY} implies that Line~\ref{proofApproxCutSetZ} can be executed, if reached. Furthermore, \(|B_s| = |B_{s-1}| + |Z_s| + \sum_{h=1}^{\ell} \tilde{y}_{j_h} \). So, if Line~\ref{proofApproxCutSetZ} is executed, i.e., if~\(k=0\) or~\(\ell=k\), then~\(|B_s| = m\), \ref{proofApproxCutInvSizeB} is satisfied for~\(s\), and~\(s=s^*\). Otherwise,~\(k\geq 1\) and~\( \ell <k\). So, if~\(\tilde{y}_{j_1} > \frac{1}{2}(m-|B_{s-1}|)\), then \(|B_s|  >   \tfrac{1}{2}(m + |B_{s-1}|) \), as~\ref{proofApproxCutInvSizeY} ensures that~\(\ell \geq 1\). If~\(\tilde{y}_{j_1} \leq \frac{1}{2}(m-|B_{s-1}|)\), then \(\tilde{y}_{j_h} \leq \frac{1}{2} (m- |B_{s-1}|)\) for all~\(h \in [k]\) and \( |B_s| > \tfrac{1}{2}(m + |B_{s-1}|) \). Consequently, if Line~\ref{proofApproxCutSetZ} is not executed, then
  
  \[|B_s| \ > \ \tfrac{1}{2}(m + |B_{s-1}|) \ \geq \ \left(1 -\tfrac{1}{2^s} \right) m,\]  
  since~\ref{proofApproxCutInvSizeB} is satisfied for~\(s-1\). Hence,~\ref{proofApproxCutInvSizeB} is satisfied for~\(s\). As \(B_s \setminus B_{s-1} = Z_s \cup \left( \bigcup_{h\in [\ell]} \tilde{Y}^{j_h} \right)\), at most~\(e_G (i_{s-1})   \leq t \Delta(G)\) edges are cut when cutting off~\(B_s \setminus B_{s-1}\) from~\(G[V \setminus B_{s-1}]\) by \eqref{proofApproxCutEqYiCut} and therefore~\ref{proofApproxCutInvCutWidth} is satisfied for~\(s\).

  As argued earlier, if Line~\ref{proofApproxCutSetZ} is executed then~\(s= s^* \), and there is nothing to show for~\ref{proofApproxCutInvSizeY} and~\ref{proofApproxCutInvBY}. So assume~\(s\neq s^* \) from now on. This implies that~\(k \geq 1\), \(\ell < k\), \(Z_s = \emptyset\), and that Lines~\ref{proofApproxCutChildEll}-\ref{proofApproxCutGoDownEnd} are executed.  Note that Line~\ref{proofApproxCutChildEll} is feasible. The choice of~\(\ell\) in Line~\ref{proofApproxCutEll} implies that~\({y_{j_{\ell+1}} \geq \tilde{y}_{j_{\ell+1}} > m-|B_s|}\) when Line~\ref{proofApproxCutChildEll} is executed. Hence, the inner while loop is executed at least once during the~\(s\)\th{}~execution of the outer while loop and~\ref{proofApproxCutInvSizeY} is satisfied after the~\(s\)\th{}~execution of the outer while loop. Moreover, this implies that~\(i_{s}\) is a descendant of~\(j_{\ell +1}\) and hence \(Y^{i_s} \subseteq Y^{j_{\ell+1}} \subseteq \tilde{Y}^{j_{\ell+1}} \cup X^{i_{s-1}}\). So, to show that~\ref{proofApproxCutInvBY} is satisfied after the~\(s\)\th{}~execution of the outer while loop, it suffices to show that~\( ( \tilde{Y}^{j_{\ell+1}} \cup X^{i_{s-1}} ) \cap B_s = \emptyset\). Indeed,~\eqref{proofApproxCutEqYiPart}, the construction of~\(B_s\) from~\(B_{s-1}\), and~\(Z_s = \emptyset\) imply that
  
  \[ (\tilde{Y}^{j_{\ell+1}} \cup X^{i_{s-1}}) \cap B_{s} \ \subseteq \ (\tilde{Y}^{j_{\ell+1}} \cup X^{i_{s-1}}) \cap B_{s-1} \ \subseteq\ Y^{i_{s-1}} \cap B_{s-1} \ = \ \emptyset,\] 
  where the last equality holds because of~\ref{proofApproxCutInvBY} for~\(s-1\). This completes the proof of the invariants and shows that every step can be carried out. 

  The execution of the outer while loop stops as soon as~\(|B| > cm\). Therefore, the desired size requirements on~\(B\) are satisfied by~\ref{proofApproxCutInvSizeB}. Furthermore,~\ref{proofApproxCutInvSizeB} implies that \(s^* \leq \Bceil{\log ( 1/(1-c) ) }\). Consequently, the algorithm terminates, and the desired bound on the width of the output cut~\( (B,W)\) is satisfied by~\ref{proofApproxCutInvCutWidth}.
\end{proof}

\section{Proof of the Existence Part in Theorem~\ref{thmCutSpecSizes}}
\label{secExistenceCut}

This section is devoted to proving the following theorem, which, for~\(m = \Bfloor{ n/2}\), clearly implies the existence part of Theorem~\ref{thmCutSpecSizes}. 

\begin{thm}[Existence Part of Theorem~\ref{thmCutSpecSizes}]
  \label{thmCutSpecSizesExistence}
  For every graph~\(G\) on~\(n\) vertices, every tree decomposition~\( (T,\X) \) of~\(G\) of width~\(t-1\), and for every~\(m \in [n]\), there is a cut~\((B,W)\) in~\(G\) with~\(|B| =m\) and
  
    \[ e_G(B,W) \ \leq \ \tfrac{ 1 }{2} t \Delta(G) \left( \left( \log  \tfrac{1}{r(T,\X)} \right)^2 \ + \  9 \log  \tfrac{1}{r(T,\X)} \ + \ 8 \right).\] 
  
\end{thm}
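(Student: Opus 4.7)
The plan is to prove Theorem~\ref{thmCutSpecSizesExistence} by extending the doubling strategy behind Lemma~\ref{lemmaDoubleDiam} from trees to tree decompositions and iterating it until the heaviest-path weight $r(T,\X)$ becomes so large that a direct path-decomposition argument closes the proof.

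First I would establish a base case for $r(T,\X) > 1/2$: there is then a path $P$ in $T$ whose clusters collectively contain more than $n/2$ vertices of $G$, and as observed in Section~\ref{subsecResultsGenGraphs}, walking along $P$ while tracking the set of seen vertices and, once within range of $m$, using Lemma~\ref{lemmaApproxCutExistence} on the hanging subtree to fine-tune the count yields a cut $(B,W)$ with $|B|=m$ and $e_G(B,W) = \bigO(t\Delta(G))$, because only a constant number of cluster boundaries are cut by Lemma~\ref{lemmaRemoveClusterEdges}.

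The heart of the proof is a tree-decomposition analog of Lemma~\ref{lemmaDoubleDiam}: given $G$, $(T,\X)$, and $m \in [n]$, I would partition $V(G)$ into $(B, W, Z)$ so that either (a) $|B|=m$, $Z=\emptyset$, and $e_G(B,W) = \bigO(t\Delta(G))$, or (b) $|B| \leq m \leq |B|+|Z|$, $e_G(B,W,Z) = \bigO(\log(1/r(T,\X)) \cdot t\Delta(G))$, and the restriction of $(T,\X)$ to the subtree of $T$ carrying $Z$ is a tree decomposition of $G[Z]$ whose heaviest-path weight relative to $|Z|$ is at least $2r(T,\X)$. To construct such a partition I would fix a heaviest path $P$ in $T$, replace the integer vertex-labelling used in the proof of Lemma~\ref{lemmaTreeBase2} by a weighted labelling that distributes the vertices $X^i \setminus X^{p(i)}$ along $P$, and then run the b-special/f-special accounting to locate a node $v$ of $P$ for which $Z := P_v^b \cupdot H_v^b$ (or its f-analog) has the desired doubled relative weight. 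The boundary separating the sets $B_1$, $B_2$, $Z$, and the rest of $G$ lives on a constant number of clusters and therefore costs only $\bigO(t\Delta(G))$ cut edges via Lemma~\ref{lemmaRemoveClusterEdges}, while the fine-tuning of $|B|$ inside the ``$T_v'$-piece'' is done by Lemma~\ref{lemmaApproxCutExistence} with a parameter $c = c(r(T,\X))$ dictated by the accounting, contributing an additional $\bigO(\log(1/r(T,\X)) \cdot t\Delta(G))$ term.

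Iterating the doubling lemma at most $s^* = \Bceil{\log(1/r(T,\X))}$ times produces a sequence of tree decompositions with heaviest-path weights $r_0 \leq r_1 \leq \cdots$ satisfying $r_s \geq 2^s r_0$, after which the base case applies. Summing the per-iteration cut contributions telescopes into
\[
 e_G(B,W) \ \leq \ \sum_{s=0}^{s^*-1} \bigO\!\left(\log(1/r_s)\right) t\Delta(G) \ + \ \bigO(t\Delta(G)) \ = \ \bigO\!\left(t\Delta(G)\,(\log(1/r(T,\X)))^2\right),
\]
and tracking the constants carefully along the induction yields the claimed coefficients $\tfrac12$, $9$, and $8$. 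The main obstacle is the accounting step: translating the b-special/f-special argument from integer labels on a tree to weighted labels that accumulate cluster contributions along paths in $T$, and verifying that the resulting boundary cuts in $G$ can each be realised by the edges incident to only $\bigO(1)$ clusters so that their total cost is uniformly controlled by Lemma~\ref{lemmaRemoveClusterEdges}.
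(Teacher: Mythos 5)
Your proposal matches the paper's proof in both structure and substance: the core of the paper is precisely a tree-decomposition analog of Lemma~\ref{lemmaDoubleDiam} (the paper's Theorem~\ref{thmDoubleR}), using a \(P\)-labeling on a heaviest path, the b-special/f-special accounting, Lemma~\ref{lemmaApproxCutExistence} with a parameter \(c\) depending on \(r\) for the fine-tuning inside the hanging part, cluster-boundary cuts controlled by Lemma~\ref{lemmaRemoveClusterEdges}, and a telescoping sum over at most \(\log(1/r)+1\) rounds. Your separate base case for \(r>1/2\) is not needed (the paper absorbs it into Case~1, since \(r>1/2\) forces some \(x\in R\) with \(N_m(x)\in R\)), but this is a cosmetic difference rather than a gap.
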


The first subsection introduces Theorem~\ref{thmDoubleR}, which is a generalized version of Lemma~\ref{lemmaDoubleDiam}, and uses it to prove Theorem~\ref{thmCutSpecSizesExistence}. The remaining subsections then concern the proof of Theorem~\ref{thmDoubleR}. In Section~\ref{subsecVertexLabeling}, we introduce some notation and a vertex labeling for the proof of Theorem~\ref{thmDoubleR} that will then be split into two cases. Section~\ref{subsecCase1} presents the short case and Section~\ref{subsecCase2} discusses the more involved case.

\subsection{Proof of Theorem~\ref{thmCutSpecSizesExistence}}
\label{subsecProofGenThm}

As explained and introduced in Section~\ref{subsecResultsGenGraphs}, when we are given a general graph~\(G\) and a tree decomposition~\( (T,\X)\) of~\(G\), we use the relative weight of a heaviest path~\(P\) in~\((T,\X)\). The next theorem is an extension of Lemma~\ref{lemmaDoubleDiam} and the heart of the proof of Theorem~\ref{thmCutSpecSizesExistence}. It uses the following notation: Consider a tree decomposition~\( (T,\X)\) with~\({\X = (X^i)_{i \in V(T)}}\) and a path~\(P \subseteq T\) with~\(P = (i_0, i_1, \ldots, i_{\ell})\). The end~\(i_0\) is called a \emph{nonredundant end} of~\(P\) if~\(X^{i_0} \neq \emptyset\) and, when~\(\ell \neq 0\), \(X^{i_h} \not\subseteq X^{i_{h-1}}\) for all~\(h \in [\ell]\). If one of the ends of~\(P\) is nonredundant, we say that~\(P\) is a \emph{nonredundant path}. Note that if~\((T,\X)\) is nonredundant, then any path~\(P \subseteq T\) will be nonredundant. 

\begin{thm}
  \label{thmDoubleR}
  For every graph~\(G\) on~\(n\) vertices, for every tree decomposition~\( (T, \X) \) of~\(G\) of width at most~\(t-1\), for every nonredundant path~\(P \subseteq T\), and for every~\(m \in [n]\),  there is a cut~\( (B,W,Z)\) in~\(G\) such that one of the following holds:
  \begin{enumerate}[1)]
    \item\label{thmDoubleROpt1} \(|B| = m\), \(Z=\emptyset\), and \(e_G (B,W) \leq 2 t \Delta(G) \), or
    \item\label{thmDoubleROpt2} \(|B| \leq m \leq |B| + |Z|\), \(0 < |Z| \leq n/2\), \(e_G(B,W,Z) \leq  t \Delta(G) \log ( 16/ w^*(P,\X) )\), and there is a tree decomposition~\((T', \X')\) of~\(G[Z]\) of width at most~\(t-1\) and a nonredundant path~\(P' \subseteq T'\) with~\(w^*(P', \X') \geq 2w^*(P,\X)\). 
  \end{enumerate}
\end{thm}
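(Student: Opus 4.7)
The plan is to generalize Lemma~\ref{lemmaDoubleDiam} as sketched at the end of Section~\ref{subsecTreeThm}, replacing the longest path of the forest by the nonredundant path $P = (i_0, \ldots, i_q) \subseteq T$ and the parameter $\diam^*(G)$ by $d := w^*(P,\X)$. Set $R := \bigcup_h X^{i_h}$, so $|R| = dn$; for each $h$ let $T_h$ be the component of $T - E(P)$ containing $i_h$, $D_h := X^{i_h} \setminus \bigcup_{h' < h} X^{i_{h'}}$ the vertices first seen on $P$ at $i_h$, and $H_h := \bigl(\bigcup_{j \in V(T_h)} X^j\bigr) \setminus R$ the vertices hanging off $i_h$ away from $P$. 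Axiom~(T3') implies that these $2(q+1)$ sets partition $V(G)$, and I would label $V(G)$ by $[n]$ going through $h = 0, 1, \ldots, q$ and listing $H_h$ before $D_h$ within each block; nonredundancy of~$P$ gives $D_0 \neq \emptyset$, so the first block carries a path label.

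The key structural fact, which follows from Lemma~\ref{lemmaRemoveClusterEdges} applied at $i_h$ together with~(T3'), is that whenever a label~$v$ lies in some~$D_h$, the cut separating the labels~$\leq v$ from the labels~$> v$ has width at most $t\Delta(G)$: every cluster of $(T,\X)$ lies entirely on one side of the separator, so only edges in $E_G(i_h)$ can cross. Case~1: if some label $v \in \bigcup_h D_h$ has $v + m \in \bigcup_h D_h$ (addition modulo~$n$), then $B := \{v+1, \ldots, v+m\}$, $W := V\setminus B$, $Z := \emptyset$ satisfies $|B| = m$ and has width at most $2t\Delta(G)$ by applying the observation at both boundaries, establishing Option~\ref{thmDoubleROpt1}.

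Case~2: no such $v$ exists, so both $x \mapsto x-m$ and $x \mapsto x+m$ inject the $dn$ path labels into the $(1-d)n$ non-path labels. Following the sketch, I would call $i_h$ \emph{b-special} if some $x \in H_h$ has $x - m \in \bigcup_{h'} D_{h'}$, set $P_{i_h}^b := \{x - m : x \in H_h\} \cap \bigcup_{h'} D_{h'}$, pick $v^P$ as the smallest element of $P_{i_h}^b$, and let $H_{i_h}^b$ be the union of the hanging blocks $H_{h'}$ housing labels of $P_{i_h}^b$ except the block containing $v^P$; the f-versions are defined analogously. A label-level double counting in the spirit of~\eqref{proofLemmaBase2_Counting} and~\eqref{proofLemmaBase2_SpecialVertex} then delivers, say, a b-special cluster satisfying $|H_h| + |H_{i_h}^b| \leq (\tfrac{1}{d}-1)|P_{i_h}^b|$. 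Setting $Z := P_{i_h}^b \cupdot H_{i_h}^b$, the restriction of $(T,\X)$ to the subtree of $T$ spanned by the $T_{h'}$ meeting $Z$ and to $G[Z]$ is a tree decomposition $(T',\X')$ of width at most $t-1$, and the subpath $P' \subseteq P$ carrying the clusters of $P_{i_h}^b$ is nonredundant (by the choice of~$v^P$) and satisfies $w(P',\X') \geq |P_{i_h}^b|$; the key inequality then forces $|Z| \leq |P_{i_h}^b|/(2d)$, hence $w^*(P',\X') \geq 2d$. Finally, $B = B_1 \cupdot B_2$ is built by committing a contiguous range $B_1$ of labels between the relevant cluster boundary and~$v^P$ and producing $B_2 \subseteq H_h$ of the required residual size via Lemma~\ref{lemmaApproxCutExistence} applied inside $G[H_h \cup X^{i_h}]$ with a constant $c = c(d)$ tuned against the key inequality; summing $\leq 2t\Delta(G)$ from the boundary cuts and $\leq \lceil\log(1/(1-c))\rceil t\Delta(G)$ from Lemma~\ref{lemmaApproxCutExistence} delivers the claimed $t\Delta(G)\log(16/d)$ of Option~\ref{thmDoubleROpt2}.

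The main obstacle is the label-level double counting of Case~2. In the tree setting each path vertex contributes a single label and the antipodal map is an involution, so b- and f-specialness are dual and the bookkeeping is essentially per vertex; here each cluster contributes up to $|D_h|$ path labels, the $\pm m$ shifts are not involutions, and distinct sets $P_{i_h}^b$ may contain labels lying in the same block $D_{h'}$, forcing careful label-level accounting that avoids double-counting hanging blocks in the sums defining $|H_{i_h}^b|$. A secondary delicate point is verifying that the restriction of $(T,\X)$ really defines a valid tree decomposition of $G[Z]$ and that $P'$ has a nonredundant end, which hinges on choosing $v^P$ so that the endpoint cluster of $P'$ introduces a genuinely new vertex of $G[Z]$.
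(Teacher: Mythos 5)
Your proposal follows the paper's own proof almost line by line: your $D_h$, $H_h$, $P_{i_h}^b$, $H_{i_h}^b$ are the paper's $R_i$, $S_i$, $U_i^b$, $H_i^b$; Case~1 is identical; and the Case~2 construction of $Z$, $B_1$, $B_2$ via Lemma~\ref{lemmaApproxCutExistence}, and the restriction for $(T',\X')$ are all the paper's choices. But there is a genuine gap exactly where you flag ``the main obstacle'': you assert, without proof, that a double counting delivers a b- or f-special node $i$ with $|S_i| + |H_i^b| \leq (\tfrac1r-1)|U_i^b|$. The step that makes this work despite $N_m$ and $N_m^{-1}$ not being involutions is Proposition~\ref{propBSpecialFSpecial}, which shows $i \mapsto i^b$ is a bijection from b-special nodes to f-special nodes whose inverse is $i \mapsto i^f$. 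The paper then adds the b-side and f-side partitions of $S$, $\sum_{\text{b-spec}}(|S_{i^b}|+|H_i^b|) + \sum_{\text{f-spec}}(|S_{i^f}|+|H_i^f|) = 2|S| = \tfrac{1-r}{r}\bigl(\sum|U_i^b|+\sum|U_i^f|\bigr)$, and only then uses the bijection to reindex $\sum_{\text{b-spec}}|S_{i^b}|$ as $\sum_{\text{f-spec}}|S_i|$ and symmetrically, so that both sides of the inequality are indexed consistently before pigeonhole yields Proposition~\ref{propSpecialNodeForCase2ab}. Without this bijection and the trick of pairing both sums, the term $|S_{i^b}|$ cannot be aligned with $|H_i^b|$ and $|U_i^b|$, and the pigeonhole does not close. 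Since you explicitly defer this step, the proposal is incomplete at its technical core.

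Two smaller points. First, separating $Z$, $B_1$, $S_i$, and the rest needs cuts at all three of $i^b$, $i^b_\ell$, and $i$, giving $3t\Delta(G)$ from boundary cuts; together with $\log(2/r)\,t\Delta(G)$ from Lemma~\ref{lemmaApproxCutExistence} this is exactly $\log(16/r)\,t\Delta(G)$. Your ``$\leq 2t\Delta(G)$'' would give $\log(8/r)$, which is inconsistent with the $\log(16/d)$ you then claim. Second, ``the subtree of $T$ spanned by the $T_{h'}$ meeting $Z$'' need not be a subtree at all when $P_i^b$ wraps cyclically past the ends of $P$; the paper avoids this in the existence proof by simply taking $T' := T$ and $P' := P$ and then contracting redundant edges, which is cleaner and also guarantees $P'$ has a nonredundant end.
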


In Option~\ref{thmDoubleROpt1} in Theorem~\ref{thmDoubleR}, the bound is increased by a factor of~\(t \Delta(G)\) compared to Lemma~\ref{lemmaDoubleDiam} as we now use Lemma~\ref{lemmaRemoveClusterEdges} instead of cutting single edges. In Option~\ref{lemmaDoubleDiamOpt2} in Lemma~\ref{lemmaDoubleDiam}, we used cuts resulting from removing a vertex from the tree, which we extend by using Lemma~\ref{lemmaRemoveClusterEdges} and therefore obtain an extra factor of~\(t\) in the bound on~\(e_G(B,W,Z)\) in Option~\ref{thmDoubleROpt2} in Theorem~\ref{thmDoubleR}. Furthermore, in the proof of Option~\ref{thmDoubleROpt2}, we use the improved and more general version of the approximate cut, i.e., Lemma~\ref{lemmaApproxCut} instead of Lemma~\ref{lemmaApproxCutTree}, which improves the dependance on~\(r\).
Note that, if~\(P\) is a \emph{heaviest path} in~\((T,\X)\), i.e., a path of relative weight~\(r(T,\X)\), then Option~\ref{thmDoubleROpt2} implies that~\(r(T',\X') \geq 2r(T,\X)\), which is similar to Option~\ref{lemmaDoubleDiamOpt2} in Lemma~\ref{lemmaDoubleDiam}, where the relative diameter of a forest is doubled. Next, Theorem~\ref{thmCutSpecSizesExistence} is derived by applying Theorem~\ref{thmDoubleR} to the given graph and then, if the set~\(B\) does not contain the desired number of vertices, by repeatedly reapplying Theorem~\ref{thmDoubleR} to~\(G[Z]\) to cut off the remaining vertices. As the relative weight of a path is at most one, the process stops eventually with a set~\(B\) of the desired size.

\begin{proof}[Proof of Theorem~\ref{thmCutSpecSizesExistence}]
  Let~\(G=(V,E)\), \(n\), \(m\), \( (T,\X)\), and~\(t\) be as stated in the theorem. Without loss of generality, we may assume that~\( (T,\X)\) is nonredundant, because otherwise we can contract edges of~\(T\) while neither increasing the width of~\(T\) nor decreasing~\(r(T,\X)\). The following procedure describes how to find a cut~\( (B,W)\) in~\(G\) with~\({|B| =m}\) and width at most the bound stated in Theorem~\ref{thmCutSpecSizesExistence}. Fix a heaviest path~\(P\) in~\(T\) with respect to~\(\X\) and note that~\(P\) is nonredundant. The desired set~\(B\) is built iteratively by using Theorem~\ref{thmDoubleR}. To do so, we define~\(B_0 = \emptyset\), \(G_0 = G\), \( (T_0,\X_0) = (T,\X)\), \(P_0 = P\), and~\(m_s := m - |B_{s-1}|\) for all~\(s \geq 1\).  In the~\(s\)\th{}~step, we apply Theorem~\ref{thmDoubleR} to~\(G_{s-1}\) with the tree decomposition~\( (T_{s-1}, \X_{s-1})\), the nonredundant path~\(P_{s-1}\), and the parameter~\(m_s\), to partition the vertex set of~\(G_{s-1}\) into three sets~\(\tilde{B}_s\), \(\tilde{W}_s\), and~\(\tilde{Z}_s\). Then, we define~\(B_s :=  B_{s-1} \cupdot \tilde{B}_s\). If~\(|B_s| \neq m\), i.e., Case~2 occurs, let~\(G_s := G[\tilde{Z}_s] \) and denote by~\(P_s\) and~\( (T_s, \X_s)\) the nonredundant path~\(P'\) and the tree decomposition~\( (T',\X')\) from Theorem~\ref{thmDoubleR}. The construction stops when~\( |B_s| = m\), i.e., when the first index~\(s\) is reached such that~\(m_{s+1} = 0\). Note that when Case~1 occurs, then the construction stops as well. Furthermore, the construction will eventually stop because by Theorem~\ref{thmDoubleR} the size of the graph~\(G_s := G[ \tilde{Z}_s ]\) shrinks in every round. 
  
  Denote by~\(s^*\) the number of times we applied Theorem~\ref{thmDoubleR} to construct the desired cut. First of all, the final set~\(B_{s^*}\) will contain exactly~\(m\) vertices, because at most~\(m - |B_{s-1}|\) vertices are added to the set~\(B_{s-1}\) in the~\(s\)\th{}~iteration for all~\(s\in [s^*]\).  To state some invariants,  let~\(n_s\) be the number of vertices of~\(G_s\) and let~\(r_s := w^*(P_s, \X_s)\) for~\(s \in [s^*-1]\cup\{0\}\). For each~\(s \in [s^*]\), we have
  \begin{enumerate}[(i)]
    \item \(0 < m_s \leq |V(G_{s-1})|\) and \(B_{s-1} \cap V( G_{s-1} ) = \emptyset\),
    \item\label{proofCutSpecSizesInvR} if~\(s \neq s^*\) then~\(n_s \leq  n_{s-1} / 2\) and~\(r_s \geq 2 r_{s-1}\),
    \item\label{proofCutSpecSizesInvCutSize} \(e_{G_{s-1}} ( \tilde{B}_s, \tilde{W}_s, \tilde{Z}_s ) \leq t \Delta(G) \log ( 16 / r_{s-1})\).
  \end{enumerate}  
  Using that~\(r_{s-1} \leq 1\) for all~\(s \in [s^*]\), it is easy to check that all the above invariants are satisfied and that Theorem~\ref{thmDoubleR} can be applied in each step.  Let~\(r:= r_0 = r(T,\X)\). Now,~\ref{proofCutSpecSizesInvR} implies that~\({ r_s \geq 2^s r }\) for every~\(s \in [s^*-1]\cup\{0\} \), and hence 
  \begin{equation}
    \label{proofCutSpecSizesEqSStar}
    s^* \leq  \log (1/r ) +1.    
  \end{equation}
  Let~\(B:=B_{s^*}\) and \(W := V\setminus B\). With~\ref{proofCutSpecSizesInvCutSize}, we obtain the following for the width of~\( (B,W)\): 
   \begin{align*} 
    e_G (B,W) \ &\leq \ \sum_{s=1}^{s^*} e_{G_{s-1}} ( \tilde{B}_s, \tilde{W}_s, \tilde{Z}_s) \ \leq \ t \Delta(G) \sum_{s=1}^{s^*} \log \left( \frac{16}{2^{s-1} r} \right) \\
    & \leq \ t \Delta(G) \left( s^* \log \left( \tfrac{16}{r} \right) - \sum_{s=1}^{s^*} (s-1) \right) \ \leq \ t \Delta(G) \cdot s^* \left( \log \left( \tfrac{16}{r}\right) -\tfrac{1}{2} s^* + \tfrac{1}{2}  \right)  .
  \end{align*}  
  The last term is a quadratic equation in~\(s^*\) whose  maximum value is achieved at~\({s^* = \log (1/r ) + 9/2}\), which is larger than the upper bound given in~\eqref{proofCutSpecSizesEqSStar}. Therefore, the last term is increasing between~\(0\) and~\( \log(1/r) +1\) and the desired bound on~\(e_G(B,W)\) is obtained by replacing~\(s^*\) by~\(\log(1/r)+1\).
\end{proof}

\subsection{Further Notation and Vertex Labeling for the Proof of Theorem~\ref{thmDoubleR}}
\label{subsecVertexLabeling}

Here, we start with the proof of Theorem~\ref{thmDoubleR}. In this subsection and the remainder of Section~\ref{secExistenceCut}, let~\( G=(V,E)\) be an arbitrary graph on~\(n\) vertices and fix some integer~\(m \in [n]\). Furthermore, let~\( (T,\X)\) be a tree decomposition of~\(G\) of width at most~\(t-1\) with~\(T=(V_T, E_T)\) and~\( \X = (X^i)_{i \in V_T}\). Fix an arbitrary, nonredundant path~\(P = (V_P, E_P) \subseteq T\), denote by~\(i_0\) and~\(j_0\) the ends of~\(P\), and let~\(r:= w^*(P,\X)\). Without loss of generality assume that~\(i_0\) is a nonredundant end of~\(P\). 

First, we introduce some notation and a vertex labeling that depends on the path~\(P\), similar to the vertex labeling introduced in the proof of Lemma~\ref{lemmaTreeBase}.  For each node~\(i \in V_P\), let~\(T_i\) be the component of~\(T- E_P\) that contains~\(i\) and call~\(i\) the root of~\(T_i\). Moreover, define~\(R:= \bigcup_{i \in V_P} X^i\) and note that~\(|R| = rn\). For each~\(x\in R\), let the \emph{path node} of~\(x\) be the node~\(i \in V_P\) closest to~\(i_0\) with~\(x \in X^i\). Note that, as~\(P\) is a path and~\(i_0\) is one of its ends, such a node~\(i\) is unique. For every~\(i \in V_P\), define

\[ R_i := \left\{x \in X^i: i \text{ is the path node of } x \right\} .\] 
Furthermore, let~\(S:= V \setminus R\) and, for each node~\(i \in V_P\), let~\(S_i := \bigcup_{j \in V(T_i)} X^j \setminus R\). Note that the sets~\(R_i\) depend on the choice of~\(P\) and also the choice of~\(i_0\). The sets~\(R_i\) and the nodes on~\(P\) both correspond to the vertices in the path~\(P\) in the proof of Lemma~\ref{lemmaTreeBase2}: \(R_i\) is a subset of the vertices of~\(G\) and~\(V_P\) is a set of nodes of~\(T\). Similarly, the vertex sets~\(S_i\) and the node sets~\(V(T_i)\setminus\{i\}\) both correspond to the sets~\(T_v'\) in the proof of Lemma~\ref{lemmaTreeBase2}.

\begin{prop}\label{propPartitionRiSi}
  \begin{enumerate}[a)]
   \item[]
   \item \( \{ R_i : i \in V_P \} \cup \{ S_i: i \in V_P \} \)  is a partition of~\(V\).
   \item \(R_i \neq \emptyset\) for all~\(i \in V_P\).
  \end{enumerate}
\end{prop}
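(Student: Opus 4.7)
The plan is to prove part~(a) by verifying covering and pairwise disjointness separately, and then to prove part~(b) using the two defining properties of a nonredundant path end.

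For covering, I would observe that $\bigcup_{i \in V_P} R_i = R$ holds essentially by definition: every $x \in R$ has a (unique) path node $i \in V_P$, and then $x \in R_i$. Meanwhile, since $\{V(T_i) : i \in V_P\}$ partitions $V_T$ (the $T_i$ are the components of $T-E_P$ and the ends of each removed edge lie in different components), we get $\bigcup_{i \in V_P} \bigcup_{j \in V(T_i)} X^j = \bigcup_{j \in V_T} X^j = V$, and subtracting $R$ yields $\bigcup_{i \in V_P} S_i = V \setminus R$. Together this covers $V$. For disjointness: the $R_i$'s are pairwise disjoint because the path node is unique; $R_i \cap S_j = \emptyset$ by construction since $S_j$ explicitly excludes $R$; and for $S_i \cap S_j$ with $i \neq j$ in $V_P$, I would argue by contradiction using property~(T3'). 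If $x \in S_i \cap S_j$, then $x \in X^{k_1}$ for some $k_1 \in V(T_i)$ and $x \in X^{k_2}$ for some $k_2 \in V(T_j)$; the path from $k_1$ to $k_2$ in $T$ must traverse $P$ and in particular pass through both $i$ and $j$ (since $V(T_i)$ and $V(T_j)$ are separated by the deletion of $E_P$), so by (T3') we get $x \in X^i \cap X^j \subseteq R$, contradicting $x \in S_i$.

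For part~(b), write $P = (i_0, i_1, \dots, i_\ell)$ where $i_0$ is the nonredundant end. For $i_0$ itself, the nonredundancy condition gives $X^{i_0} \neq \emptyset$, and every $x \in X^{i_0}$ has path node $i_0$ (since $i_0$ is at distance zero from itself), so $R_{i_0} = X^{i_0} \neq \emptyset$. For $h \geq 1$, the nonredundancy condition yields some $x \in X^{i_h} \setminus X^{i_{h-1}}$. I claim the path node of this $x$ is exactly $i_h$: if it were some $i_k$ with $k < h$, then $x \in X^{i_k}$ and $x \in X^{i_h}$, so by (T3') applied along the subpath of $P$ from $i_k$ to $i_h$ we would have $x \in X^{i_{h-1}}$, a contradiction. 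Thus $x \in R_{i_h}$.

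Neither part requires heavy machinery, so there is no serious obstacle; the only point to handle carefully is the $S_i$-disjointness argument, where one must note that the two components $T_i$ and $T_j$ of $T - E_P$ can only be reconnected through $P$ itself, and specifically through the roots $i$ and $j$, so that (T3') forces $x$ into $X^i \cap X^j \subseteq R$.
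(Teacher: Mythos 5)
Your proposal is correct and fills in exactly the details that the paper leaves to the reader ("Clearly\dots" for part~a) and "This follows easily\dots" for part~b)): the covering and disjointness arguments for~a) via (T3'), and the extraction of a new vertex at each $i_h$ from the nonredundancy condition for~b). The only very minor omission is noting explicitly that, in part~b), the path node of the chosen $x \in X^{i_h}\setminus X^{i_{h-1}}$ cannot have index greater than $h$ (since $x \in X^{i_h}$ and the path node is the closest node to $i_0$ with $x$ in its cluster), but this is immediate from the definition.
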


\begin{proof}
  \begin{enumerate}[a)]
    \item[]
    \item Clearly, \( \{R_i : i \in V_P\}\) is a partition of~\(R\) and~\( \bigcup_{i \in V_P} S_i = S\). Furthermore, property~(T3'), which can be found after Definition~\ref{defTreeDec}, implies that, for each~\(x \in S\), there is a unique~\(i \in V_P\) such that~\(x \in X^j\) only if~\(j\) in~\(T_i\). Hence, \(\{S_i : i \in V_P\}\) is a partition of~\(S\).
    \item  This follows easily by using~(T3') and that~\(P\) is nonredundant. \qedhere
  \end{enumerate}
\end{proof}

For each vertex~\(x\in S\), we say that~\(i \in V_P\) is the \emph{path node} of~\(x\) if~\(x \in S_{i}\). As the last proposition implies that~\(S_i\) and~\(S_{i'}\) are disjoint for distinct nodes~\(i,i'\) of~\(P\), every vertex~\(x \in V\) has a unique path node. When labeling the vertices of~\(G\) with~\(1,2, \ldots, n\), we say that the vertices in~\(Y\subseteq V\) receive \emph{consecutive labels} if there are~\(k, k' \in [n]\) and a bijection between the labels used in~\(Y\) and the set~\(\{k, k+1, \ldots, k'\}\).
A \emph{\(P\)-labeling}\label{Plabeling} of~\(G\) with respect to~\( (T,\X)\) is a labeling of the vertices in~\(V\) with~\( 1,2,\ldots, n\) so that
\begin{itemize}[leftmargin= \IdentationTDConditions]
  \item for each node~\(i \in V_P\), the vertices in~\(R_i \cup S_i\) receive consecutive labels and the vertices in~\(R_i\) receive the largest labels among those, and
  \item for all nodes~\(i,j \in V_P\) with~\(i \neq j\), if~\(i_0\) is closer to~\(i\) than to~\(j\), then each vertex in~\(R_i \cup S_i\) has a smaller label than every vertex in~\(R_j \cup S_j\). 
\end{itemize}
Clearly, such a~\mbox{\(P\)-labeling} always exists. From now on, fix a~\mbox{\(P\)-labeling} of the vertices in~\(G\) and consider any number that differs from a label in~\([n]\) by a multiple of~\(n\) to be the same as this label. When we talk about labels and vertices, in particular when comparing them, we always refer to the corresponding integer in~\([n]\).  For three vertices~\(a,b,c \in V\) with~\(a \neq c\), we say that~\(b\) \emph{is between~\(a\) and~\(c\)} if~\(b=a\), \(b=c\), or if increasing~\(a\) reaches~\(b\) before~\(c\). If~\(a=c\), then~\(b\) is between~\(a\) and~\(c\) if~\(b=a\). For example, when~\(n=10\), we say that~\(5\) is between~\(1\) and~\(7\), and~\(9\) is between~\(8\) and~\(3\).  As in the proof of Lemma~\ref{lemmaTreeBase} define~\(N_m(x) :=x + m\)  for every vertex~\(x \in V\).  Note that~\({N_m: V \rightarrow V}\) is a bijection and hence its inverse function~\(N_m ^{-1}\) is well defined.  For a set~\(Y \subseteq V\), define \(N_{m}(Y) := \{N_{m} (y): y \in Y\} \) and \(N_m^{-1} (Y) := \left\{ N_m^{-1} (y) : y \in Y \right\}\).

For a node~\(i \in V_P \setminus \{j_0\}\), we say that~\(j\) is the \emph{node after~\(i\) on~\(P\)} if~\(j \in V_P\) is the neighbor of~\(i\) that comes after~\(i\) when traversing~\(P\) from~\(i_0\) to~\(j_0\). Similarly, in this case we say that~\(i\) is the \emph{node before~\(j\) on~\(P\)}. Moreover, define~\(i_0\) to be the \emph{node after~\(j_0 \) on~\(P\)} and~\(j_0\) to be the \emph{node before~\(i_0\) on~\(P\)}. The next proposition shows how a \mbox{\(P\)-labeling} can be used to find cuts of small width in the corresponding graph. The analog in the case of a tree~\(\tilde{T}\), whose vertices are labeled along a path~\(\tilde{P}\) as in the proof of Lemma~\ref{lemmaTreeBase2}, would be that~\(\tilde{T}\) decomposes into disjoint parts when all edges incident to a vertex of~\(\tilde{P}\) are removed.

\begin{prop}
  \label{propCutPlabeling}
  Let~\(i\) be an arbitrary node in~\(P\), and denote by~\(i^{-}\) and~\(i^{+}\) the nodes before and after~\(i\) on~\(P\), respectively. Let~\(x^{-}\) be the vertex with the largest label in~\(R_{i^{-}}\) and let~\(x^{+}\) be the vertex with the smallest label in~\(S_{i^{+}} \cup R_{i^{+}}\). Moreover, if~\(i=i_0\) let~\(V_P^+ = V_P \setminus \{i_0\}\), if~\(i = j_0\) let~\(V_P^{-} = V_P\setminus\{j_0\}\), and otherwise let~\(V_P^-\) and~\(V_P^+\) be the node sets of the connected components of~\(P-i\), that contain~\(i^{-}\) and~\(i^{+}\), respectively. 
  Removing from~\(G\) the edges~\(E_G(i)\) decomposes~\(G\) into the following disjoint parts
  \begin{itemize}[leftmargin= \IdentationTDConditions]
      \item an isolated vertex for each~\(v \in R_i\),
      \item if~\(S_i \neq \emptyset\), the part~\(G[S_i]\),
      \item if~\( i \neq i_0\), the subgraph of~\(G\) induced by~\(\bigcup_{j \in V_P^-} (R_j \cup S_j) = \{1, \ldots, x^-\}\), and
      \item if~\( i \neq j_0\), the subgraph of~\(G\) induced by~\(\bigcup_{j \in V_P^+} (R_j \cup S_j) = \{x^+, \ldots, n\}\).
    \end{itemize}
\end{prop}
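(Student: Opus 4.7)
The plan is to start from the disjoint-parts decomposition of~\(G - E_G(i)\) given by Lemma~\ref{lemmaRemoveClusterEdges} applied at node~\(i\) and then coarsen it to match the four families of parts claimed in the statement. Lemma~\ref{lemmaRemoveClusterEdges} produces the singletons~\(\{v\}\) for every~\(v \in X^i\) together with one part~\(G[V_\ell]\) per neighbor~\(i_\ell\) of~\(i\) in~\(T\), where \(V_\ell = \bigcup_{h \in V_\ell^T} X^h \setminus X^i\) and~\(V_\ell^T\) denotes the node set of the component of~\(T-i\) containing~\(i_\ell\). Since any union of disjoint parts of~\(G - E_G(i)\) is again a disjoint part, I only need to identify the claimed vertex sets as suitable unions of the~\(V_\ell\)'s and the singletons.

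I would split the neighbors of~\(i\) in~\(T\) into the at-most-two \emph{on-path} ones (namely~\(i^-\) when \(i \neq i_0\) and~\(i^+\) when \(i \neq j_0\)) and the remaining \emph{off-path} ones, which lie in~\(V(T_i) \setminus \{i\}\). Their sets~\(V_\ell^T\) together cover~\(V(T_i) \setminus \{i\}\). Property~(T3') from Definition~\ref{defTreeDec} implies that if a vertex~\(x\) lies in a cluster indexed by some~\(h \in V(T_i) \setminus \{i\}\) and also in some cluster~\(X^{i'}\) with~\(i' \in V_P\), then the path from~\(h\) to~\(i'\) in~\(T\) passes through~\(i\), which forces~\(x \in X^i\). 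Combined with \(X^i \subseteq R\) this gives
\[
\bigcup_{i_\ell \text{ off-path}} V_\ell \ = \ \left(\bigcup_{h \in V(T_i) \setminus \{i\}} X^h\right) \setminus X^i \ = \ \left(\bigcup_{j \in V(T_i)} X^j\right) \setminus R \ = \ S_i,
\]
identifying the~\(G[S_i]\) part.

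For the on-path pieces I would argue symmetrically on the two sides. The same (T3')-based argument shows that every vertex of~\(V_{\ell(i^+)}\) has its path node in~\(V_P^+\), and conversely every vertex with path node in~\(V_P^+\) lies in a cluster on the~\(j_0\)-side of~\(i\) and cannot belong to~\(X^i\) (otherwise its path node would be~\(i\) or closer to~\(i_0\)); hence \(V_{\ell(i^+)} = \bigcup_{j \in V_P^+}(R_j \cup S_j)\). The analogous argument on the~\(i^-\)-side only yields \(V_{\ell(i^-)} = \bigcup_{j \in V_P^-}(R_j \cup S_j) \setminus X^i\), because a vertex of~\(X^i\) may well have its path node in~\(V_P^-\). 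The gap is closed by the key observation that every~\(v \in X^i \setminus R_i\) satisfies~\(i \in I_v \cap V_P\) while its path node, by definition the node of~\(I_v \cap V_P\) closest to~\(i_0\), is different from~\(i\) and hence lies strictly closer to~\(i_0\), i.e.,~in~\(V_P^-\). Absorbing the singletons~\(\{v\}\) for~\(v \in X^i \setminus R_i\) into~\(V_{\ell(i^-)}\) thus produces exactly~\(\bigcup_{j \in V_P^-}(R_j \cup S_j)\), while the singletons~\(\{v\}\) for~\(v \in R_i\) remain isolated and form the first family of parts.

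Finally, the identifications \(\bigcup_{j \in V_P^-}(R_j \cup S_j) = \{1,\ldots,x^-\}\) and \(\bigcup_{j \in V_P^+}(R_j \cup S_j) = \{x^+,\ldots,n\}\) are immediate from the definition of the \(P\)-labeling: the blocks~\(R_j \cup S_j\) occupy consecutive labels along~\(V_P\) in the direction from~\(i_0\) to~\(j_0\), with~\(R_j\) at the top of its block, so~\(x^-\) is the last label before the~\(S_i\)-block and~\(x^+\) is the first label after the~\(R_i\)-block. I expect the main obstacle to be the two set-theoretic identifications of~\(V_{\ell(i^+)}\) and~\(V_{\ell(i^-)}\) above; once the (T3')-connectivity consequence and the path-node observation for~\(X^i \setminus R_i\) are in hand, everything else is a routine regrouping of Lemma~\ref{lemmaRemoveClusterEdges}'s partition together with a direct reading of the~\(P\)-labeling.
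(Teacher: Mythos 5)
Your proof is correct and follows the approach the paper indicates: the paper omits the proof entirely, stating only that Proposition~\ref{propCutPlabeling} is ``a direct consequence of Lemma~\ref{lemmaRemoveClusterEdges}'', and your argument carefully fills in exactly that derivation by regrouping the parts from Lemma~\ref{lemmaRemoveClusterEdges}, using~(T3') to place vertices and the definition of path nodes to absorb the singletons of~\(X^i\setminus R_i\) into the~\(V_P^-\) side.
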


We omit the proof of this proposition, as it is a direct consequence of Lemma~\ref{lemmaRemoveClusterEdges}. Observe that it does not suffice to remove all edges that intersect with~\(R_i\) instead of the edges in~\(E_G(i)\). Note that Proposition~\ref{propCutPlabeling} also implies that,  for every vertex~\(v \in R\), the cut~\( (B',W')\) in~\(G\) with \(B' := \{1,2, \ldots, v\}\) and~\(W' := V \setminus G\) cuts at most~\( t\Delta(G)\) edges.

\subsection{Case~1 in the Proof of Theorem~\ref{thmDoubleR}} 
\label{subsecCase1}

Suppose that there is an~\(x \in R\) with~\(N_m(x) \in R\).

This case is similar to Case~1 in the proof of Lemma~\ref{lemmaTreeBase2}. Let~\(B\) be the set of vertices between~\({x+1}\) and~\(N_m(x)\), \(W := V \setminus B\), and~\(Z := \emptyset\), see also Figure~\ref{figCase1}. In this figure as well as in the following figures, we will draw the path~\(P\) in the tree~\(T\) horizontally on the top and under each node~\(i \in V_P\), the set~\(R_i\) will be visualized by a circle and the set~\(S_i\) will be visualized by a trapezoid. Clearly,~\({ |B| =m }\). Let~\(i\) and~\(j\) be the path nodes of~\(x\) and~\(N_m(x)\), respectively. Applying Proposition~\ref{propCutPlabeling} once with node~\(i\) and once with node~\(j\) shows that \(e_G(B,W) \leq e_G(i) + e_G(j) \leq 2t\Delta(G)\). 

\begin{figure}
  \begin{center}
    \includegraphics{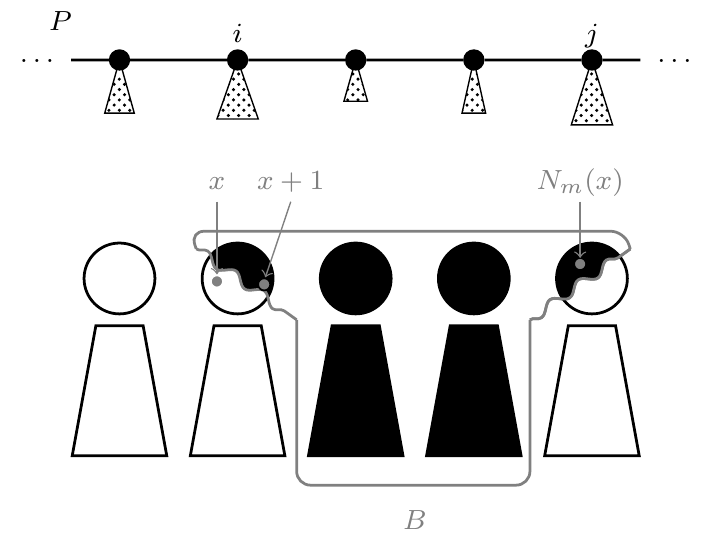}
  \end{center}
  \caption{The cut~\((B,W)\) in Case~1.  If a set is colored black, all its vertices are in the set~\(B\), if it is colored white, all its vertices are in the set~\(W\). The sets~\( R_i\) and~\(R_j\) can intersect~\(B\) as well as~\(W\).}
    \label{figCase1}
\end{figure}

\subsection{Case~2 in the Proof of Theorem~\ref{thmDoubleR}}
\label{subsecCase2}

Suppose that there is no~\(x \in R\) with~\(N_m(x) \in R\). 

Then,~\(N_m(x) \notin R\) and~\(N_m^{-1} (x) \notin R\) for every~\(x \in R\). Moreover, \( |R| = | N_m(R)| \leq |V \setminus R|\), which implies that~\(|R| \leq n/2 \) and hence 
 \begin{align} 
  \label{proofDoubleRBoundOnR}
  r \ = \ w^*(P,\X) \ =  \ \frac{|R|}{n} \ \leq \ \frac{1}{2}.
\end{align}

\subsubsection{Further Notation and Properties for Case~2}
\label{subsubsecCase2FurtherNotation}

If~\(i_0 \neq j_0\) and~\(i_0\) is not a neighbor of~\(j_0\) in~\(T\), then we denote by~\(T^+\) the graph obtained from~\(T\) by inserting the edge~\(\{i_0, j_0\}\). Otherwise we define~\(T^+ :=T\). The next proposition presents two observations that are easy to deduce using Proposition~\ref{propPartitionRiSi}.

\begin{prop}
  \label{propCaseIIPart1}
  In Case~2, the following statements hold:
  \begin{enumerate}[a)]
    \item For each~\(i\in V_P\), there is a node~\(j\in V_P\) such that \(N_m^{-1} (R_i) \subseteq S_j\) and \({N_m^{-1}(R_i) \cap S_{j'} = \emptyset}\) for every~\(j' \in V_P\) with~\(j' \neq j\). 
    \item For each~\(i\in V_P\), there is a node~\(j\in V_P\) such that \(N_m(R_i) \subseteq S_j\) and \({N_m(R_i) \cap S_{j'} = \emptyset}\) for every~\(j' \in V_P\) with~\(j' \neq j\).
  \end{enumerate}
\end{prop}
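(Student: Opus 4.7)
The plan is to combine the Case~2 hypothesis with the cyclic structure of the~\(P\)-labeling. The second claim in both parts (disjointness of~\(N_m(R_i)\) or~\(N_m^{-1}(R_i)\) from every~\(S_{j'}\) with~\(j'\neq j\)) follows at once from Proposition~\ref{propPartitionRiSi}(a), which tells us that the sets~\(\{S_{j'}: j' \in V_P\}\) are pairwise disjoint. So the whole work lies in showing the first claim, namely that~\(N_m(R_i)\) is contained in a \emph{single}~\(S_j\), and likewise for~\(N_m^{-1}(R_i)\).

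The first step will be to show that both~\(N_m(R_i)\) and~\(N_m^{-1}(R_i)\) are entirely contained in~\(S\). For part~b), every~\(x \in R_i \subseteq R\) satisfies~\(N_m(x) \notin R\) directly by the Case~2 hypothesis, so~\(N_m(R_i) \subseteq V \setminus R = S\). For part~a), if there were an~\(x \in R_i\) with~\(N_m^{-1}(x) \in R\), then setting~\(y := N_m^{-1}(x)\) we would have~\(y \in R\) and~\(N_m(y) = x \in R\), contradicting the Case~2 hypothesis; hence~\(N_m^{-1}(R_i) \subseteq S\) as well.

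The second step is to use the two bullets in the definition of the~\(P\)-labeling on page~\pageref{Plabeling}. The first bullet puts~\(R_i\) at the top of the block of consecutive labels~\(R_i \cup S_i\), so~\(R_i\) itself consists of consecutive labels. Since~\(N_m\) (resp.~\(N_m^{-1}\)) is the cyclic shift by~\(+m\) (resp.~\(-m\)) on~\([n]\), each of~\(N_m(R_i)\) and~\(N_m^{-1}(R_i)\) is again a cyclic consecutive interval of labels.

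The final step uses the global structure of the labeling. The second bullet in the definition of the~\(P\)-labeling orders the blocks~\(R_h \cup S_h\) along the path from~\(i_0\) to~\(j_0\), so reading the labels~\(1, 2, \ldots, n\) cyclically decomposes them into the~\(2|V_P|\) blocks
\[
S_{i_0},\ R_{i_0},\ S_{i_1},\ R_{i_1},\ \ldots,\ S_{j_0},\ R_{j_0},
\]
where by Proposition~\ref{propPartitionRiSi}(b) every~\(R_h\)-block is nonempty (while any~\(S_h\)-block may be empty). Consequently, between any two distinct~\(S_h\)- and~\(S_{h'}\)-blocks in the cyclic label order there lies at least one nonempty~\(R_j\)-block, and therefore any cyclic consecutive interval of labels that is entirely contained in~\(S\) must lie inside a single~\(S_j\). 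Applying this observation to~\(N_m(R_i)\) and~\(N_m^{-1}(R_i)\), which are cyclic consecutive and contained in~\(S\) by the first two steps, yields both parts of the proposition. There is no real obstacle; the only points to handle carefully are the cyclic wrap-around of labels (already built into the convention introduced after the definition of the~\(P\)-labeling) and the fact that the nonempty~\(R_j\)-separators genuinely block any cyclic~\(S\)-interval from crossing between distinct~\(S_j\)'s.
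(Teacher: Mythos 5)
Your proof is correct and follows the route the paper implies: the paper gives no explicit argument, merely noting the observation is easy to deduce from Proposition~\ref{propPartitionRiSi} after having just recorded that in Case~2 one has $N_m(x)\notin R$ and $N_m^{-1}(x)\notin R$ for every $x\in R$. Your three steps --- (1) both shifted sets lie in $S$ by the Case~2 hypothesis, (2) they are cyclic consecutive intervals because $R_i$ is a suffix of the consecutive block $R_i\cup S_i$ under the $P$-labeling, and (3) the nonempty $R_j$-blocks separate the $S_j$-blocks in the cyclic label order so a cyclic consecutive subset of $S$ cannot straddle two of them --- supply exactly the missing details.
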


\begin{figure}
  \begin{center}
    \includegraphics{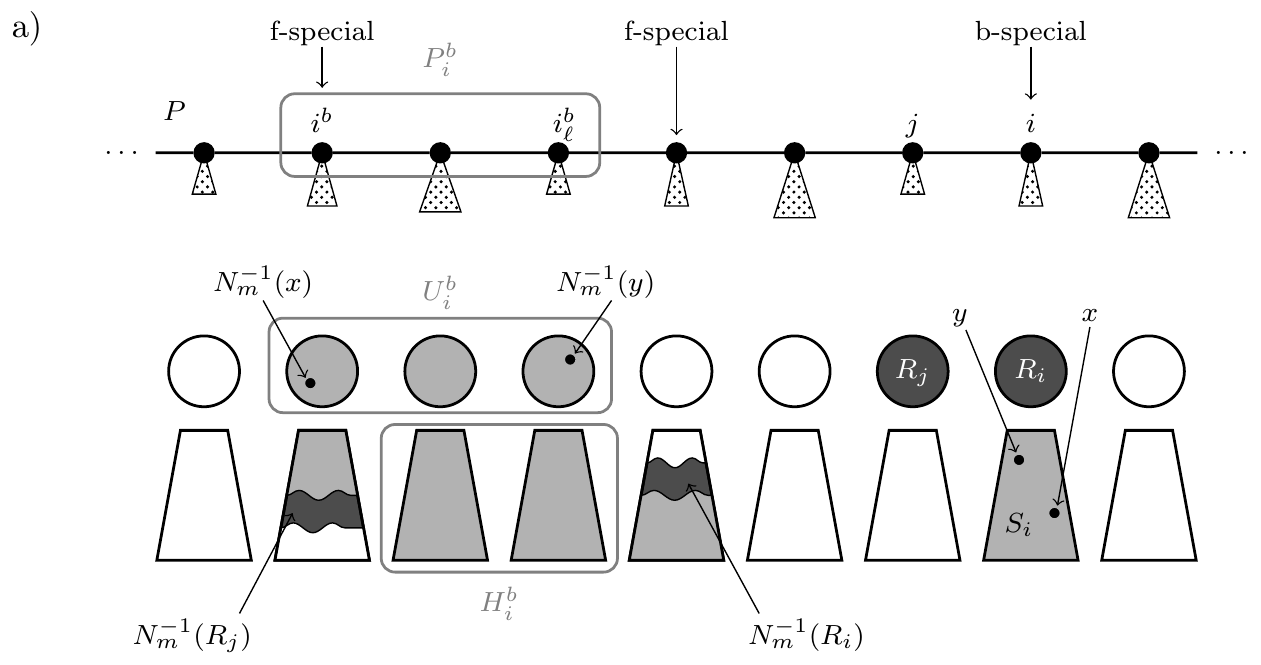}

    \vspace{1em}

    \includegraphics{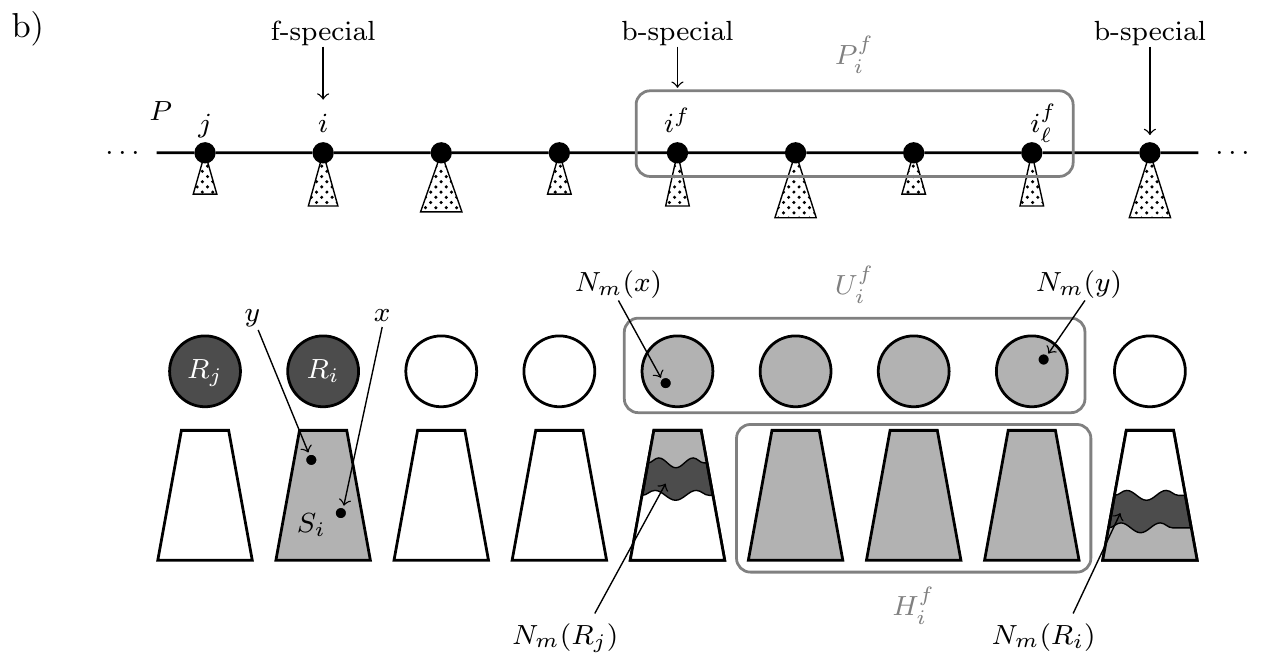}
  \end{center}
  \caption{Notation in Case~2. a)~A \mbox{b-special} node~\(i\in V_P\) and the sets~\(U_i^b\), \(P_i^b\), and~\(H_i^b\).  b)~An \mbox{f-special} node~\(i\in V_P\) and the sets~\(U_i^f\), \(P_i^f\), and~\(H_i^f\). In Part~a) and~b), the set~\(S_i\) and its image under~\(N_m^{-1}\) and~\(N_m\), respectively, are colored light gray. The sets~\(R_i\) and~\(R_j\) as well as their images under~\(N_m^{-1}\) and~\(N_m\), respectively, are colored dark gray.}
  \label{figFSpecialBSpecial}
\end{figure}

Next, the notion of special vertices from the proof of Lemma~\ref{lemmaTreeBase2} is extended to general graphs. A node~\(i\in V_P\) is called \emph{\mbox{b-special}} if the set~\(S_i\) contains a vertex~\(x\) with~\(N_m^{-1}(x) \in R\). A node~\(i\in V_P\) is called \emph{\mbox{f-special}} if~\(S_i\) contains a vertex~\(x\) with~\(N_m(x) \in R\).  For every~\(i\in V_P\), define
 \begin{align*}
  U_i^b :=  N_m^{-1} (S_i) \cap R \quad \quad \text{ and } \quad \quad  U_i^f :=  N_m(S_i) \cap R,
\end{align*}  
as well as
 \begin{align*}
  P_i^b := \left\{ j \in V_P : R_j \subseteq U_i^b \vphantom{U_i^f } \right\} \quad \quad \text{ and } \quad \quad  P_i^f := \left\{ j \in V_P : R_j \subseteq U_i^f \right\}.
\end{align*}  
Note that a node~\(i \in V_P\) is \mbox{b-special} if and only if~\(U_i^b \neq \emptyset\), and~\(i\in V_P\) is \mbox{f-special} if and only if~\(U_i^f \neq \emptyset\). See Figure~\ref{figFSpecialBSpecial} for a visualization of this and the next definitions. For each \mbox{b-special}~\(i\in V_P\), let~\(x\) be the smallest vertex in~\(S_i\) with~\(N_m^{-1}(x) \in R\) and let~\(i^b\) be the path node of~\(N_m^{-1} (x) \). Also, for each \mbox{b-special}~\(i\in V_P\), let~\(y\) be the largest vertex in~\(S_i\) with~\(N_m^{-1}(y) \in R\) and let~\(i_{\ell}^b\) be the path node of~\(N_m^{-1}(y)\). (We use~\(\ell\) as in large, as~\(R_{i_{\ell}^b}\) contains~\(N_m^{-1} (y)\).) Note that, if~\(i\) is \mbox{b-special}, then the nodes in~\(P_i^b\) induce a path or a cycle in~\(T^+\). If the nodes in~\(P_i^b\) induce a path in~\(T^+\), then the ends of this path are~\(i^b\) and~\(i^b_\ell\), which is similar to the tree case. Similarly, for each \mbox{f-special} node~\(i\in V_P\), let~\(x\) be the smallest vertex in~\(S_i\) with~\(N_m(x) \in R\) and let~\(i^f\) be the path node of~\(N_m(x)\). Also, let~\(y\) be the largest vertex in~\(S_i\) with~\(N_m(y) \in R\) and let~\(i_{\ell}^f\) be the path node of~\(N_m(y)\). Note that, if~\(i\) is \mbox{f-special}, then the nodes in~\(P_i^f\) induce a path or a cycle in~\(T^+\). If the nodes in~\(P_i^f\) induce a path in~\(T^+\), then the ends of this path are~\(i^f\) and~\(i^f_\ell\). Furthermore, define 
 \begin{align*}
  H_i^b = \bigcup_{j \in P_i^b \setminus \left\{i^b\right\}} S_j \quad \quad \text{ and } \quad \quad H_i^f = \bigcup_{j \in P_i^f \setminus \left\{i^f \right\}} S_j
\end{align*}  
for each b-special and f-special~\(i\in V_P\), respectively, and~\(H_i^b = \emptyset\) for each~\(i\in V_P\) that is not b-special as well as~\(H_i^f = \emptyset\) for each~\(i\in V_P\) that is not f-special. The next propositions state some properties about these sets as well as \mbox{b-special} and \mbox{f-special} nodes.

\begin{prop}
  \label{propCaseIIPart2}
  In Case~2, the following statements hold:
  \begin{enumerate}[a)]
    \item For each~\(i\in V_P\), there is a~\(j\in V_P\) such that~\(R_i \subseteq U_j^b\) and \(R_i \cap U_{j'}^b = \emptyset\) for every~\(j' \in V_P\) with~\(j' \neq j\). For each~\(i\in V_P\), there is a~\(j\in V_P \) such that~\(R_i \subseteq U_j^f\) and \( R_i \cap U_{j'}^f = \emptyset\) for every~\(j' \in V_P\) with~\(j' \neq j\). 
    \item \( \{ U_i^b : i \in V_P \} \) is a partition of~\(R\) and \( \{ U_i^f : i \in V_P \} \) is a partition of~\(R\).
    \item \( \{ P_i^b : i \in V_P, \text{ \(i\) is b-special} \} \) is a partition of~\(V_P\) and \\\( \{ P_i^f : i \in V_P, \text{ \(i\) is f-special} \} \) is a partition of~\( V_P\).
    \item For all~\(i\in V_P\), if~\(i \in P_i^b\) then~\(i = i^b\), and  if~\(i \in P_i^f\) then~\(i = i^f\).
    \item \(N_m (U_i^b \cup H_i^b) \subseteq S_i\) for all \mbox{b-special}~\(i\in V_P\) and \(N_m^{-1} (U_i^f \cup H_i^f) \subseteq S_i\) for all \mbox{f-special}~\({i\in V_P}\).
  \end{enumerate}
\end{prop}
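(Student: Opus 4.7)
The plan is to prove parts~(a)--(e) in order, with parts (a)--(c) following directly from Proposition~\ref{propCaseIIPart1} and the disjointness of the sets~\(R_i\) and~\(S_i\) (Proposition~\ref{propPartitionRiSi}), while parts (d) and (e) require a careful use of the structure of the \mbox{\(P\)-labeling}.

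For part~(a), since \(U_j^b = N_m^{-1}(S_j) \cap R\), the inclusion \(R_i \subseteq U_j^b\) is equivalent to \(N_m(R_i) \subseteq S_j\), which is exactly the content of Proposition~\ref{propCaseIIPart1}(b); the statement for~\(U^f\) uses Proposition~\ref{propCaseIIPart1}(a). Part~(b) is then immediate: coverage of~\(R\) follows from~(a) applied to each~\(R_i\) together with Proposition~\ref{propPartitionRiSi}(a), and disjointness follows because any \(x \in U_j^b \cap U_{j'}^b\) would force \(N_m(x) \in S_j \cap S_{j'}\), which is empty for~\(j \neq j'\). For part~(c), each \(j \in V_P\) lies in a unique~\(P_k^b\), namely the~\(k\) provided by~(a); since \(R_j \neq \emptyset\) by Proposition~\ref{propPartitionRiSi}(b), the corresponding~\(U_k^b\) is nonempty, so~\(k\) is \mbox{b-special}.

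For part~(d), suppose \(i \in P_i^b\), so that \(N_m(R_i) \subseteq S_i\). Let~\(v\) denote the vertex of~\(R_i\) carrying the smallest label; by the \mbox{\(P\)-labeling} rule that the labels of~\(R_i \cup S_i\) are consecutive with~\(S_i\) first, \(v\) sits immediately after the largest label of~\(S_i\). I claim that \(N_m(v)\) is the smallest element of~\(S_i\) whose preimage under~\(N_m\) lies in~\(R\). Indeed, for any \(x \in S_i\) strictly smaller than~\(N_m(v)\) in label order, the difference~\(N_m(v)-x\) satisfies \(1 \leq N_m(v)-x \leq |S_i|-1\), so \(N_m^{-1}(x) = v - (N_m(v)-x)\) lies strictly between the smallest label of~\(S_i\) and~\(v\), hence inside~\(S_i\) and outside~\(R\). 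Consequently the smallest such~\(x\) is~\(N_m(v)\), whose preimage~\(v\) has path node~\(i\), yielding \(i^b = i\); the \mbox{f-special} statement follows by symmetry.

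For part~(e), the inclusion \(N_m(U_i^b) \subseteq S_i\) is immediate from the definition of~\(U_i^b\), so it suffices to prove \(N_m(S_j) \subseteq S_i\) for each \(j \in P_i^b \setminus \{i^b\}\). The preimage set \(N_m^{-1}(S_i)\) is a cyclic arc of~\(|S_i|\) consecutive labels modulo~\(n\), and part~(a) forces every~\(R_k\) to be either entirely inside or entirely outside this arc. Consequently \(P_i^b\) corresponds to a cyclically contiguous run of path nodes~\(j_1, j_2, \ldots, j_p\) in \mbox{\(P\)-labeling} order, and the argument from~(d) identifies~\(i^b\) as~\(j_1\), the one whose~\(R\)-part appears first in the arc. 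For \(k \geq 2\), the set~\(S_{j_k}\) sits in label order strictly between~\(R_{j_{k-1}}\) and~\(R_{j_k}\), both of which lie in the arc, so~\(S_{j_k}\) itself lies in the arc and therefore \(N_m(S_{j_k}) \subseteq S_i\). The main subtlety to watch out for is the wrap-around case when the arc crosses between labels~\(n\) and~\(1\), but since the \mbox{\(P\)-labeling} convention identifies labels modulo~\(n\) and the notion of ``between'' is cyclic, the same reasoning goes through without change.
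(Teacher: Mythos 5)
Your proof is correct and follows essentially the same approach as the paper: parts (a)--(c) are derived from Proposition~\ref{propCaseIIPart1} and Proposition~\ref{propPartitionRiSi} exactly as in the paper, and parts (d) and (e) use the same label-arithmetic ideas (the paper picks an arbitrary \(v\in S_i\) with \(N_m^{-1}(v)\in R_i\) whereas you pick \(v=\min R_i\), and the paper phrases (e) as ``between \(N_m^{-1}(x)\) and \(N_m^{-1}(y)\)'' whereas you work directly with the cyclic arc \(N_m^{-1}(S_i)\) and the contiguity of \(P_i^b\), which the paper also notes just before the proposition). Your write-up is more detailed than the paper's rather terse justifications of (d) and (e), but the underlying argument is the same.
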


\begin{proof}
  \begin{enumerate}[a)]
    \item[]
    \item Follows easily from Proposition~\ref{propCaseIIPart1}. 

    \item Part~a) implies that \(R = \bigcup_{i \in V_P} U_i^b = \bigcup_{i \in V_P} U_i^f\), while~\(R = \bigcupdot_{i \in V_P} R_i\) by Proposition~\ref{propPartitionRiSi}a). From these two, the statements follow.
    
    \item First, recall that Proposition~\ref{propPartitionRiSi}b) says that~\(R_i \neq \emptyset\) for all~\(i\in V_P\). Furthermore, Part~a) implies that~\(P_i^b \neq \emptyset\) if and only if~\(i\) is \mbox{b-special}, as well as~\(P_i^f \neq \emptyset\) if and only if~\(i\) is \mbox{f-special}. Now, the statement follows from Part~a) and Part~b).

    \item Let~\(i\) be an arbitrary node in~\(P\) satisfying~\(i \in P_i^b\) and recall that~\(R_j \neq \emptyset\) for all~\(j\in V_P\) by Proposition~\ref{propPartitionRiSi}b). Now,~\(i\) must be \mbox{b-special}, because otherwise~\(U_i^b = \emptyset\) and also~\(P_i^b = \emptyset\).  Moreover,~\(i \in P_i^b\) can only happen if~\(R_i \subseteq U_i^b\), i.e., \({R_i \subseteq N_m^{-1} (S_i)}\). Since~\(R_i \neq \emptyset\), the set~\(S_i\) contains a vertex~\(v\) such that~\({N_m^{-1}(v) \in R_i}\). Hence,~\(N_m^{-1} (w) \) is in~\(R_i \cup S_i\) for every~\(w \in S_i\), which is smaller than~\(v\). Consequently,~\(i^b = i\). The second part follows analogously.

    \item Assume that~\(i \in V_P\) is \mbox{b-special}. Note that every vertex in \(U_i^b \cup H_i^b \) is between~\(N_m ^{-1}(x)\) and~\(N_m^{-1} (y)\), where~\(x\) and~\(y\) are the smallest and the largest vertex in~\(S_i\) with~\(N_m^{-1}(x) \in R\) and~\(N_m^{-1}(y) \in R\), respectively. Therefore, the first inclusion is satisfied.  The second one follows similarly. \qedhere
  \end{enumerate}
\end{proof}

\begin{prop}
  \label{propBSpecialFSpecial}
  In Case~2, the following statements hold:
  \begin{enumerate}[a)]
    \item For every \mbox{b-special}~\(i\in V_P\), the node~\(i^b\) is \mbox{f-special}.
    \item For every \mbox{f-special}~\(i\in V_P\), the node~\(i^f\) is \mbox{b-special}. 
    \item A node~\(i\in V_P\) is \mbox{b-special} if and only if there is an \mbox{f-special} node~\(j\in V_P\) such that~\(i = j^f\).
    \item A node~\(i\in V_P\) is \mbox{f-special} if and only if there is a \mbox{b-special} node~\(j\in V_P\) such that~\(i = j^b\). 
  \end{enumerate}
\end{prop}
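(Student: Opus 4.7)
The plan is to prove parts~(a) and~(c); parts~(b) and~(d) then follow by the duality that replaces $m$ with $n-m$, which swaps $N_m \leftrightarrow N_m^{-1}$ and therefore exchanges the notions of b-special and f-special (turning $i^b$ into $i^f$).

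For part~(a), fix $i$ b-special, let $x \in S_i$ be the smallest label with $N_m^{-1}(x) \in R$, and set $y := N_m^{-1}(x) \in R_{i^b}$. My goal is to exhibit a vertex $z \in S_{i^b}$ with $N_m(z) \in R$. The first step is to show $y$ is the smallest-labeled vertex of $R_{i^b}$: Proposition~\ref{propCaseIIPart1}b) applied to $R_{i^b}$, together with $N_m(y) = x \in S_i$, forces $N_m(R_{i^b}) \subseteq S_i$, and any $y' \in R_{i^b}$ smaller than $y$ would produce $N_m(y') \in S_i$ of smaller label than $x$, contradicting the minimality of $x$. Next, let $j$ be the position of $x$ within $S_i$ counted from its smallest label, so that $x - j$ is the largest label of the $R$-block immediately preceding $S_i$ on the cycle (namely $R_{i^-}$, with the usual wrap-around when $i = i_0$); in particular $x - j \in R$. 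By the minimality of $x$, for each $k \in \{1, \ldots, j - 1\}$ we have $x - k \in S_i$ and hence $y - k = N_m^{-1}(x-k) \notin R$; moreover $y - j \notin R$ as well, by Case~2 applied to $x - j \in R$. Since $y = \min R_{i^b}$, each of the labels $y - 1, \ldots, y - j$ must lie in $S_{i^b}$ (descending further would enter the preceding $R$-block, contradicting non-$R$ membership). Hence $|S_{i^b}| \geq j$, and $z := y - j \in S_{i^b}$ satisfies $N_m(z) = x - j \in R_{i^-} \subseteq R$, proving $i^b$ is f-special.

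For part~(c), the backward direction is immediate from~(b). For the forward direction, given $i$ b-special, I would take $j$ to be the path node of $\min R_i - m$ (read cyclically, noting this label lies in $S$ by Case~2) and verify that $j$ is f-special with $j^f = i$ by an argument parallel to the one above, this time scanning $S_j$ from its smallest label: the goal is to show $\min R_i - m$ is the first label of $S_j$ whose $N_m$-image lies in $R$, whence $j^f$ equals the path node of $\min R_i$, which is $i$. The verification rests on the same $P$-labeling structure and the cyclic monotonicity of $N_m$, plus the observation that the hypothesis ``$i$ b-special'' controls the position of $\min R_i - m$ within $S_j$.

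The main obstacle I anticipate is the careful handling of cyclic wrap-around: the $P$-labeling orders the blocks linearly, but $N_m$ acts as a rotation on $[n]$, so statements like ``the position of $x$ in $S_i$'' and ``the label just preceding $y$'' require an unambiguous cyclic interpretation, particularly when a block abuts label~$1$ or~$n$ or when the endpoints $i_0$ and $j_0$ come into play (this is the reason the paper works in $T^+$ and treats $P$ as a cycle). The underlying monotonicity of label arithmetic along the cycle ensures that each step above extends smoothly once a consistent cyclic orientation is fixed.
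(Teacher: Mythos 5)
Your proof is correct. In part~(a) you land on essentially the same witness as the paper, namely $z=N_m^{-1}(\max R_{i^-})\in S_{i^b}$ with $N_m(z)\in R$: you reach it via Proposition~\ref{propCaseIIPart1}b) applied to $R_{i^b}$ (to pin down $y=\min R_{i^b}$) followed by the label scan, while the paper applies Proposition~\ref{propCaseIIPart1}a) to $R_{i^-}$ and notes that $N_m^{-1}(R_{i^-})$ must sit inside $S_{i^b}$ — two sides of the same coin. The duality reduction of (b) and (d) to (a) and (c) by replacing $m$ with $n-m$ (which swaps $N_m\leftrightarrow N_m^{-1}$, b-/f-special, and $i^b\leftrightarrow i^f$, while the $P$-labeling and the sets $R_i,S_i$ are untouched) is a clean organizational move the paper does not make explicit; it just says ``similar'' and ``analogous.'' The genuine divergence is in part~(c). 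The paper proves the forward direction by counting: $i\mapsto i^b$ is injective on b-special nodes (Proposition~\ref{propCaseIIPart2}c)) and maps into f-special nodes by (a), and dually $j\mapsto j^f$, so both maps are bijections and every b-special node arises as some $j^f$. You instead construct an explicit witness $j$, the path node of $N_m^{-1}(\min R_i)$, and argue $j^f=i$. That does work, and the one step your sketch leaves implicit is precisely where ``$i$ b-special'' enters: with $x_0$ the smallest vertex of $S_i$ satisfying $N_m^{-1}(x_0)\in R$, the label $N_m^{-1}(x_0)$ lies in $R$ and hence not in $S_j$, and since $N_m^{-1}(x_0)<z:=N_m^{-1}(\min R_i)\in S_j$ this forces $\min S_j>N_m^{-1}(x_0)$; consequently every $z'\in S_j$ with $z'<z$ has $x_0<N_m(z')\le\max S_i$, so $N_m(z')\in S_i\subseteq S$ and not in $R$, making $z$ the smallest vertex of $S_j$ whose $N_m$-image lies in $R$ and giving $j^f=i$. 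Your constructive route produces an explicit pre-image and stands alone, whereas the paper's counting argument is shorter but leans on (a), (b), and the injectivity from Proposition~\ref{propCaseIIPart2}c).
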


\begin{proof}
  \begin{enumerate}[a)]
    \item[]
    \item Let~\(i\in V_P\) be an arbitrary \mbox{b-special} node and denote by~\(j\)  the node before~\(i\) on~\(P\). By Proposition~\ref{propCaseIIPart1}a), there is a unique node~\(h\in V_P\) such that \(N_m^{-1} (R_j) \subseteq S_h\). As~\(R_j \neq \emptyset\) by Proposition~\ref{propPartitionRiSi}b), the node~\(h\) is \mbox{f-special}. As~\(i\) is \mbox{b-special},~\(h\) must be~\(i^b\).

    \item Similar to~a) by considering the node~\(j\) before~\(i\) on~\(P\) and applying Proposition~\ref{propCaseIIPart1}b) to~\(R_j\). 

    \item First, if there is an \mbox{f-special} node~\(j\in V_P\) such that~\(i = j^f\), then Part~b) shows that~\(i\) is \mbox{b-special}.  Furthermore,
     \begin{align*}
      & \left| \{i \in V_P: \text{\(i\) is b-special} \} \right| \  = \ \left| \{ i^b: i \in V_P, \text{ \(i\) is b-special} \} \right| \\
      \stackrel{a)}{\leq} \ &\left| \{ j \in V_P: \text{\(j\) is f-special} \} \right| \ 
      = \ \left| \{ j^f: j \in V_P, \text{ \(j\) is f-special} \} \right| \\
      \stackrel{b)}{\leq} \ & \left| \{ i \in V_P: \text{\(i\) is b-special} \} \right|, 
    \end{align*}  
    where the first equality holds because Proposition~\ref{propCaseIIPart2}c) implies that~\(i^b \neq j^b\) for two distinct \mbox{b-special} nodes~\(i, j\in V_P\) and the second equality holds analogously. Now, both inequalities must be equalities and the statement follows.

    \item Analog to Part~c). \qedhere

  \end{enumerate}
\end{proof}
  
\subsubsection{Accounting for Case~2}
\label{subsubsecCase2Accounting}

Proposition~\ref{propBSpecialFSpecial}c) and~d) imply
 \begin{align} 
  \label{proofDoubleRSumSi}
  \sum_{ \substack{j \in V_P: \\ j \text{ is \mbox{f-special}} }} \left| S_{j^f} \right| \ =  \sum_{ \substack{i \in V_P : \\ i \text{ is \mbox{b-special}} } } \left| S_i \right| \quad \quad \text{ and } \quad  \quad   \sum_{ \substack{j \in V_P: \\ j \text{ is \mbox{b-special}} }} \left| S_{j^b} \right| \ =  \sum_{ \substack{i \in V_P : \\ i \text{ is \mbox{f-special}} } } \left| S_i \right|.
\end{align}  
As~\(U_i^b\subseteq R\) and~\(H_i^b \subseteq S\) are disjoint, and similarly~\(U_i^f\) and~\(H_i^f\) are disjoint, Proposition~\ref{propCaseIIPart2}e) implies 
 \begin{align} 
  \label{proofDoubleRBFInclusion}
  \left| U_i^b \vphantom{U_i^f} \right| + \left| H_i^b \vphantom{U_i^f} \right|  \leq \left| S_i \vphantom{U_i^f}  \right| \quad \quad \text{ and } \quad \quad   \left| U_i^f \right| + \left| H_i^f \right|  \leq \left| S_i\vphantom{H_i^f} \right| \quad \quad  \text{ for every~\(i\in V_P\).}
\end{align}  
Recall that \(w(P, \X) = |R| = rn\) and hence~\(|S| = (1-r)n\). Proposition~\ref{propPartitionRiSi}a) and Proposition~\ref{propCaseIIPart2}c) imply that every vertex~\(v\in S\) is in exactly one set~\( H_i^b \cupdot S_{i^b} \) for some \mbox{b-special}~\(i\in V_P\) and in exactly one set~\(H_i^f \cupdot S_{i^f} \) for some \mbox{f-special}~\(i\in V_P\). Therefore,
 \begin{align} 
  \label{proofDoubleRBspecialPartition1}
  (1-r)n \ = \ |S| \ &= \ \sum_{\substack{i \in V_P: \\ i \text{ is \mbox{b-special}}}} \left( \left| S_{i^b}\vphantom{H_i^b} \right| + \left| H_i^b \right| \right) \quad \quad \text{ and } \\
  \label{proofDoubleRFspecialPartition1}
  (1-r)n \ = \ |S| \ &= \ \sum_{\substack{i \in V_P: \\ i \text{ is \mbox{f-special}}}} \left( \left| S_{i^f} \vphantom{H_i^f} \right| + \left| H_i^f \right| \right). \quad 
\end{align}  
Furthermore, Proposition~\ref{propCaseIIPart2}b) implies 
 \begin{align} 
  \label{proofDoubleRPartitionR}
  rn \ = \ |R| \ =  \sum_{ \substack{i \in V_P: \\ i \text{ is \mbox{b-special}} }} \left| U_i^b\right| \quad \quad\ \text{and} \quad \quad
  rn \ = \ |R| \ =  \sum_{ \substack{i \in V_P: \\ i \text{ is \mbox{f-special}} }} \left| U_i^f\right|.
\end{align}  
Now, Equations~\eqref{proofDoubleRBspecialPartition1}-\eqref{proofDoubleRPartitionR} imply
 \begin{align*}
  & \sum_{\substack{ i \in V_P: \\ i \text{ is \mbox{b-special}}}} \hspace{-0.5em} \left( \left| S_{i^b}\vphantom{H_i^f} \right| + \left| H_i^b \vphantom{H_i^f} \right| \right) \ + \sum_{\substack{ i \in V_P: \\ i \text{ is \mbox{f-special}}}} \hspace{-0.5em} \left( \left| S_{i^f} \vphantom{H_i^f} \right| + \left| H_i^f \right| \right) \ \ = \ \ 2 |S| \\
  = \ \ & \frac{1-r}{r} \ 2 |R| \ \ = \ \ \frac{1-r}{r} \left( \sum_{\substack{ i \in V_P: \\ i \text{ is \mbox{b-special}}}}  \left| U_i^b \vphantom{U_i^f} \right| \ + \sum_{\substack{ i \in V_P: \\ i \text{ is \mbox{f-special}}}}  \left| U_i^f \right|  \right) .
\end{align*}  
Now, we use~\eqref{proofDoubleRSumSi} to rearrange the terms in the sums on the left side, so that both sums use~\(|S_i|\) instead of~\( |S_{i^b}|\) and~\( |S_{i^f}|\), respectively, and obtain
 \begin{align*}
  & \sum_{\substack{ i \in V_P: \\ i \text{ is \mbox{b-special}}}} \hspace{-0.5em} \left( \left| S_{i} \vphantom{H_i^f}\right| + \left| H_i^b \vphantom{H_i^f} \right| \right) \ + \sum_{\substack{ i \in V_P: \\ i \text{ is \mbox{f-special}}}} \hspace{-0.5em} \left( \left| S_{i} \vphantom{H_i^f} \right| + \left| H_i^f \vphantom{H_i^f} \right| \right)  
  =  \frac{1-r}{r} \ \left( \sum_{\substack{ i \in V_P: \\ i \text{ is \mbox{b-special}}}}  \left| U_i^b \vphantom{U_i^f} \right| \ + \sum_{\substack{ i \in V_P: \\ i \text{ is \mbox{f-special}}}}  \left| U_i^f \right| \right) .
\end{align*}  
As there is at least one \mbox{b-special} node~\(i \in V_P\) and at least one \mbox{f-special} node~\(i \in V_P\) in Case~2, the following proposition is obtained, which is similar to~\eqref{proofLemmaBase2_SpecialVertex} in the proof of Lemma~\ref{lemmaTreeBase2}.
\begin{prop}
  \label{propSpecialNodeForCase2ab}
  In Case~2, one of the following exists:
  \begin{enumerate}[a)]
    \item a \mbox{b-special} node~\(i\in V_P\) such that \( |S_i| + |H_i^b| \leq \left( \tfrac{1}{r} - 1 \right) | U_i^b| \), or
    \item an \mbox{f-special} node~\(i\in V_P\) such that \( |S_i| + |H_i^f| \leq \left( \tfrac{1}{r} -1 \right) | U_i^f| \).
  \end{enumerate}
\end{prop}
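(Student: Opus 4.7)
The plan is to derive Proposition~\ref{propSpecialNodeForCase2ab} directly from the equality established immediately above it by a standard averaging (pigeonhole) argument. Specifically, the accounting just before the proposition shows that
\[
\sum_{\substack{i \in V_P: \\ i \text{ is b-special}}} \bigl(|S_i| + |H_i^b|\bigr) \ + \sum_{\substack{i \in V_P: \\ i \text{ is f-special}}} \bigl(|S_i| + |H_i^f|\bigr) \ = \ \left(\tfrac{1}{r} - 1\right) \left( \sum_{\substack{i \in V_P: \\ i \text{ is b-special}}} |U_i^b| \ + \sum_{\substack{i \in V_P: \\ i \text{ is f-special}}} |U_i^f| \right),
\]
using that~\(\frac{1-r}{r} = \frac{1}{r}-1\). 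So I would simply observe that if neither~a) nor~b) held, then every summand on the left would strictly exceed the corresponding~\(\left(\frac{1}{r}-1\right)\) multiple of the matching summand on the right.

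First I would argue for contradiction by assuming that for every b-special~\(i \in V_P\) one has~\(|S_i| + |H_i^b| > (\tfrac{1}{r}-1)|U_i^b|\), and that for every f-special~\(i \in V_P\) one has~\(|S_i| + |H_i^f| > (\tfrac{1}{r}-1)|U_i^f|\). Next, I would add these strict inequalities across all b-special and f-special nodes. The key point justifying that a strict inequality survives the summation is that both index sets are nonempty: we are in Case~2, and the hypothesis of Case~2 implies by Proposition~\ref{propCaseIIPart1} (together with Proposition~\ref{propPartitionRiSi}b), which guarantees~\(R_i \neq \emptyset\) for every~\(i\in V_P\)) that the vertex set~\(R\) is nonempty and that each~\(R_j\) lies entirely in some~\(S_i\) under~\(N_m^{-1}\) and under~\(N_m\); hence at least one b-special and at least one f-special node exist (this is precisely the remark made in the paragraph preceding the proposition). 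With at least one term in each sum, the strict inequality is preserved after summation, contradicting the displayed equality.

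The argument is essentially one line once the bookkeeping above has been done, so I do not anticipate a substantive obstacle; the main thing to be careful about is ensuring that there is indeed at least one summand in each of the two sums, so that strict inequality is preserved when adding. Everything else — the manipulation of~\(\frac{1-r}{r}\), the disjointness statements in \eqref{proofDoubleRBFInclusion}, and the partition identities \eqref{proofDoubleRBspecialPartition1}--\eqref{proofDoubleRPartitionR} — has already been established in the preceding accounting, so no further combinatorial work is needed.
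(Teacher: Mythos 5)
Your proposal is correct and takes essentially the same route as the paper: the paper also derives the proposition from the displayed equality by averaging, remarking only that at least one b-special and at least one f-special node exist in Case~2, and your contradiction formulation is just a spelled-out version of that pigeonhole step.
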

When constructing the cut in the next paragraphs, we treat the cases stated in the last proposition separately.

\subsubsection{Case~2a} 
\label{subsubsecCase2a}

There is a \mbox{b-special} node~\(i\in V_P\) such that \( |S_i| + |H_i^b| \leq \left( \tfrac{1}{r} - 1 \right) | U_i^b| \). 

Replacing~\(|S_i|\) with~\eqref{proofDoubleRBFInclusion}, we deduce that \(|U_i ^b| + 2 |H_i^b| \leq \left( \tfrac{1}{r} - 1 \right) |U_i^b| \), which implies
 \begin{align} 
  \label{proofDoubleRCase2aH3}
  \left|U_i^b\right| + \left|H_i^b\right| \ \leq \ \frac{1}{2r}  \left|U_i^b \right|. 
\end{align}  

Let \(Z := U_i^b \cupdot H_i^b \), which satisfies~\(Z \neq \emptyset\) as~\(i\) is \mbox{b-special} and hence~\(U_i^b \neq \emptyset\). Furthermore,~\(Z\) and~\(S_i\) are disjoint. Indeed, assume there is a vertex~\(x \in S_i \cap Z\). As~\(U_i^b \subseteq R\), we have~\(x \in S_i \cap H_i^b\). Due to Proposition~\ref{propPartitionRiSi}a) this implies that~\(i \in P_i^b\) and~\(i \neq i^b\), which contradicts Proposition~\ref{propCaseIIPart2}d). Hence,~\(S_i\) and~\(Z\) are disjoint and, with~\eqref{proofDoubleRBFInclusion}, it follows that~\(|Z| \leq n / 2\) as desired. For the tree decomposition~\( (T',\X')\) of~\(G[Z]\) required for Option~2 of Theorem~\ref{thmDoubleR}, we can use the restriction of~\( (T,\X)\) to~\(T' := T\) and~\(G[Z]\), which is indeed a tree decomposition of~\(G[Z]\) of width at most~\(t-1\).  Furthermore, define~\(P' := P\). Using that each vertex in~\(U_i^b\) is in~\(R \cap Z\) and hence contributes to the weight of~\(P'\) with respect to~\(\X'\), we obtain that~\(w(P', \X') \geq |U_i^b|\). Using that~\({|Z| \leq |U_i^b|/(2r)}\) by~\eqref{proofDoubleRCase2aH3}, it follows that~\(w^*(P',\X') \geq 2r\). Contract edges of~\(T'\) and~\(P'\) accordingly to turn~\((T',\X')\) into a nonredundant tree decomposition without increasing its width and note that~\(w^*(P',\X')\) is not decreased. Then~\(P'\) is a nonredundant path in~\((T',\X')\).

\begin{figure}
  \begin{center}
    \includegraphics{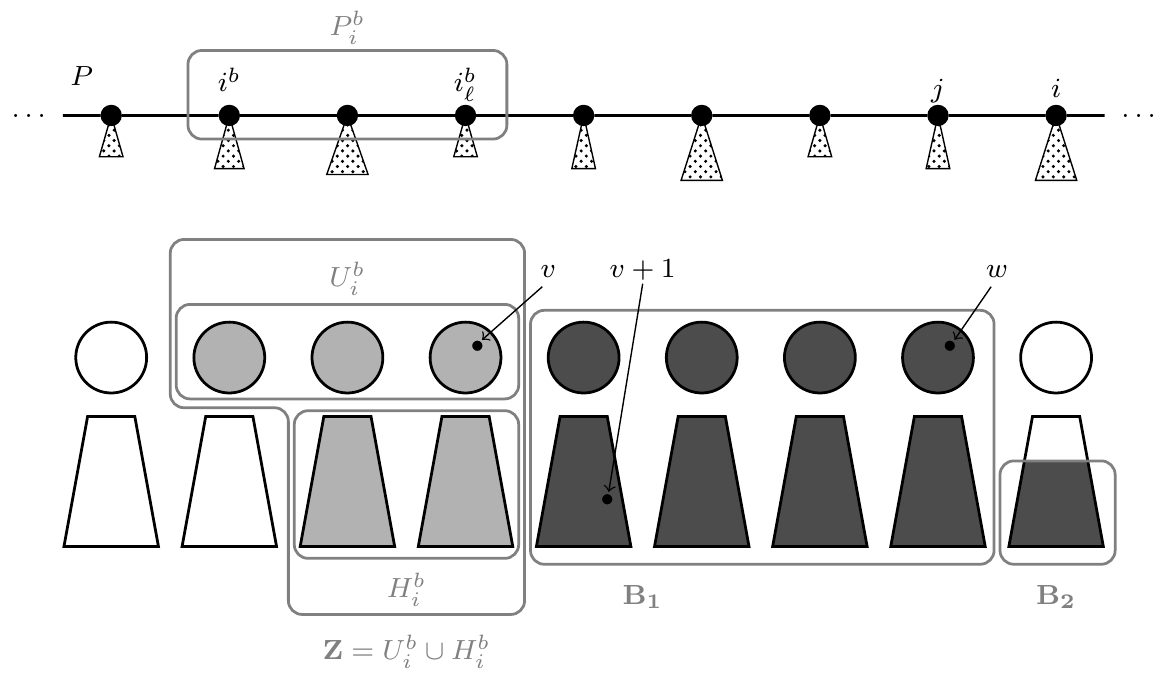}
  \end{center}
  \caption{Cut~\( (B,W,Z) \) with ~\(B=B_1 \cup B_2\) in Case~2a.}
  \label{figCase2a}
\end{figure}

Next, we will define the set~\(B\) for the cut~\( (B,W,Z)\) in~\(G\), see also Figure~\ref{figCase2a}. Let~\(j\) be the node before~\(i\) on~\(P\). Let~\(v\) be the largest vertex in~\(R_{i_\ell^b}\) and let~\(w\) be the largest vertex in~\(R_j\). If~\(v=w\), i.e.,~\(i^b_{\ell} = j\), define~\(B_1 := \emptyset\). Otherwise, define 

\[ B_1 := \{ u \in V: u \text{ is between } v+1 \text{ and } w\}.\] 
Let~\(\tilde{m} := m - |B_1|\). As~\(N_m(v)\) is in~\(S_i\), we know that \( |S_i| \geq \tilde{m} \geq 1\). Let~\((\tilde{T},\tilde{\X})\) be the restriction of~\((T,\X)\) to~\(\tilde{T}:=T\) and~\(G[S_i]\), and define \(c  := 1 - \frac{r}{1-r} = 2 -\frac{1}{1-r}\), which satisfies~\(0 \leq c < 1\) because of~\eqref{proofDoubleRBoundOnR}. If~\(c=0\), let~\(B_2 :=\emptyset\) and~\(W_2 := S_i\) and note that the cut~\( (B_2, W_2)\) in~\(G[S_i]\) satisfies~\(e_{G[S_i]} (B_2, W_2) = 0 \leq \log ( 2/r) \cdot t \Delta(G)\) and~\(c \tilde{m} \leq |B_2| \leq \tilde{m}\). Otherwise, apply Lemma~\ref{lemmaApproxCut} to~\(G[S_i]\) with the tree decomposition~\((\tilde{T},\tilde{\X})\), the size parameter~\(\tilde{m} = m - |B_1|\), and~\(c\)  to obtain a cut~\( (B_2, W_2)\) satisfying~\(B_2 \cupdot W_2 = S_i\), and \( c\tilde{m} \leq |B_2| \leq \tilde{m}\), as well as
 \begin{align} 
  \label{proofDoubleRCutB2}
  e_{G[S_i]} (B_2, W_2)  \  \leq  \ \Bceil{ \log \tfrac{1-r}{r} }t\Delta(G) 
  \ \leq \ \left( \log \tfrac{2(1-r)}{r} \right) t \Delta(G) \ \leq \ \log \left(\tfrac{2}{r}\right) \cdot t \Delta(G) .   
\end{align}  
Define~\(B := B_1 \cup B_2\). As~\(B_2 \subseteq S_i\), it contains no vertex from~\(B_1\), and therefore 
 \begin{align*}
  m - |B| \  = \ (m - |B_1|) - |B_2| \ \leq \ \tilde{m} - c \tilde{m} \ \leq \ (1-c) |S_i| \ = \ \tfrac{r}{1-r} |S_i| \ \leq \ | U_i ^b| \ \leq \ |Z|,
\end{align*}  
where the second to last inequality holds by the hypothesis of Case~2a. So, \( |B| \leq m \leq |B| + |Z|\) and the sets~\(B\) and~\(Z\) are disjoint by construction. Let~\( W := V \setminus ( B \cupdot Z)\). 

To finish Case~2a, we need to determine the width of the cut~\( (B,W,Z) \) in~\(G\). Applying Proposition~\ref{propCutPlabeling} with~\(i^b\), \(i^b _{\ell}\), and~\(i\) shows that

\[E_G (Z, B_1, S_i, W\setminus W_2) \ \subseteq \ E_G(i^b) \cup E_G(i^b_{\ell}) \cup E_G(i).\] 
Using that~\(B= B_1 \cupdot B_2\) and~\(S_i = B_2 \cupdot W_2\) as well as~\eqref{proofDoubleRCutB2}, we obtain

\[e_G( Z, B, W ) \ \leq \ 3t \Delta(G) + \log \left(\tfrac{2}{r}\right) \cdot t \Delta(G) \ \leq \ \log \left(\tfrac{16}{r}\right) \cdot t \Delta(G),\] 
as desired.

\subsubsection{Case~2b} 
\label{subsubsecCase2b}

There is an \mbox{f-special} node~\(i\in V_P\) such that \( |S_i| + |H_i^f| \leq \left( \tfrac{1}{r} - 1 \right) |U_i^f| \).

This case is similar to Case~2a but not completely analogous as we cannot simply reverse the labeling to obtain the situation of Case~2a, because this would violate the property that the vertices in~\(R_j\) receive the largest labels among all vertices in~\(R_j\cup S_j\) for every~\(j\in V_P\). Similarly to Case~2a,  we can replace~\(|S_i|\) in~\(|S_i| + |H_i^f| \leq \left(\tfrac{1}{r} - 1 \right)|U_i^f|\) with~\eqref{proofDoubleRBFInclusion} and derive~\( | U_i^f | + |H_i^f| \leq   | U_i^f | / (2r)\). Define \(Z := U_i^f \cupdot H_i^f\) and deduce analogously to Case~2a that~\(Z \neq \emptyset\) and~\( |Z| \leq n/2\) as~\(S_i\) and~\(Z\) are disjoint. As in Case~2a, let~\( (T',\X')\) be the restriction of~\( (T,\X)\) to~\(T' := T\) and~\(G[Z]\), define~\(P' := P\), and contract edges of~\(T'\) and~\(P'\) such that~\((T',\X')\) is nonredundant. Then~\(P'\) is nonredundant with respect to~\(\X'\) and~\(w^*(P',\X') \geq |U_i^f|/|Z| \geq 2r \). 

\begin{figure}
  \begin{center}
    \includegraphics{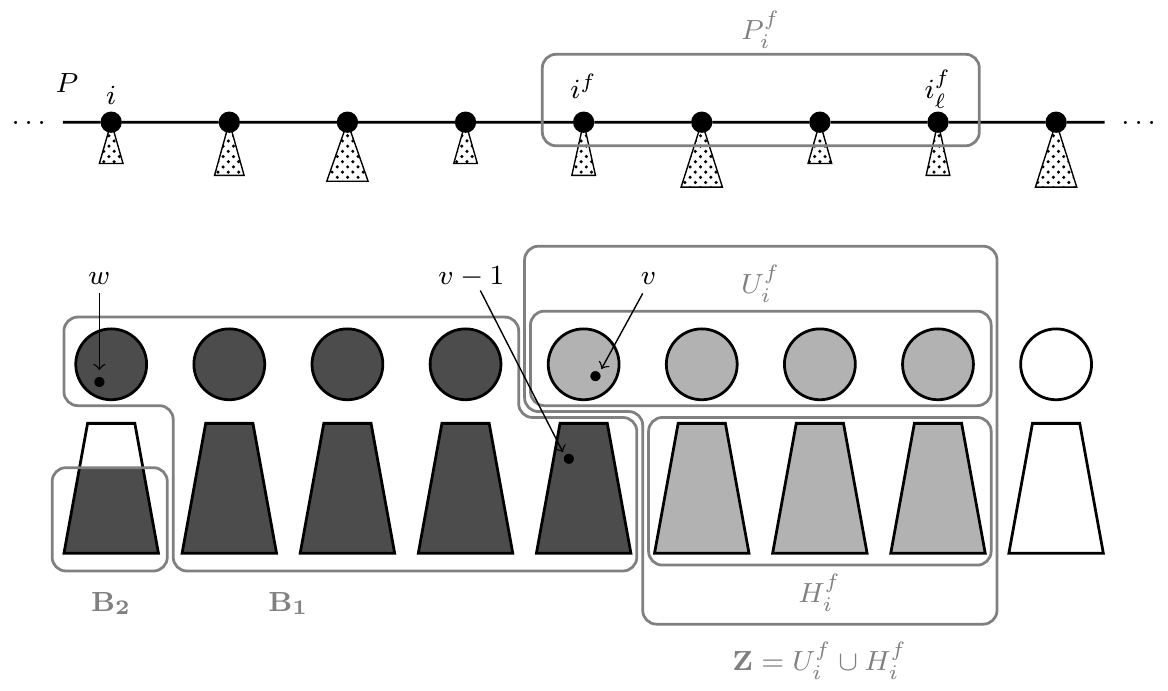}
  \end{center}
  \caption{Cut~\( (B,W,Z) \) with~\(B=B_1 \cup B_2\) in Case~2b.}
  \label{figCase2b}
\end{figure}

Let~\(w\) be the smallest vertex in~\(R_i\) and let~\(v\) be the smallest vertex in~\(R_{i^f}\). If~\(w = v\), i.e., if~\(i=i^f\), define~\({B_1 := \emptyset}\). Otherwise, define \( B_1 := \{u \in V : \text{\(u\) is between \(w\) and \(v-1\)}\}\), see also Figure~\ref{figCase2b}. Let~\(\tilde{m} := m- |B_1|\). As~\(N_m^{-1} (v)\) is in~\(S_i\), we have \(|S_i| \geq \tilde{m} \geq 1\). As in Case~2a, let~\(c := 1- \frac{r}{1-r}\) and if~\(c=0\), define~\(B_2 := \emptyset\) and~\(W_2:=S_i\). Otherwise we apply Lemma~\ref{lemmaApproxCut} to~\(G[S_i]\) with parameters~\( \tilde{m}\) and~\(c\), to obtain a cut~\( (B_2,W_2) \) with~\(B_2 \cupdot W_2 = S_i\) and~\(c \tilde{m} \leq |B_2| \leq \tilde{m}\). Defining \(B = B_1 \cup B_2\), one can argue that \( |B| \leq m \leq |B| + |Z|\) and that~\(B \cap Z = \emptyset\). Let~\(W:= V\setminus (B \cup Z)\). Similarly to Case~2a, Proposition~\ref{propCutPlabeling} implies that \(E_G(S_i, B_1, Z, W\setminus W_2) \subseteq E_G(i) \cup E_G(i^f) \cup E_G(i^f_{\ell})\), which together with the bound on~\(e_{G[S_i]} (B_2, W_2)\) from Lemma~\ref{lemmaApproxCut} gives the desired bound on the cut width and completes the proof of Theorem~\ref{thmDoubleR}.

\section{Algorithms for Graphs with a Given Tree Decomposition}
\label{secAlgoDetails}

In this section, we present two algorithms that, when given a tree decomposition of a graph, compute a bisection within the bound stated in Theorem~\ref{thmCutSpecSizes}. Section~\ref{subsecImplPrelim} discusses the preliminaries for both algorithms and gives an overview on subroutines which are described in Appendix~\ref{subsecAppSubroutines}. Section~\ref{subsecImplAlgo1} and Section~\ref{subsecImplAlgo2} describe both implementations, where the second one runs in linear time and is based on the first one, which is easier. The first implementation runs only for certain tree decompositions in linear time. All algorithms are described in a concise way and the reader is referred to~\cite{ThesisTina} for a more detailed description.

\subsection{Preliminaries for Both Implementations}
\label{subsecImplPrelim} 

Note that the algorithm in Theorem~\ref{thmCutSpecSizes} receives a tree decomposition as input. In general, it is NP-hard to compute a tree decomposition of minimum width~\cite{ArnborgCorneil}, but for fixed~\(k\in \mathbb{N}\) there is an algorithm that, when given a graph~\(G\) with~\({\tw(G) \leq k}\), computes a tree decomposition of width at most~\(k\) in linear time \cite{Bodlaender96}. For the implementation we always assume that the input graph~\(G\) satisfies~\(V(G)=[n]\) for some integer~\(n\), and that the clusters of the provided tree decomposition~\((T,\X)\) are given as unordered lists. Moreover, we assume that~\(T\) is given by its adjacency lists and that each node of~\(T\) has a link pointing to its cluster. In all implementations, we only use~\((T,\X)\) and not the graph~\(G\) itself. However, when explaining the ideas of the algorithm, we might still refer to the underlying graph~\(G\). Sets, and in particular the set~\(B\) of the desired bisection, are stored as unordered lists of vertices of~\(G\), unless indicated otherwise. Therefore the union of two disjoint sets is a simple concatenation of lists and takes constant time. Furthermore, our algorithms that compute a cut~\((B,W)\) only output a list of the vertices in~\(B\), as a list of the vertices in~\(W\) can then be computed in~\(\bigO(n)\) time using the assumption that the vertex set of~\(G\) is~\([n]\). Table~\ref{tableRunninTimes} gives an overview on the subroutines, which are used by both implementations. 

\begin{table}[b]
\begin{tabular}{l@{\hspace{1em}}c@{\hspace{1em}}c@{\hspace{1em}}l}
  \toprule
  Algorithm/Task & Running Time & Statement & Implementation \\
  \midrule
  Approximate cut~\( (B,W)\) & \( \bigO( \| (T,\X) \| ) \) & Lemma~\ref{lemmaApproxCut} & Appendix~\ref{subsubsecImplApproxCut} \\ 
  Restriction of~\( (T,\X)\) to~\(T'\) and~\(G'\) & \( \bigO(\| (T,\X) \|) \) & \multicolumn{2}{c}{mentioned in Section~\ref{subsecImplPrelim}} \\
  Make~\((T,\X)\) nonredundant & \( \bigO( \|(T,\X)\| ) \) & Proposition~\ref{propNonRedundantTD}b) &  Appendix~\ref{subsubsecImplNonRedundantTD}\\ 
  Heaviest path in~\( (T,\X)\) & \( \bigO( \| (T,\X) \| ) \) & Lemma~\ref{lemmaHeaviestPath} & Appendix~\ref{subsubsecImplHeaviestPath} \\
  \mbox{\(P\)-labeling} for a path~\( P \subseteq T\) & \( \bigO( \| (T,\X) \| ) \) & Lemma~\ref{lemmaPLabeling} & Appendix~\ref{subsubsecImplLabeling} \\
  \bottomrule
\end{tabular}
\caption{Overview on subroutines. Given a tree decomposition~\( (T,\X)\) of an arbitrary graph with vertex set~\([n]\) for some integer~\(n\), we describe subroutines for some tasks.}
\label{tableRunninTimes}
\end{table}

The algorithmic part of Theorem~\ref{thmCutSpecSizes} is more involved than the algorithmic part of Theorem~\ref{thmTreeCutSpecSizes} as we need to deal with the tree decomposition in an efficient way.  First of all, only a tree decomposition~\( (T_0, \X_0)\) of the input graph~\(G_0\) is provided and, to reapply Theorem~\ref{thmDoubleR} to a subgraph~\(G\) of~\(G_0\), a restriction~\((T,\X)\) of~\((T_0,\X_0)\) to some suitable tree~\(T\) and~\(G\) needs to be computed. The restriction~\( (T,\X)\) can be computed in~\( \bigO( \| (T_0, \X_0) \| ) \) time, provided that we can determine in~\(\bigO(1)\) time whether a vertex~\(x \in V(G_0)\) is in~\(G\).  Observe that~\(\|(T,\X)\| \leq \| (T_0, \X_0)\|\), but we have no control over the shrinkage of the size of the tree decomposition, even if we know that~\(G\) has at most half of the vertices of~\(G_0\). The next proposition about nonredundant tree decompositions will help us to control the size of the tree decomposition when reapplying Theorem~\ref{thmDoubleR} to construct a bisection as in Section~\ref{subsecProofGenThm}.

\begin{prop}
  \label{propNonRedundantTD}
  For every graph~\(G\) and every tree decomposition~\((T,\X)\) of~\(G\), the following holds.
  \begin{enumerate}
    \item[a)] If~\((T,\X)\) is nonredundant, then~\(|V(T)| \leq |V(G)|\).
    \item[b)] In~\(\bigO\left(\|(T, \X )\|\right)\) time, one can transform~\((T,\X)\) into a nonredundant tree decomposition~\((T',\X')\) such that \( \|(T',\X')\| \leq \| (T,\X)\| \), \(r(T', \X') \geq r(T,\X)\), and the width of~\( (T',\X')\) is at most the width of~\( (T,\X)\). 
  \end{enumerate}
\end{prop}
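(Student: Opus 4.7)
The plan is to handle the two parts in sequence.

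For part (a), I will build an injection $\phi: V(T) \to V(G)$. The case $|V(T)| = 1$ is trivial, so assume $|V(T)| \geq 2$ and root $T$ at an arbitrary leaf $\ell$. For every node $i$, let $q(i)$ denote its parent in the rooted tree if $i \neq \ell$, and the unique neighbor of $\ell$ if $i = \ell$. Then $\{i, q(i)\} \in E(T)$ in either case, so nonredundancy of $(T,\X)$ yields $X^i \not\subseteq X^{q(i)}$, and I define $\phi(i)$ to be any element of $X^i \setminus X^{q(i)}$. To see that $\phi$ is injective, suppose $\phi(i) = \phi(j) = v$ for some $i \neq j$. Condition (T3') forces the unique path in $T$ between $i$ and $j$ to lie entirely in $I_v$, and a short case analysis on the rooted-tree relationship between $i$ and $j$ (one is the root, one is a proper ancestor of the other, or they lie on different branches below a common ancestor) shows that this path must contain $q(i)$ or $q(j)$, contradicting $v \notin X^{q(i)} \cup X^{q(j)}$.

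For part (b), the proposed transformation is the following contraction procedure: as long as there is an edge $\{i,j\} \in E(T)$ with $X^i \subseteq X^j$, remove $i$ and reattach its other neighbors to $j$, keeping $X^j$ as the surviving cluster. Termination is obvious because $|V(T)|$ strictly decreases, and upon termination the result is nonredundant. The modified pair $(T',\X')$ is a tree decomposition of $G$ (one verifies (T1)--(T3) directly using $X^i \subseteq X^j$), its width is at most that of $(T,\X)$ since no cluster is enlarged, and $\|(T',\X')\| \leq \|(T,\X)\|$ since each contraction drops the size by $|X^i| + 1 \geq 1$. For the relative-weight bound, each path $P'$ in $T'$ lifts to a path $P$ in $T$ (by inserting $i$ adjacent to $j$ whenever $j \in V(P')$), and $w(P,\X) = w(P',\X')$ because $X^i \subseteq X^j$ implies that adding $i$ contributes no new vertex to the union; hence $r(T',\X') \geq r(T,\X)$.

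The remaining and main obstacle is to achieve the $\bigO(\|(T,\X)\|)$ running time. The plan is to maintain a global Boolean mark array indexed by $V(G)$, so that an inclusion test $X^i \subseteq X^j$ can be performed in $\bigO(|X^i| + |X^j|)$ time by marking $X^j$, scanning $X^i$, and unmarking. With a suitable traversal order of $T$---for instance a rooted DFS in which each node is compared only against its current parent, combined with constant-time pointer splicing to reattach children when a contraction occurs---and careful reuse of marks across adjacent checks, the total work amortizes to $\bigO(\|(T,\X)\|)$. The precise bookkeeping is routine but notation-heavy, and I would defer it to the appendix, following the detailed description in \cite{ThesisTina}.
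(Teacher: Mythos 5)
Part~(a) is correct; this is essentially the textbook injection argument (the paper simply cites Kleinberg--Tardos, Fact~10.16, without reproving it), and your device of rooting $T$ at a leaf $\ell$ so that $q(\ell)$ can be taken to be the unique neighbor of $\ell$ neatly avoids the usual special treatment of the root. For part~(b), your contraction step, width bound, and size bound are all correct and coincide with the paper's. Your marking scheme for the running time is in the right spirit; the paper's appendix accomplishes this instead with a DFS and a global discovery-time array for vertices of $G$, exploiting~(T3') so that when a node $i$ turns gray, the number of already-discovered vertices of $X^i$ equals $|X^i \cap X^{\pi[i]}|$, and both inclusion tests can be read off from this count together with $|X^i|$ and $|X^{\pi[i]}|$ in $\bigO(|X^i|)$ time, with no unmarking at all. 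Both you and the paper defer the bookkeeping for updating the data structures when $\pi[i]$ is contracted into $i$.

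However, your argument for $r(T',\X') \geq r(T,\X)$ runs in the wrong direction. You lift each path $P'$ in $T'$ to a path $P$ in $T$ with $w(P,\X) = w(P',\X')$; this only shows that $\max_{P} w(P,\X) \geq \max_{P'} w(P',\X')$, that is, $r(T,\X) \geq r(T',\X')$ --- the opposite of what is needed. (The lift is also not always well-defined: if $j$ lies in the interior of $P'$ and both of its $P'$-neighbors were originally neighbors of $i$ rather than of $j$, there is nowhere to insert $i$ while keeping a path in $T$.) The argument should instead push paths forward: contracting $\{i,j\}$ maps any path $P$ in $T$ to a path $P'$ in $T'$, and since $X^i \subseteq X^j$ the union of clusters along the path can only grow, so $w(P',\X') \geq w(P,\X)$; applying this to a heaviest path in $T$ yields the desired $r(T',\X') \geq r(T,\X)$. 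The fix is routine, but as written your proof establishes the wrong inequality.
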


A proof for Part~a) of the last proposition can be found in~\cite{KleinbergTardos} (see Fact~10.16). The proof of Part~b), except for the running time, which is discussed in Appendix~\ref{subsubsecImplNonRedundantTD}, is straightforward when contracting edges of~\(T\). Moreover, an algorithm following the construction in Section~\ref{secExistenceCut} needs to compute a heaviest path~\(P\) and a corresponding labeling of the vertices, as stated in the next lemmas, which are proved in Section~\ref{subsubsecImplHeaviestPath} and Section~\ref{subsubsecImplLabeling}.

\begin{lemma}\label{lemmaHeaviestPath}
  For every tree decomposition~\((T,\X)\), a heaviest path in~\(T\) with respect to~\(\X\) can be computed in~\(\bigO(\|(T,\X)\|)\) time.  
\end{lemma}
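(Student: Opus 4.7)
The plan is a rooted tree dynamic program analogous to the classical diameter\nobreakdash-in\nobreakdash-tree algorithm, adjusted to handle overlapping clusters. I would first root $T$ at an arbitrary node $r$ and write $p(j)$ for the parent of a non\nobreakdash-root node $j$. The key structural observation, which follows directly from (T3'), is that for any downward path $P=(j_0,j_1,\ldots,j_k)$ (with $j_{h+1}$ a child of $j_h$) and any index $h\geq 1$, a vertex $v\in X^{j_h}\setminus X^{j_{h-1}}$ cannot appear in any later bag $X^{j_{h'}}$: otherwise $j_{h-1}$ would lie on the $j_h,j_{h'}$\nobreakdash-path in $T$, and (T3) would force $v\in X^{j_{h-1}}$. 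Setting $\hat X^{j}:=X^{j}\setminus X^{p(j)}$ for non\nobreakdash-root nodes, the union $\bigcup_{h=0}^{k}X^{j_h}$ is therefore the disjoint union of $X^{j_0}$ and the sets $\hat X^{j_h}$ for $h\geq 1$, which yields the additive expression $w(P,\X)=|X^{j_0}|+\sum_{h=1}^{k}|\hat X^{j_h}|$.

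With this additivity in hand, for each node $j$ I define $D(j)$ to be the maximum of $w(P,\X)-|X^{p(j)}\cap X^{j}|$ over downward paths $P$ starting at $j$ (where the subtracted term is taken to be $0$ if $j=r$). A direct calculation then gives the post\nobreakdash-order recurrence
\[D(j)\;=\;|\hat X^{j}|\;+\;\max\bigl(\{0\}\cup\{D(j'):j'\text{ child of }j\}\bigr),\]
with $\hat X^{r}:=X^{r}$. Any path $P\subseteq T$ has a unique top node $i$ (the vertex of $V(P)$ closest to $r$) and descends from $i$ into at most two of its children's subtrees. Another application of (T3'), analogous to the disjointness argument above, shows that the weight of such a path equals $|X^{i}|+D(j_1)+D(j_2)$, where $j_1,j_2$ are the children of $i$ through which the path descends (with a summand set to $0$ for an absent side). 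Hence the heaviest path with top $i$ has weight $|X^{i}|$ plus the sum of the two largest values of $\max(0,D(j))$ over children $j$ of $i$, and the heaviest path in $T$ is obtained by maximizing this quantity over all $i\in V(T)$. Storing the arg\nobreakdash-max children at each node lets the path itself be reconstructed afterwards in $\bigO(|V(T)|)$ time.

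The main obstacle is implementing the DP within the claimed $\bigO(\|(T,\X)\|)$ bound, and the only nontrivial step is computing all the values $|\hat X^{j}|$. Using the assumption from Section~\ref{subsecImplPrelim} that $V(G)=[n]$, I would allocate once a bit array $M[1..n]$ initialized to $0$. Traversing $T$ in any order, for each internal node $p$ I would set $M[v]\leftarrow 1$ for $v\in X^{p}$, iterate over each child $j$ of $p$ to count the $v\in X^{j}$ with $M[v]=0$ (which equals $|\hat X^{j}|$), and then restore $M$ by clearing the entries for $X^{p}$. Each cluster $X^{j}$ is thus scanned a constant number of times---once while its own node serves as parent, once as a child of its parent---giving total work $\bigO\bigl(\sum_{i\in V(T)}|X^{i}|\bigr)=\bigO(\|(T,\X)\|)$ for this preprocessing. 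The post\nobreakdash-order DP for $D$ and the two\nobreakdash-largest computations at each node together spend $\bigO(|V(T)|)$ additional time, so the overall running time is $\bigO(\|(T,\X)\|)$, as required.
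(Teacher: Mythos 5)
Your proof is correct, and it takes a genuinely different algorithmic route from the paper's. Both proofs hinge on essentially the same structural observation, namely that by property~(T3') the weight of a path telescopes into disjoint increments of the form $|X^j \setminus X^{p(j)}|$ (your $|\hat X^j|$; the paper's recurrence $w(r,i) = w(r,\pi[i]) + |X^i| - |X^i \cap X^{\pi[i]}|$ is the same fact expressed cumulatively). However, the algorithms built on top of this are different. The paper adapts Handler's double-traversal method: run one DFS from an arbitrary node $r$ to find the node $s$ maximizing $w(r,s)$, then run a second DFS from $s$ to find $t$ maximizing $w(s,t)$, and return the $s,t$-path. You instead run a single post-order dynamic program, computing for each node the best downward contribution $D(j)$ and combining, at each candidate top node $i$, the quantity $|X^i|$ with the two largest $D$-values among its children. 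Both approaches are classical for longest paths in trees and both run in $\bigO(\|(T,\X)\|)$ time; the only real trade-off is that your DP's correctness is fully self-contained once the disjoint decomposition of $w(P,\X)$ is established, whereas the double-traversal method additionally relies on the (true but nontrivial) fact that the farthest node from an arbitrary start is always an endpoint of some heaviest path, which requires a separate verification that it survives the transition from an edge-weighted metric to this cluster-weighted setting; the paper does not spell that argument out. Your marking-array technique for computing all $|\hat X^j|$ in $\bigO(\sum_i |X^i|)$ time is also a valid alternative to the paper's discovery-time array $d_G$, with the same complexity.
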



\begin{lemma}\label{lemmaPLabeling}
  Given a tree decomposition~\((T,\X)\) of a graph~\(G=(V,E)\) and a path~\(P \subseteq T\), one can compute a \mbox{\(P\)-labeling} of~\(G\) with respect to~\((T,\X)\) in~\(\bigO(\|(T,\X)\|)\) time. While doing so, we can compute the following parameters (using the same notation as in Section~\ref{subsecVertexLabeling}):
  \begin{itemize}[leftmargin= \IdentationTDConditions]
    \item two integer arrays~\(A_L\) and~\(A_V\), each of length~\(n\), so that for~\(x \in V\) the entry~\(A_L[x]\) is the label of vertex~\(x\) and for~\(k \in [n]\) the entry~\(A_V[k]\) is the vertex that received label~\(k\), 
    \item a binary array~\(A_R\) of length~\(n\), so that for~\(x \in V\) the entry~\(A_R[x]\) is one if and only if~\(x \in R\), 
    \item an integer array~\(A_P\) of length~\(n\), so that for~\(x\in V\) the entry~\(A_P[x]\) is the path node of~\(x\), and
    \item a list~\(L_P\) of the nodes on the path~\(P\) in the order in which they occur when traversing~\(P\), including, for each~\(i \in V(P)\), a pointer to the root of~\(T_i\) stored as an arborescence with root~\(i\).
  \end{itemize}
\end{lemma}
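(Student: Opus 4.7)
The plan is to perform a constant number of linear-time passes over $(T,\X)$. First I would walk along the input path $P$ from $i_0$ to $j_0$ to produce the list $L_P = (i_0,\ldots,j_0)$, simultaneously filling a binary array $M$ of length $|V(T)|$ that marks the nodes lying on $P$; this costs $\bigO(|V_P|)$. Then I would initialize $A_R$ to zero and, for each $i$ occurring in $L_P$, scan $X^i$ via its cluster pointer and set $A_R[x] \leftarrow 1$ for every $x \in X^i$; the total cost so far is $\bigO(\sum_{i \in V_P}|X^i|) = \bigO(\|(T,\X)\|)$.

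Next, root $T$ at $i_0$ and run a single DFS to uncover the component structure of $T - E_P$ and, for each $i \in V_P$, to build the arborescence $T_i$ rooted at $i$ that will be pointed to from $L_P$. During the DFS I maintain a variable $c$ holding the current path node: when entering a node $u$ with $M[u] = 1$, I set $c \leftarrow u$ and register $u$ as the root of a new, initially trivial arborescence $T_u$; when entering a node $u$ with $M[u] = 0$, I append $u$ as a child of its DFS parent inside the arborescence $T_c$ being grown and store $\mathrm{comp}[u] \leftarrow c$. Since the components of $T - E_P$ are precisely the arborescences produced this way and each contains exactly one path node, a single DFS yields both the child lists defining the $T_i$ and the array $\mathrm{comp}$ that sends each $j \in V(T)$ to the unique $i \in V_P$ with $j \in V(T_i)$, at cost $\bigO(|V(T)|)$.

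With this data in hand I fill $A_P$ in two sweeps. For vertices in $R$, I traverse $L_P$ in the order $i_0,\ldots,j_0$ and, for every $i$ visited and every $x \in X^i$, set $A_P[x] \leftarrow i$ unless $A_P[x]$ is already assigned; since the path node of $x \in R$ is by definition the node on $P$ closest to $i_0$ whose cluster contains $x$, this is correct. For vertices in $S$, I iterate over all non-path nodes $j$ and over $X^j$, and for every $x \in X^j$ with $A_R[x] = 0$ set $A_P[x] \leftarrow \mathrm{comp}[j]$; property (T3') forces all clusters containing such an $x$ to lie in one component $T_i$, so the assignment is consistent irrespective of the scan order. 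Both sweeps together cost $\bigO(\sum_i|X^i|) = \bigO(\|(T,\X)\|)$.

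To finish, I allocate two empty linked lists $L_{R,i}$ and $L_{S,i}$ at each $i \in V_P$, whose headers sit on $L_P$; I sweep $x = 1,\ldots,n$ and append $x$ to $L_{R,A_P[x]}$ or $L_{S,A_P[x]}$ according to $A_R[x]$; then I walk $L_P$ in order, maintaining a counter $k$ starting at $1$, and for each $i$ enumerate $L_{S,i}$ first and then $L_{R,i}$, setting $A_L[x] \leftarrow k$, $A_V[k] \leftarrow x$ and incrementing $k$. By construction the labels used in $R_i \cup S_i$ form a consecutive block with the $R_i$-labels largest in the block, and these blocks appear in the order of $P$, so the output is indeed a $P$-labeling. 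The principal obstacle is confining everything to linear time: the whole computation hinges on the single DFS that simultaneously builds the arborescences and $\mathrm{comp}$ with $\bigO(\deg_T(u))$ work per node $u$, after which all remaining steps do $\bigO(1)$ work per vertex of $G$ or per element of a cluster, yielding a total of $\bigO(n + |V(T)| + \sum_i|X^i|) = \bigO(\|(T,\X)\|)$.
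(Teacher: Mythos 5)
Your overall plan mirrors the paper's: a handful of linear passes over $(T,\X)$, a DFS-based decomposition of $T-E_P$ into the arborescences $T_i$, and exploitation of~(T3') to assign path nodes. The paper's implementation is slightly more economical, as it assigns labels on the fly during a post-order DFS of each $T_i$ while walking $L_P$ in order (using $A_R$ to defer the $R$-vertices of a cluster $X^i$ to the moment node $i$ itself turns black), whereas you first compute a $\mathrm{comp}$ array, then bucket vertices into per-node lists $L_{R,i}$ and $L_{S,i}$, and finally number them. Both achieve the $\bigO(\|(T,\X)\|)$ bound, so this is a difference of bookkeeping rather than of method.

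There is, however, a genuine gap in the step that builds $\mathrm{comp}$. You keep a single mutable variable $c$ that is overwritten whenever the DFS enters a path node and is never restored on backtrack. If the DFS at a path node $i_k$ happens to visit the path-child $i_{k+1}$ before some non-path child $u$, then exploring the entire subtree under $i_{k+1}$ (which contains $i_{k+2},\dots$) advances $c$, so when the DFS returns to $i_k$ and finally enters $u$, you record $\mathrm{comp}[u]=c$ with $c\neq i_k$. Thus the claim that ``the components of $T-E_P$ are precisely the arborescences produced this way'' fails as written, and the values $A_P[x]$ derived for $x\in S$ would be incorrect. The fix is routine: for $u\notin V_P$ set $\mathrm{comp}[u]\leftarrow \pi(u)$ when $M[\pi(u)]=1$ and $\mathrm{comp}[u]\leftarrow\mathrm{comp}[\pi(u)]$ otherwise (correct because the edge $\{u,\pi(u)\}$ is never in $E_P$ when $u\notin V_P$), or push and pop $c$ on a stack as you enter and finish path nodes, or always defer the path-child of $i_k$ to last. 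As it stands the argument asserts the invariant without justification, and it is false.
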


Note that the arrays~\(A_L\) and~\(A_V\) allow us to convert vertex names of the input graph to labels and vice versa in constant time. Therefore, in the following, we do not distinguish between vertex names and labels, except when we modify the \mbox{\(P\)-labeling} in Section~\ref{subsecImplAlgo2}.

\subsection{The First Implementation}
\label{subsecImplAlgo1}

Consider a tree decomposition~\( (T,\X)\) of width~\(t-1\) of an arbitrary graph~\(G\) on~\(n\) vertices. The aim of this subsection is to describe an algorithm that computes a bisection in~\(G\) of width within the bound stated in Theorem~\ref{thmCutSpecSizes} in~\( \bigO( \| (T,\X) \| + nt) \) time. First note that due to Proposition~\ref{propNonRedundantTD} we may assume that~\( (T,\X)\) is nonredundant and then the running time simplifies to~\(\bigO(nt)\). Assume that we can compute a cut~\( (B,W,Z)\) with the properties of Theorem~\ref{thmDoubleR} in~\( \bigO( \| (T,\X) \|)\) time when given~\( (T,\X)\), a path~\(P \subseteq T\), and an integer~\(m \in [n]\). Then it is easy to compute in~\(\bigO(nt)\) time a bisection in~\(G\) with the properties in Theorem~\ref{thmCutSpecSizes} by following the construction from Section~\ref{subsecProofGenThm}. Indeed, we can compute a heaviest path~\(P\) in~\( (T,\X)\), apply Theorem~\ref{thmDoubleR} with the path~\(P\) and~\(m = \Bfloor{n/2}\) to obtain a cut~\((B,W,Z)\) in~\(G\). If~\(|B| < m\), i.e., Option~\ref{thmDoubleROpt2} in Theorem~\ref{thmDoubleR} occurred, we compute the restriction of~\( (T,\X)\) to~\(T\) and~\(G[Z]\) to obtain a tree decomposition~\( (T',\X')\) of~\(G[Z]\), which we modify to be nonredundant. Then the same procedure can be applied to~\((T',\X')\) with parameter~\(m' = m-|B|\). This is repeated until all sets~\(B\) together contain exactly~\(m\) vertices. In one iteration, each of these steps takes~\(\bigO(\|(T,\X)\|)\) time, which is~\(\bigO(nt)\) time by Proposition~\ref{propNonRedundantTD}a). Note that Theorem~\ref{thmDoubleR} requires a nonredundant path~\(P\) and that this is satisfied because every path in a nonredundant tree decomposition is nonredundant. As Theorem~\ref{thmDoubleR} guarantees that the number of vertices shrinks by a factor of~\(1/2\) from~\(G\) to~\(G[Z]\), this implies that a bisection of width within the bound of Theorem~\ref{thmCutSpecSizes} can be computed in~\(\bigO(nt)\) time when a nonredundant tree decomposition of~\(G\) is provided as input. 

So for the first implementation, it only remains to show how to implement the algorithm corresponding to Theorem~\ref{thmDoubleR}. Let~\(G=(V,E)\) be a graph on~\(n\) vertices, let~\((T,\X)\) be a nonredundant tree decomposition of~\(G\) of width~\(t-1\), and let~\(P \subseteq T\) be a path in~\((T,\X)\). Fix an~\(m \in [n]\) and define~\(V_T := V(T)\), \(V_P := V(P)\), \(\X := (X^i)_{i \in V_T}\), and~\(r:= w^*(P,\X)\). Recall the construction from Sections~\mbox{\ref{subsecVertexLabeling}-\ref{subsecCase2}} and in particular the labeling as well as the sets~\(R:= \bigcup_{i \in V_P} X^i\) and~\(S := V\setminus R\).  First, the algorithm computes a \mbox{\(P\)-labeling} of the vertices of~\(G\) as well as the arrays~\(A_L\), \(A_V\), \(A_R\), and~\(A_P\) from Lemma~\ref{lemmaPLabeling}, which takes~\( \bigO( \|(T,\X)\|)\) time by Lemma~\ref{lemmaPLabeling}.  Using~\(A_R\), the algorithm can decide in~\(\bigO(n)\) time whether Case~1 of the proof of Theorem~\ref{thmDoubleR} applies, i.e., whether there is a vertex~\(x\in R\) with~\(N_m(x) \in R\). If Case~1 applies, then~\(Z:=\emptyset\) and a list of the vertices in the set~\(B\) can be obtained from~\(A_V\) in~\(\bigO(n)\) time.

So from now on assume that Case~1 does not apply. Recall the definitions of the sets~\(S_i\), \(U_i^b\), \(U_i^f\), \(H_i^b\), and~\(H_i^f\) for~\(i\in V_P\), as well as the notion of b-special and f-special nodes from Section~\ref{subsubsecCase2FurtherNotation}. First, the sets~\(S_i\), \(U_i^b\), \(U_i^f\), \(H_i^b\), and~\(H_i^f\) are indexed by nodes of~\(P\) and, as~\(|V_P| \leq |V_T| \leq n\) by Proposition~\ref{propNonRedundantTD}a), there are at most~\(n\) of each such sets.  Since, for a fixed~\(i \in V_P\), \(x \in S_i\) if and only if the path node of~\(x\) is~\(i\) and~\(x \not\in R\), the sizes of all sets~\(S_i\) can be computed in~\(\bigO(n)\) time by using the arrays~\(A_R\) and~\(A_P\). Furthermore, consider an arbitrary~\(x \in V\) and some node~\(i \in V_P\). Then,~\(x \in U_i^b\) if and only if~\(N_m(x) \in S_i\) and~\(x \in R\). Hence, the sizes of all sets~\(U_i^b\) can be computed in~\(\bigO(n)\) time. While doing so, for each~\(i \in V_P\), the algorithm keeps track of the smallest vertex~\(x \in S_i\) with~\(N_m^{-1}(x) \in R\) and the largest vertex~\(y \in S_i\) with~\(N_m^{-1}(y) \in R\), if there are any. If such~\(x\) and~\(y\) exist, then~\(H_i^b\) is the set of vertices between~\(N_m^{-1}(x)\) and~\(N_m^{-1}(y)\), which are not in~\(U_i^b\), and if such~\(x\) and~\(y\) do not exist then~\(H_i^b = \emptyset\) as~\(i\) is not \mbox{b-special} in this case. Similarly, the sizes of the sets~\(U_i^f\) and~\(H_i^f\) for~\(i\in V_P\) can all together be computed in~\(\bigO(n)\) time. Now, the algorithm checks for each node~\(i \in V_P\) whether it satisfies one of the properties stated in Proposition~\ref{propSpecialNodeForCase2ab}. To do so, it uses that a node~\(i \in V_P\) is \mbox{b-special} if and only if~\(U_i^b \neq \emptyset\) and similarly for \mbox{f-special}. Hence, such a node~\(i \in V_P\) can be found in~\(\bigO(n)\) time. 

Assume the algorithm determined a node~\(i \in V_P\) with the property of Proposition~\ref{propSpecialNodeForCase2ab}a). So the algorithm follows the construction described in Section~\ref{subsubsecCase2a}. The case when~\(i\) satisfies Proposition~\ref{propSpecialNodeForCase2ab}b) is analogous. The set~\(B_1\) can be read off the labeling in~\(\bigO(n)\) time. To obtain the set~\(B_2\), an approximate cut with~\(\tilde{m} := m - |B_1|\) and~\(c := 1- \frac{r}{1-r}\) is constructed in the subgraph of~\(G[S_i]\). In order to do so, a tree decomposition of~\(G[S_i]\), where the vertices of~\(G[S_i]\) are renamed to~\(1, \ldots, |S_i|\), is needed. The algorithm uses the restriction~\((\tilde{T}, \tilde{\X})\) of~\( (T,\X)\) to~\(T\) and~\(G[S_i]\), which can be obtained in~\(\bigO( \| (T,\X) \|)\) time. While computing~\((\tilde{T}, \tilde{\X})\), each vertex name is converted to its label and~\(A_L[u]-1\) is subtracted, where~\(u\) is the vertex with the smallest label among all vertices in~\(S_i\). Then, the algorithm in Lemma~\ref{lemmaApproxCut} takes~\(\bigO( \| (\tilde{T}, \tilde{\X}) \| ) = \bigO( \| (T,\X)\| ) \) time to compute~\(B_2\). Afterwards, the computation of~\(B:=B_1 \cupdot B_2\) takes~\(\bigO(n)\) time as the vertex names in~\(B_2\) need to be converted back.

Next, the set~\(Z:= U_i^b \cup H_i^b\) is computed. To do so, note that~\(Z\) is the set of vertices between~\(N_m^{-1}(x)\) and~\(N_m^{-1}(y)\), where~\(x\) and~\(y\) are the smallest and the largest vertices in~\(S_i\) with~\(N_m^{-1}(x) \in R\) and~\(N_m^{-1}(y) \in R\), respectively, and that~\(x\) and~\(y\) have been computed earlier. Hence, a list of the vertices in~\(Z\) can be read off the labeling. Similar to the preparation for the application of Lemma~\ref{lemmaApproxCut} to~\(G[S_i]\), the algorithm sets up a bijection, or it adjusts an existing bijection, so that the vertices of~\(G[Z]\) are~\(1, \ldots, |Z|\) after applying the bijection. This takes~\(\bigO(n)\) time. When discussing the implementation of the subroutines, e.g., computing an approximate cut in Appendix~\ref{subsubsecImplApproxCut} and computing a heaviest path in Appendix~\ref{subsubsecImplHeaviestPath}, it will become clear that these subroutines require the underlying graph~\(\tilde{G}\) to satisfy~\(V(\tilde{G}) = [\tilde{n}]\) for some integer~\(\tilde{n}\). Using the same arrays in all recursive calls with a single initialization in the beginning, one can avoid the relabeling while keeping the same running time.

All in all, each step of the procedure takes~\(\bigO(n)\) time, except the computation of the \mbox{\(P\)-labeling} as well as the application of Lemma~\ref{lemmaApproxCut}, that each takes~\(\bigO( \| (T,\X) \| )\) time. Using that~\((T,\X)\) is nonredundant, the desired running time follows.

\subsection{The Second Implementation}
\label{subsecImplAlgo2}

Consider a tree decomposition~\((T,\X)\) of width~\(t-1\) of a graph~\(G\) on~\(n\) vertices. Note that the running time~\(\bigO(\|(T,\X)\| + nt)\) of the first implementation in Section~\ref{subsecImplAlgo1} is not necessarily linear in the input size, e.g.,~if~\(t\) is not constant and~\( (T,\X)\) has only few clusters of size~\(\theta(t)\). Here we describe an algorithm that computes a bisection of width within the bound of Theorem~\ref{thmCutSpecSizes} in~\(\bigO(\|(T,\X)\|)\) time. The first implementation might not run in~\(\bigO(\|(T,\X)\|)\)  time for the following reason: Computing a heaviest path and making a tree decomposition nonredundant both require to go through the entire current tree decomposition, whose size might not shrink in the same ratio as the number of vertices of the current graph does, and therefore each take~\(\Theta(\|(T,\X)\|)\) time. So, in order to compute a bisection in~\(\bigO(\|(T,\X)\|)\) time, we cannot do this before every application of Theorem~\ref{thmDoubleR}. Therefore, in the second implementation, we make the tree decomposition nonredundant and compute a heaviest path~\(P\) only once in the beginning. For the second and later applications of Theorem~\ref{thmDoubleR}, we reuse a piece of~\(P\) and adjust the arrays of Lemma~\ref{lemmaPLabeling} so that a tree decomposition is only needed for the application of the approximate cut from Lemma~\ref{lemmaApproxCut}. We apply Lemma~\ref{lemmaApproxCut} in such a way that we only need a small part of~\(T\), and this part of the tree decomposition can be obtained by computing a restriction of the initial input tree decomposition. All in all, we do not need to carry along a tree decomposition of the current graph and, in all applications of Lemma~\ref{lemmaApproxCut} together, we only traverse once the initial tree decomposition.  This is made precise with the next lemma, which is an algorithmic version of Theorem~\ref{thmDoubleR}. For the statement of the lemma and for the remaining subsection, consider an arbitrary graph~\(G_0\) on~\(n_0\) vertices with~\(V(G_0) = [n_0]\) and a tree decomposition~\((T_0,\X_0)\) of~\(G_0\) of width at most~\(t-1\) and with~\(\X_0 = (X_0^i)_{i\in V(T_0)}\). Due to Proposition~\ref{propNonRedundantTD}b), we may assume without loss of generality that~\((T_0,\X_0)\) is nonredundant. We refer to the arrays and the list in Lemma~\ref{lemmaPLabeling} as the \emph{set of \mbox{\(P\)-parameters}}. As the set of \mbox{\(P\)-parameters} will be modified in each iteration, it is convenient that the arrays~\(A_L\), \(A_R\), and~\(A_P\) have length~\(n_0\) and refer to the original name of each vertex in~\(G_0\), even if we consider a graph~\(G \subseteq G_0\) on~\(n < n_0\) vertices. Only the array~\(A_V\) will be indexed by~\([n]\). In particular, compared to the description of the first implementation, we do not set up a bijection that renames the vertices of~\(G\) to~\(1, \ldots, n\). Entries of~\(A_L\), \(A_R\), and~\(A_P\) that refer to vertices not in~\(G\) can be anything. A list of the vertices of~\(G\) can be read off the array~\(A_V\).

\begin{lemma}
  \label{lemmaDoubleRAlgo}
  Let~\(G \subseteq G_0\) be a graph on~\(n\) vertices,~\(m \in [n]\), and let~\(T\) be a tree with~\(V(T)\subseteq V(T_0)\) such that the restriction~\((T,\X)\) of~\((T_0,\X_0)\) to~\(T\) and~\(G\) is a tree decomposition of~\(G\), and let~\(P\subseteq T\) be a nonredundant path. There is an algorithm that receives~\(m\), the set of \mbox{\(P\)-parameters} for~\(G\) with respect to~\((T,\X)\), and~\( (T_0, \X_0)\) as input and computes a list of the vertices in~\(B\) of a cut~\((B,W,Z)\) in~\(G\) that satisfies one of the following properties:
  \begin{enumerate}[1)]
    \item\label{lemmaDoubleRAlgoOption1} \( |B| = m\), \(Z=\emptyset\), \(e_G (B,W) \leq 2 t \Delta(G) \), and the algorithm takes~\(\bigO(n)\) time, or
    \item\label{lemmaDoubleRAlgoOption2} \( |B| \leq m \leq |B| + |Z|\) with \( 0 < |Z| \leq  n/2 \), \(e_G(B,W,Z) \leq  t \Delta(G)  \log ( 16/ w^*(P,\X) )\), and there is a tree~\(T'\) with~\(V(T')\subseteq V(T)\) such that the restriction~\((T',\X')\) of~\((T_0, \X_0)\) to~\(T'\) and~\(G[Z]\) is a tree decomposition of~\(G[Z]\), and~\(T'\) contains a nonredundant path~\(P'\) that satisfies~\(w^*(P',\X') \geq 2 w^*(P,\X)\). In this case, the algorithm modifies the set of \mbox{\(P\)-parameters} to be the set of \mbox{\(P'\)-parameters} for~\(G[Z]\) and takes~\(\bigO\left( n + \sum_{i \in V(T)\setminus V(T')} |X_0^i| \right)\) time. 
  \end{enumerate}
\end{lemma}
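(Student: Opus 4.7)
The plan is to translate the existence proof of Theorem~\ref{thmDoubleR} into an algorithm that operates on the arrays produced by Lemma~\ref{lemmaPLabeling} rather than on the graph itself, and to charge whatever work cannot be done in~\(\bigO(n)\) to the nodes of~\(T\) that will be dropped when moving from~\((T,\X)\) to~\((T',\X')\). Throughout, the key data structures are~\(A_L\), \(A_V\), \(A_R\), \(A_P\) and~\(L_P\), together with the original decomposition~\((T_0,\X_0)\) needed only for one call to Lemma~\ref{lemmaApproxCut}. Since~\(V(G)\) is encoded inside~\(A_V\), a scan of~\(A_V\) of length~\(n\) together with lookups into~\(A_R\), \(A_L\) and~\(A_P\) lets us decide in~\(\bigO(n)\) whether Case~1 of the proof of Theorem~\ref{thmDoubleR} applies, i.e.,~whether some~\(x \in R\) satisfies~\(N_m(x) \in R\). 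If so, we read off \(B = \{A_V[A_L[x]+1], \dots, A_V[A_L[x]+m]\}\), report Option~\ref{lemmaDoubleRAlgoOption1}, and stop; the bound~\(e_G(B,W) \leq 2t\Delta(G)\) is immediate from Proposition~\ref{propCutPlabeling}.

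When Case~1 does not apply, a single pass over~\(A_V\) suffices to compute, for every node~\(i \in V_P\), the values~\(|S_i|\), \(|U_i^b|\), \(|U_i^f|\), \(|H_i^b|\) and~\(|H_i^f|\): using~\(A_P\) and~\(A_R\) we can place each vertex~\(x\) into the unique~\(S_i\) or~\(R_i\) it belongs to, and using~\(A_L\) we can look up the path node of~\(N_m(x)\) and~\(N_m^{-1}(x)\) in constant time. Bucketed counters indexed by~\(L_P\) then tally the cardinalities. For each \mbox{b-special}~\(i\) we also record the smallest and largest vertex of~\(S_i\) whose preimage under~\(N_m\) lies in~\(R\); from these endpoints, \(|H_i^b|\) is the number of vertices between~\(N_m^{-1}(x)\) and~\(N_m^{-1}(y)\) that do not lie in~\(U_i^b\), which can be read off directly. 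The analogous statement holds for~\(U_i^f\) and~\(H_i^f\). Scanning the resulting arrays once identifies a node~\(i\) as in Proposition~\ref{propSpecialNodeForCase2ab}a) or~b); assume the former, the latter being symmetric. The set~\(B_1\) from Section~\ref{subsubsecCase2a} is a range of consecutive labels and is output directly from~\(A_V\).

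To produce~\(B_2\) we must invoke Lemma~\ref{lemmaApproxCut} on~\(G[S_i]\). The tree~\(\tilde{T}\) that we hand to that subroutine is the minimal subtree of~\(T\) spanning the nodes whose~\((T_0,\X_0)\)-clusters intersect~\(S_i\); its clusters are obtained by intersecting each~\(X_0^j\) with~\(S_i\), which can be done in time~\(\bigO\bigl(\sum_j |X_0^j|\bigr)\) where~\(j\) ranges over the pertinent nodes. These nodes are precisely those of~\(V(T)\setminus V(T')\) that will be discarded, so this cost is exactly what the running time in Option~\ref{lemmaDoubleRAlgoOption2} allows. The approximate-cut subroutine then runs in the same~\(\bigO\) bound by Lemma~\ref{lemmaApproxCut}. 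Finally we set~\(B := B_1 \cupdot B_2\) and~\(Z := U_i^b \cupdot H_i^b\); by the proof of Theorem~\ref{thmDoubleR} these satisfy all the size and cut-width requirements of Option~\ref{lemmaDoubleRAlgoOption2}.

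What remains, and what I expect to be the main obstacle, is to update the~\(P\)-parameters to a set of~\(P'\)-parameters for~\(G[Z]\) in~\(\bigO\bigl(n + \sum_{i \in V(T)\setminus V(T')} |X_0^i|\bigr)\) time. The path~\(P'\) is the sub-path of~\(P\) whose node set is~\(P_i^b\); traversing~\(L_P\) from~\(i^b\) to~\(i^b_\ell\) we extract it, and while doing so we also rebuild the pointers from each~\(j \in V(P')\) into the appropriate subtree of~\(T\), using the restriction of~\((T_0,\X_0)\) just described. To obtain the arrays we walk through the current~\(A_V\) once: for vertices not in~\(Z\) we overwrite the array~\(A_V\) by compacting, and for vertices in~\(Z\) the entries of~\(A_L\), \(A_R\) and~\(A_P\) remain valid with one caveat; the roles of~\(R\) and~\(S\) shift because the new path~\(P'\) realises only the clusters corresponding to~\(P_i^b\). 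The subtle point is that the ordering prescribed by a~\mbox{\(P'\)-labeling} coincides with the restriction of the current ordering to~\(Z\): this follows from the fact that~\(U_i^b\) and~\(H_i^b\) occupy an interval of consecutive labels under the~\(P\)-labeling, and for~\(j \in P_i^b\) the vertices in~\(R_j \cup S_j\) of the new labeling are a subset of those in the old~\(R_j \cup S_j\). Consequently, after relabelling the surviving entries by their position in the compacted~\(A_V\) and updating~\(A_R\) and~\(A_P\) accordingly, the set of \(P'\)-parameters is correct. All of this work is linear in~\(|Z| \leq n\) plus the cost of rebuilding the restriction, as required.
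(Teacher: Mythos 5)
Your overall plan — translate Theorem~\ref{thmDoubleR} into operations on the \mbox{\(P\)-parameter} arrays, and charge the non-\(\bigO(n)\) work to the clusters of the nodes that get discarded — is the paper's approach. However, there is a genuine gap in how you construct~\(T'\) and~\(P'\).

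You write that ``\(P'\) is the sub-path of~\(P\) whose node set is~\(P_i^b\); traversing~\(L_P\) from~\(i^b\) to~\(i^b_\ell\) we extract it.'' This assumes that~\(P_i^b\) is a contiguous sub-path of~\(P\), which in general fails. The set~\(P_i^b\) consists of the nodes~\(j\in V_P\) with~\(R_j \subseteq N_m^{-1}(S_i)\), and since~\(N_m^{-1}(S_i)\) is an interval of labels \emph{modulo}~\(n\), the resulting node set can wrap around the ends of~\(P\). Concretely, \(P_i^b\) may split into two pieces~\(\{i_0,\ldots,i_\ell^b\}\) and~\(\{i^b,\ldots,j_0\}\), i.e.,~\(P_i^b\) induces a path in~\(T^+\) (the tree with the extra edge~\(\{i_0,j_0\}\)) but not in~\(T\). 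In that case ``traversing~\(L_P\) from~\(i^b\) to~\(i^b_\ell\)'' walks precisely through the \emph{complement} of~\(P_i^b\), and your construction produces the wrong set. This is also why the lemma is careful to say only~\(V(T')\subseteq V(T)\) rather than~\(T'\subseteq T\): the intended~\(T'\) is obtained from the induced forest~\(H_T\) by, when needed, adding the new edge~\(\{i_\ell^b, i^b\}\), and then~\(P'=T'[P_i^b]\) is no longer a sub-path of~\(P\) and has a different start node~\(i_0'\). You then need to verify that the restriction of~\((T_0,\X_0)\) to this patched~\(T'\) and~\(G[Z]\) is still a tree decomposition — a point that requires an argument (one must check (T2) and (T3) survive the added edge) and is not automatic.

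A secondary, related omission: you assert that the \(P'\)-labeling of~\(G[Z]\) ``coincides with the restriction of the current ordering to~\(Z\)'' essentially because~\(Z\) occupies consecutive labels. The interval of labels occupied by~\(Z\) can itself wrap across the~\(n\)-to-\(1\) boundary, and in the disconnected case the new reference end~\(i_0'\) differs from~\(i_0\), so the argument that the path nodes and the ordering are preserved needs to be made explicitly (it works because the set of nodes of~\(P'\) whose new cluster contains a given~\(x\in U_i^b\) is exactly the old one intersected with~\(P_i^b\), so \(R_j\) is unchanged for~\(j\in V(P')\); but this is a claim that must be stated and checked, not just alluded to). Once the disconnected case of~\(P_i^b\) is handled correctly — with the extra edge, the correct choice of~\(i_0'\), the tree-decomposition verification, and the elimination of the single wrap in~\(Z\)'s label interval — the rest of your running-time accounting is sound and matches the paper's.
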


Note that the last theorem implies Theorem~\ref{thmDoubleR}. Therefore, the following algorithm will compute a bisection within the bound of Theorem~\ref{thmCutSpecSizes}. First, a heaviest path~\(P_0\) in~\( (T_0,\X_0)\), and the set of \mbox{\(P_0\)-parameters} is computed, which takes~\(\bigO( \|(T_0, \X_0)\| ) \) time, due to Lemma~\ref{lemmaHeaviestPath} and Lemma~\ref{lemmaPLabeling}. As we assumed that~\( (T_0,\X_0)\) is nonredundant,~\(P_0\) is nonredundant. Now, the algorithm in Lemma~\ref{lemmaDoubleRAlgo} is applied to~\(G=G_0\) with~\(T=T_0\), \(P=P_0\), and~\(m = \Bfloor{n_0/2}\). If the computed set~\(B\) contains less than~\(m\) vertices, the algorithm in Lemma~\ref{lemmaDoubleRAlgo} is reapplied to~\(G' := G[Z]\) with the path~\(P'\) and the tree~\(T'\), in the same way as Theorem~\ref{thmDoubleR} was reapplied when proving Theorem~\ref{thmCutSpecSizesExistence}. Using that in each application of Theorem~\ref{thmDoubleR} a different piece of the tree decomposition~\( (T_0,\X_0)\) is used and that~\(n\) shrinks by a factor of at least~\(1/2\) in each round, the desired linear running time can be derived.

\begin{proof}[Proof of Lemma~\ref{lemmaDoubleRAlgo}]
  Let~\(G\), \(n\), \(m\), \((T,\X)\), and~\(P\), be as in Lemma~\ref{lemmaDoubleRAlgo}, let \( \X = (X^i)_{i \in V(T)}\), define~\(V= V(G)\), \(V_T = V(T)\), \(V_P = V(P)\), \(r=w^*(P,\X)\), and let~\(A_L\), \(A_V\), \(A_R\), \(A_P\), and~\(L_P\) be the arrays and the list from the set of \mbox{\(P\)-parameters} for~\(G\). Recall the sets~\(R = \bigcup_{i\in V_P} X^i\) and~\(S = V\setminus R\) from Section~\ref{subsecVertexLabeling}. Unless indicated otherwise, the algorithm follows the construction from Sections~\mbox{\ref{subsecVertexLabeling}-\ref{subsecCase2}}. First, the algorithm checks whether Case~1 of the proof of Theorem~\ref{thmDoubleR} applies, i.e., whether there is a vertex~\(v \in R\) with~\(N_m(v) \in R\). Similarly to the first implementation, this can be done in~\(\bigO(n)\) time, except that the algorithm goes through the array~\(A_V\) to obtain a list of all vertices in~\(G\) and then uses~\(A_R\) to determine a list of vertices in~\(R\), as going through the entire array~\(A_R\), which has length~\(n_0\), might take too long. If Case~1 applies, then the set~\(B\) can be read off the labeling and the algorithm takes~\(\bigO(n)\) time. So from now on, assume that Case~1 does not apply. By Proposition~\ref{propPartitionRiSi}, we have~\(|V_P| \leq n\). Similarly to the first implementation, the algorithm can compute the sizes of the sets~\(|S_i|\), \(|U_i^b|\), \(|H_i^b|\), \(|U_i^f|\), and \(|H_i^f|\) for all~\(i \in V_P\), as well as a node~\(i \in V_P\) with the property of Proposition~\ref{propSpecialNodeForCase2ab} in~\(\bigO(n)\) time. It uses the same trick as above to obtain a list of all vertices in~\(R\) without going through the entire array~\(A_R\). 
  
  Assume the algorithm determined a node~\(i \in V_P\) with the property of Proposition~\ref{propSpecialNodeForCase2ab}a), so it follows the construction described in Section~\ref{subsubsecCase2a}. The case when~\(i\) satisfies Proposition~\ref{propSpecialNodeForCase2ab}b) is analogous. As in the first implementation, the set~\(B_1\) can be read off the labeling in~\(\bigO(n)\) time. To obtain the set~\(B_2\), Lemma~\ref{lemmaApproxCut} is applied to~\(G[S_i]\) with parameters~\( \tilde{m} = m - |B_1|\) and~\(c = 1- \frac{r}{1-r}\). This requires a tree decomposition~\( (\tilde{T},\tilde{\X})\) of~\(G[S_i]\), which can be obtained in a similar way as in the first implementation, i.e., by restricting~\( (T,\X)\) to~\(T\) and~\(G[S_i]\). As~\(v \in S_i\) implies that~\(v \not\in X^i\),  all non-empty clusters of~\( \tilde{\X}\) belong to nodes in~\(V(T_i)\setminus\{i\}\). Hence, it suffices to use~\(\tilde{T} := T_i\) and to compute only the clusters for all~\(h \in V(T_i)\setminus\{i\}\). The tree~\(T_i\) is available from the set of \mbox{\(P\)-parameters}, but the clusters~\(\X\) are not stored. However, the algorithm can use the clusters~\(\X_0\) as the restriction of~\( (T_0,\X_0)\) to~\(\tilde{T}\) and~\(G[S_i]\) is identical to the restriction of~\( (T,\X)\) to~\(\tilde{T}\) and~\(G[S_i]\). So, computing the restriction takes~\( \bigO( \sum_{h \in V(T_i)\setminus\{i\}} |X_0^h|)\) time, as for~\(x\in V(G_0)\) we have~\(x\in V\) if and only if~\(A_L[x] \in [n]\) and~\(A_V[A_L[x]] = x\), and similar techniques as in the first implementation can be applied for checking whether~\(v \in S_i\) for some~\(v\in V\). While doing so, the vertices in~\(\tilde{\X}\) can be renamed to~\(1, \ldots, |S_i|\), which is required for the algorithm in Lemma~\ref{lemmaApproxCut}. The algorithm in Lemma~\ref{lemmaApproxCut} takes \(\bigO( \| (\tilde{T}, \tilde{\X})\| ) = \bigO( \sum_{h \in V(T_i)\setminus\{i\}} |X_0^h| ) \) time and outputs the set~\(B_2\). 
  
  Next, we define the path~\(P'\) and the tree~\(T'\) for the tree decomposition~\( (T',\X')\) of~\(G[Z]\), where~\(Z:= U_i^b \cup H_i^b\). Note that~\(T'\) does not need to be computed, but the algorithm computes the set of \mbox{\(P'\)-parameters}, which depends on~\(T'\). Recall that the ends of~\(P\) were denoted by~\(i_0\) and~\(j_0\), and recall the definition of the set~\(P_i^b\) and the nodes~\(i^b\) and~\(i^b_{\ell}\) from Section~\ref{subsubsecCase2FurtherNotation}. Let~\(H_T\) be the subgraph of~\(T\) that is induced by~\(P_i^b\) and the nodes of~\(T_h\) for all~\(h \in P_i^b\setminus \{i^b\}\). If~\(P_i^b\) induces a connected subgraph in~\(T\), then define~\(T' := H_T\) and~\(i_0' := i^b\). Otherwise~\(P_i^b\) induces two paths in~\(T\), one with ends~\(i^b\) and~\(j_0\) and the other with ends~\(i_0\) and~\(i^b_{\ell}\), and we define~\(T'\) to be the tree obtained from~\(H_T\) by adding the edge~\(\{i^b_{\ell}, i^b\}\) as well as~\(i_0' := i_0\). Furthermore, let~\(P'\) be the path~\(T'[P_i^b]\). Denote by~\( (T',\X')\) the restriction of~\( (T,\X)\) to~\(T'\) and~\(G[Z]\) and note that~\( (T',\X')\) is also the restriction of~\( (T_0,\X_0)\) to~\(T'\) and~\(G[Z]\). In Appendix~\ref{subsecAppLemmaDoubleRAlgo}, we show that~\((T',\X')\) is indeed a tree decomposition of the graph~\(G'\). 
  
  We proceed with the discussion of the relative weight of the path~\(P'\) and the set of \mbox{\(P'\)-parameters} both with respect to~\(\X'\). Consider a \mbox{\(P'\)-labeling} of the vertices in~\(G[Z]\) with respect to the end~\(i_0'\) and note that for all~\(i \in V(P')\) the sets~\(R_i\) will be the same as in the \mbox{\(P\)-labeling}. Therefore,~\(P'\) is nonredundant, \(w(P', \X') = |U_i^b|\), and \(w^*(P',\X') \geq 2r\) because~\eqref{proofDoubleRCase2aH3} implies~\(|Z| \leq |U_i^b|/(2r)\). Before adjusting the set of \mbox{\(P\)-parameters}, a list~\(L_Z\) of the vertices in the set~\(Z\) is computed, which can be done similarly to the first implementation, in~\(\bigO(n)\) time. Recall that the vertices in~\(Z\) are collected in increasing order of their labels, except for possibly one jump from~\(n\) to~\(1\), which can be eliminated. To obtain the new labeling, the algorithm goes through~\(L_Z\) and labels the vertices encountered with~\(1, \ldots, |Z|\), adjusting the arrays~\(A_L\) and~\(A_V\) accordingly, which takes~\( \bigO( |Z|) = \bigO(n)\) time. As the arrays~\(A_R\) and~\(A_P\) refer to the original vertex names used in the graph~\(G_0\), they do not need to be adjusted. To obtain~\(L_{P'}\) from~\(L_{P}\), the algorithm traverses~\(L_Z\), marks all path nodes of the vertices in~\(Z\), and deletes the non-marked nodes from~\(L_P\). As the vertices in~\(Z\) are ordered, this takes \( \bigO (|Z| + |V_P| ) = \bigO(n)\) time. Last but not least, the tree~\(T_{i_0'}\) is modified to consist only of the node~\(i_0'\), which is easy to locate as it is the path-node of the first vertex in~\(L_Z\). 
  
  All in all, each step takes~\(\bigO(n)\) time, except computing the tree decomposition~\( (\tilde{T}, \tilde{\X})\) and the application of Lemma~\ref{lemmaApproxCut}, which each takes~\( \bigO \left( \sum_{h \in V(T_i) \setminus\{i\}} |X_0^h| \right)\) time. As~\(T'\) contains no node of~\(V(T_i)\setminus\{i\}\) due to Proposition~\ref{propCaseIIPart2}d), the desired running time is achieved. 
\end{proof}

\appendix

\section{Omitted Proofs}
\label{secAppendix}

\subsection{Omitted Part in the Proof of Lemma~\ref{lemmaDoubleRAlgo}}
\label{subsecAppLemmaDoubleRAlgo}

In the proof of Lemma~\ref{lemmaDoubleRAlgo}, the pair~\((T',\X')\) was defined. Here we show that~\((T',\X')\) is indeed a tree decomposition of the graph~\(G[Z]\). To show that~\( (T',\X')\) is a tree decomposition of~\(G[Z]\), we use~(T1), (T2), (T3), and~(T3') to refer to the properties of tree decompositions in and after Definition~\ref{defTreeDec}.  As~\(R_j \subseteq X^j\) and~\(S_j \subseteq \bigcup_{h \in V(T_j)} X^h\), the pair~\( (T',\X')\) satisfies~(T1). 

To show that~(T2) is satisfied, let~\(\{x,y\}\) be an arbitrary edge of~\(G[Z]\). If~\(x \not\in R\) then let~\(h_x\) be the path-node of~\(x\) and note that 

  \[I_x \, := \, \left\{h \in V_T : x \in X^h \right\} \, \subseteq \, T_{h_x} \setminus\{h_x\} \, \subseteq \, V(T'),\]

as~\( (T,\X)\) satisfies~(T3'). Since~\( (T,\X)\) satisfies~(T2), there is an~\(h \in I_x\) with~\(y \in X^h\) and hence~(T2) is satisfied for the edge~\(\{x,y\}\). If~\(y\not\in R\), it follows analogously that (T2) is satisfied for the edge~\(\{x,y\}\). So assume that~\(x \in R\) and~\(y \in R\). Denote by~\(h_x\) and~\(h_y\) the path nodes of~\(x\) and~\(y\), respectively, and note that~\(x \in X^{h_x}\) and~\(h_x \in V(P')\) and similarly for~\(y\) and~\(h_y\). If~\(h_x=h_y\) then~(T2) is satisfied for~\(\{x,y\}\). So assume that~\(h_x \neq h_y\) and without loss of generality that~\(h_x\) is encountered before~\(h_y\) when traversing~\(P\) from~\(i_0\) to~\(j_0\). Note that~\(y \not\in X^h\) for all~\(h\) that appear on~\(P\) before~\(h_y\). Therefore,~\(x \in X^{h_y}\) as~\((T,\X)\) satisfies~(T2) and~(T3'), and hence~\((T',\X')\) satisfies~(T2) for~\(\{x,y\}\). 

Clearly, (T3) is satisfied when~\(T' \subseteq T\). So assume that~\(T' \not\subseteq T\). Then, the only edge in~\(T'\) that is not in~\(T\) is the edge~\(e:=\{i^b_{\ell}, i^b\}\). As the unique~\( i^b_{\ell},i^b\)-path in~\(T\) uses no edge in~\(T'\), it follows that~\( (T',\X')\) satisfies~(T3).

\subsection{Implementation of Subroutines}
\label{subsecAppSubroutines}

In what follows, consider a tree decomposition~\((T,\X)\) with~\(T=(V_T,E_T)\) and~\(\X=(X^i)_{i \in V_T}\) of a graph~\(G = (V,E)\) with~\(V=[n]\),  and define~\(n_T := |V_T|\). All procedures receive only~\( (T,\X)\) and are based on a depth-first search in the tree~\(T\).  To describe the adaptations needed in these procedures, we adopt the traditional notation used, for instance, by Cormen et al.~\cite{CLRS09}, of coloring the nodes of the tree white (undiscovered), gray (discovered but not finished), and black (finished) during the traversal, and also the predecessor function, stored in an array~\(\pi\), the discovery time, stored in an array~\(d\), where~\(\pi\) and~\(d\) are both of length~\(n_T\). Furthermore, (T1), (T2), (T3), and~(T3') are used to refer to the properties of tree decompositions in and after Definition~\ref{defTreeDec}.

\subsubsection{Making a Tree Decomposition Nonredundant: Proof of Proposition~\ref{propNonRedundantTD}b)}
\label{subsubsecImplNonRedundantTD}

Here we show that~\( (T,\X)\) can be made nonredundant in~\( \bigO( \|(T,\X)\|)\) time by contracting edges of~\(T\) one by one. More precisely, an edge~\(\{i,j\}\) of~\(T\) is contracted if~\( X^i \subseteq X^j\), and~\(X^j\) is the cluster of the resulting node. If the resulting node is also called~\(j\) and its cluster is~\(X^j\), then we say~\(i\) \emph{is contracted into}~\(j\). Clearly, this satisfies all properties needed for Proposition~\ref{propNonRedundantTD}b).

The algorithm starts a depth-first search at an arbitrary node~\(r\) of~\(T\) and uses an additional array~\(d_G\) of length~\(n\) to store the discovery time of the vertices of~\(G\). Initially all entries of~\(d_G\) are undefined. When~\(i \in V_T\) turns gray, i.e., when it is discovered, the algorithm computes~\(n_i := |X^i|\), and the number~\(c_i\) of vertices in~\(X^i\) which have their discovery time in~\(d_G\) already defined. For~\(i \neq r\) we have~\(c_i = |X^i \cap X^{\pi(i)}|\) due to~(T3'). If~\(i\neq r\) and~\(c_i =n_i\) then~\(X^i \subseteq X^{\pi[i]}\) and the algorithm contracts~\(i\) into~\(\pi[i]\). If~\(i\neq r\), \(c_i \neq n_i\), and \(c_i = n_{\pi[i]}\), then~\(X^{\pi[i]} \subseteq X^i\) and the algorithm contracts~\(\pi[i]\) into~\(i\) and sets~\(d_G[x] = d[\pi[i]]\) for every~\(x\in X^i\) with undefined~\(d_G[x]\). If~\(i=r\) or  none of the above happens, then the algorithm sets~\(d_G[x] = d[i]\) for all~\(x\in X^i\) with~\(d_G[x]\) undefined. In any case the depth-first search proceeds on the modified tree. To contract~\(i\) into~\(\pi[i]\), the algorithm removes~\(\pi[i]\) from the adjacency list of~\(i\) and vice versa. Then it concatenates the adjacency lists of~\(i\) and~\(\pi[i]\) to obtain the new adjacency list of~\(\pi[i]\), updates the corresponding entries of~\(\pi\), exchanges~\(i\) and~\(\pi[i]\) in the adjacency list of each neighbor of~\(i\) (except in the adjacency list of~\(\pi[i]\)), and deletes the node~\(i\) from~\(T\). To contract~\(\pi[i]\) into~\(i\), the algorithm exchanges the links to the clusters of~\(\pi[i]\) and~\(i\), updates~\(n_{\pi[i]}\) to~\(n_i\) as well as~\(d[\pi[i]]\) to~\(d[i]\), and then proceeds as when contracting~\(i\) into~\(\pi[i]\). It is easy to check that the final tree decomposition is a nonredundant tree decomposition of~\(G\).

Next we discuss the running time of the algorithm. The contraction of an edge~\( \{i, \pi[i]\}\) takes time proportional to~\(\deg_T(i)\), when using linked lists, because then it suffices to go through the adjacency list of~\(i\) to find~\(\pi[i]\) and to update all adjacency lists of neighbors of~\(i\) without going through them. Therefore, each original adjacency list is traversed at most once and a modified adjacency list is never traversed. So, in total, all traversals done while contracting an edge take~\(\bigO(n_T)\) time. Also, each cluster of the initial tree decomposition is traversed at most twice when its node is discovered.  Disregarding the time of these traversals, the depth-first search itself is a search of the final decomposition tree, plus constant time per removed node.  So the overall running time is proportional to the size of the input tree decomposition.

\subsubsection{Computing a Heaviest Path: Proof of Lemma~\ref{lemmaHeaviestPath}}
\label{subsubsecImplHeaviestPath}

Here we present an adaptation of a well-known procedure to compute a longest path in a weighted tree~\cite{Handler}. Using a depth-first search in~\(T\) that starts at an arbitrary node~\(r\), the algorithm computes for every~\(i\in V_T\), when~\(i\) turns gray, the value~\(w(r,i) := |\bigcup_{j \in V(Q)} X^j|\), where~\(Q\) is the unique \mbox{\(r\text{,}i\)-path} in~\(T\). For~\(r\), we have~\(w(r,r) = |X^r|\) and, for~\(i \neq r\), we have~\(w(r,i) = w(r,\pi[i]) + |X^i| - c_i\) with~\(c_i = |X^i \cap X^{\pi[i]}|\), due to~(T3'), where~\(c_i\) can be computed as in Section~\ref{subsubsecImplNonRedundantTD}. While doing so, a node~\(s\) such that~\(w(r,s)\) is maximum among all nodes of~\(T\) is recorded. Then, the algorithm does a second depth-first search on~\(T\) that starts at~\(s\) and computes a node~\(t\) such that~\(w(s,t)\) is maximum among all nodes of~\(T\). One can show that the unique \mbox{\(s\text{,}t\)-path} in~\(T\) is a heaviest path in~\(T\) with respect to~\(\X\). The running time of this procedure is~\(\bigO(\|(T,\X)\|)\) as the traversals of~\(T\) take~\(\bigO(n_T)\) time each and the computation of~\(w(r,i)\) and~\(w(s,i)\) for all~\(i\in V_T\) both traverse each cluster in~\(\X\) once.

\subsubsection{Computing a \(P\)-labeling: Proof of Lemma~\ref{lemmaPLabeling}}
\label{subsubsecImplLabeling}

Let~\(P=(V_P, E_P)\subseteq T\) be an arbitrary path and recall the arrays~\(A_L\), \(A_V\), \(A_R\), and \(A_P\), as well as the list~\(L_P\) from Lemma~\ref{lemmaPLabeling}. First, the algorithm initializes all arrays with zeros. Then, it computes the list~\(L_P\) including the trees~\(T_{i}\) for all~\(i\) in~\(L_P\) and sets all entries of~\(A_R\) to one that correspond to vertices in~\(R = \bigcup_{i \in V_P} X^i\), which takes~\(\bigO( \|(T,\X)\|)\) time. Next, the algorithm traverses~\(L_P\) and, for each~\(i\) in~\(L_P\), it traverses the tree~\(T_i\) with a depth-first search, where each node is processed when it turns black.  When processing~\(i\in V_P\), the algorithm labels all unlabeled vertices in~\(X^i\), i.e., fills in the corresponding entries in~\(A_L\) and~\(A_V\), and when processing~\(i \not\in V_P\) it labels all unlabelled vertices in~\(X^i \setminus R\). While doing so, the algorithm also fills in the array~\(A_P\).  The processing time of~\(i\in V_T\) is proportional to~\(X^i\) as the array~\(A_R\) can be used to check whether a vertex~\(v\) is in~\(R\). The overall running time of the procedure is~\(\bigO( \|(T,\X) \|)\).

\subsubsection{Computing an Approximate Cut: Implementation of Algorithm~\ref{algoApproxCut}}
\label{subsubsecImplApproxCut}

In this subsection we present the algorithmic part of the proof of Lemma~\ref{lemmaApproxCut}, i.e., how to implement Algorithm~\ref{algoApproxCut}. In the following, all line numbers refer to Algorithm~\ref{algoApproxCut}. Recall the definition of~\(y_i\) and~\(\tilde{y}_i\) for~\(i\) in~\(V_T\). To compute these values according to~\eqref{proofApproxCutEqYiPart} in Section~\ref{subsecApproxCut}, the algorithm starts a depth-first search in~\(T\) at the node~\(r\) selected in Line~\ref{proofApproxCutPrepro0}. When a node~\(i\) turns gray, the algorithm computes~\(n_i := |X^i|\) and, if~\(i \neq r\), also~\(c_i := |X^i \cap X^{\pi[i]}|\), as done in Section~\ref{subsubsecImplNonRedundantTD}. When a node~\(i\) turns black, the algorithm computes~\(y_i = \sum_{j \text{ child of } i} \tilde{y}_j + n_i\) and, if~\(i \neq r\), also~\(\tilde{y}_i\), which equals~\(y_i - c_i\) by~(T3'). Therefore, Lines~\ref{proofApproxCutPrepro0}-\ref{proofApproxCutPrepro1} take~\( \bigO( \|(T,\X)\|)\) time. 

Before starting the outer while loop, the algorithm sorts the children of each node in order to satisfy the property in Line~\ref{proofApproxCutK}. This is more efficient when all lists are sorted together in one application of Counting Sort~\cite{CLRS09}, which can be applied as~\(\tilde{y}_j \in [n]\) for all~\(j\in V_T\). Each cell carries along a pointer to what list it belongs, so that the algorithm can obtain the separated sorted lists in a way similar to the inverse of the merge step of mergesort. As the number of values to be sorted is at most~\(\sum_{i \in V_T} \deg_T(i) \leq 2n_T\), the sorting takes~\(\bigO(n_T + n) \) time.

This algorithm stores the set~\(B\) as a binary array~\(A_B\), which is initialized with zeros in Line~\ref{proofApproxCutBInitializiation}. This takes~\(\bigO(n)\) time and is possible because of the assumption that~\([n]\) is the vertex set of~\(G\). Every time Line~\ref{proofApproxCutUpdateB} is executed, the set~\(\bigcup_{h=1}^{\ell} \tilde{Y}^{j_h}\) is added to~\(B\), where~\(j_h\) is a child of the current node~\(i\) for~\(h\in[\ell]\). To do so, for each~\(h \in [\ell]\), the algorithm goes through the subtree of~\(T\) rooted at~\(j_h\) and sets the entries of all vertices in the clusters of the nodes discovered there to one, and afterwards, the algorithm sets the values of~\(A_B\) that correspond to vertices in~\(X^i\) back to zero . This works due to~\ref{proofApproxCutInvBY} in Section~\ref{subsecApproxCut}. In Line~\ref{proofApproxCutResetI}, the node~\(i\) is never reset to a descendant of a node~\(j\) for which the set~\(\tilde{Y}^j\) was added to~\(B\), hence each cluster in~\(\X\) is traversed at most once in this process.  Other than in Line~\ref{proofApproxCutUpdateB}, the set~\(B\) is only modified in Line~\ref{proofApproxCutSetZ}, where the set~\(Z\) can be computed greedily by going through the list~\(X^i\). Hence, all modifications on the array~\(A_B\) together take at most~\(\bigO(n + \|(T,\X)\|)\) time, during the entire execution of Algorithm~\ref{algoApproxCut}.

Recall from Section~\ref{subsecApproxCut} that, in the inner while loop, the node~\(i\) is reset to a descendant of one of its children.  Therefore, at most~\(\bigO(n_T)\) time is needed for the inner while loop during the entire algorithm.  Excluding the inner while loop and all modifications on the set~\(B\), the time needed for Lines~\ref{proofApproxCutK}-\ref{proofApproxEndElse} during the entire algorithm is~\(\bigO(n_T)\). Consequently, the overall running time is~\(\bigO(n +n_T + \|(T,\X)\|) = \bigO( \|(T,\X)\|)\) by~(T1). 

\bibliographystyle{abbrv}
\setlength{\itemsep}{-.5ex}
\addcontentsline{toc}{section}{References} 
\bibliography{references}
\addcontentsline{toc}{section}{References} 

\end{document}